\documentclass{article}
\usepackage{amsmath, amsthm, amssymb}
\usepackage{hyperref}
\hypersetup{hidelinks}
\usepackage{geometry}
\usepackage[most]{tcolorbox}
\tcbset{
  keybox/.style={
    enhanced,
    colback=gray!8,
    colframe=gray!50,
    boxrule=0.5pt,
    arc=2pt,
    left=6pt, right=6pt, top=6pt, bottom=6pt,
    fontupper=\small
  }
}

\newtheorem{theorem}{Theorem}[section]
\newtheorem{proposition}[theorem]{Proposition}
\newtheorem{lemma}[theorem]{Lemma}
\newtheorem{corollary}[theorem]{Corollary}
\newtheorem{definition}[theorem]{Definition}
\newtheorem{remark}[theorem]{Remark}

\title{Universal Shuffle Asymptotics, Part II:\\
Non-Gaussian Limits for Shuffle Privacy---Poisson, Skellam,\\
and Compound-Poisson Regimes}
\author{Alex Shvets}
\date{March 2026}

\begin{document}

\maketitle

\begin{abstract}
Part I of this series~\cite{Shv26} develops a sharp Gaussian (LAN/GDP) limit theory for neighboring
shuffle experiments when the local randomizer is fixed and has full support bounded away from
zero. The present paper characterizes the first universality-breaking frontier: critical sequences
of increasingly concentrated local randomizers for which classical Lindeberg conditions fail and
the shuffle score exhibits rare macroscopic jumps.

For shuffled binary randomized response with local privacy $\varepsilon_0 = \varepsilon_0(n)$, we prove experiment-level
convergence (in Le Cam distance) to explicit shift limit experiments: a Poisson-shift limit for
the canonical neighboring pair when $\exp(\varepsilon_0(n))/n \to c^2$, and a Skellam-shift limit for proportional
compositions $k/n \to \pi \in (0,1)$ in the same scaling, including an explicit disappearance of the
two-sided $\delta$-floor away from boundary compositions.

For general finite alphabets, we introduce a sparse-error critical regime and prove a multivariate
compound-Poisson / independent Poisson vector limit for the centered released histogram, yielding
a multivariate Poisson-shift experiment and an explicit limiting $(\varepsilon, \delta)$ curve as a multivariate
Poisson series. Together with Part~I, these results yield a three-regime picture
(Gaussian/GDP, critical Poisson/Skellam/compound-Poisson, and super-critical no privacy)
under convergent macroscopic scalings.
\end{abstract}

\noindent\textbf{MSC 2020:} 62B15 (statistical experiments and information); 68P27 (privacy); 60F05 (central limit and other weak theorems); 60E07 (infinitely divisible distributions).\\[2pt]
\noindent\textbf{Keywords:} shuffle model, differential privacy, Le Cam distance, Poisson approximation, Skellam distribution, compound Poisson, privacy amplification.

\tableofcontents

\section{Introduction}

The shuffle model can yield strong amplification of local differential privacy (LDP) by anonymizing
the multiset of local messages~\cite{EFM19,CSU19,BBG19}. Part I~\cite{Shv26} gives a sharp Gaussian characterization---including LAN/GDP equivalence in the sense of Gaussian differential privacy~\cite{DRS22}
and explicit privacy curves---when the local randomizer is fixed and has full support bounded away from 0.
Independently, Takagi and Liew~\cite{TL26} develop an asymptotic blanket-divergence framework for shuffle
privacy beyond pure LDP; the present paper is complementary in focusing on critical non-Gaussian limit
experiments.\footnote{Part~I of this series~\cite{Shv26} was submitted to arXiv on 17~January~2026
(v1, as recorded in the arXiv submission history). The identifier \texttt{2602.09029} reflects the
processing date rather than the submission date. Takagi--Liew~\cite{TL26} appeared on 27~January~2026.}

In practice, however, one often lets the local privacy level $\varepsilon_0 = \varepsilon_0(n)$ grow with the population
size $n$ (e.g.\ to reduce estimator variance). In such settings the shuffle score may fail to be a sum of
``small'' increments: extremely unlikely local outcomes can produce $\Theta(1)$ jumps in the log-likelihood
ratio at the population scale. The correct limit ceases to be Gaussian and becomes Poisson or
compound-Poisson.

This paper develops a sharp non-Gaussian limit theory at the critical threshold, at the level of binary
experiments and Le Cam convergence, with explicit total-variation rates and explicit limiting privacy curves.
Together with Part~I, it delineates how the critical and super-critical fronts sit around the Gaussian regime
whenever the relevant macroscopic composition and scaling parameters converge.

The main results are Theorems~3.1, 4.1, and~5.8 (Poisson-shift, Skellam-shift, and multivariate compound-Poisson limits, each with explicit $O(n^{-1})$ Le Cam rates and quantitative privacy curve convergence) together with the phase-diagram synthesis of Section~6 under convergent macroscopic scalings. Proposition~5.4 in Section~5.2 additionally develops a hybrid Gaussian/compound-Poisson weak limit for the two-dominant regime, characterizing the first L\'evy--Khintchine layer; Appendix~\ref{app:hybrid-privacy} closes the resulting privacy-curve convergence for interior compositions $\pi\in(0,1)$, while boundary compositions reduce to the Poisson-shift case of Section~3.

\section{Model and preliminaries}

\subsection{Shuffle mechanism and transcript laws}

Fix a population size $n \geq 1$. Each user $i \in \{1, \ldots, n\}$ holds a private datum $x_i \in \{0, 1\}$. A (possibly
$n$-dependent) local randomizer is a Markov kernel
\[
W^{(n)}: \{0, 1\} \to \Delta(\mathcal{Y}), \quad y \mapsto W^{(n)}_x(y),
\]
where $\mathcal{Y}$ is a finite output alphabet and $\Delta(\mathcal{Y})$ denotes the simplex of probability measures on $\mathcal{Y}$.
Given an input dataset $x^n = (x_1, \ldots, x_n)$, users apply $W^{(n)}$ independently to produce messages
$Y_i \sim W^{(n)}_{x_i}$.

The shuffle mechanism outputs the multiset of messages, equivalently the histogram
\[
N(y) := \sum_{i=1}^n \mathbf{1}\{Y_i = y\} \in \mathbb{Z}^{\mathcal{Y}}, \qquad \sum_{y \in \mathcal{Y}} N(y) = n.
\]

For $k \in \{0, \ldots, n\}$, let $T_{n,k}$ denote the shuffle transcript law when the dataset has exactly $k$ ones and
$n - k$ zeros. Since the shuffle output is permutation-invariant, $T_{n,k}$ depends on $x^n$ only through $k$.

\subsection{Privacy loss, privacy curves, and trade-off}

Consider two neighboring datasets differing in one entry, i.e.\ $k$ versus $k+1$ ones. The corresponding
neighboring shuffle experiment is the binary experiment
\[
\mathcal{E}_{n,k} := (P_{n,k}, Q_{n,k}), \quad P_{n,k} := T_{n,k}, \quad Q_{n,k} := T_{n,k+1}.
\]

Whenever $Q_{n,k} \ll P_{n,k}$, define the likelihood ratio and privacy loss random variables
\[
L_{n,k} := \frac{dQ_{n,k}}{dP_{n,k}}, \qquad \Lambda_{n,k} := \log L_{n,k}.
\]

\textbf{One-sided privacy curve.} For $\varepsilon \geq 0$, the tight one-sided privacy curve (privacy profile) is
\begin{equation}
\delta_{Q\|P}(\varepsilon) := \sup_A \{Q(A) - e^\varepsilon P(A)\}, \label{eq:delta}
\end{equation}
where the supremum is over measurable sets $A$.

\begin{lemma}[Neyman--Pearson identities for the privacy curve]
Assume $Q \ll P$ and let $L = dQ/dP$. Then for every $\varepsilon \geq 0$,
\begin{equation}
\delta_{Q\|P}(\varepsilon) = E_P[(L - e^\varepsilon)_+], \quad (u)_+ := \max\{u, 0\}. \label{eq:NP1}
\end{equation}
If the sample space is countable with pmfs $p$ and $q$, then for every $\varepsilon \geq 0$,
\begin{equation}
\delta_{Q\|P}(\varepsilon) = \sum_x \bigl(q(x) - e^\varepsilon p(x)\bigr)_+. \label{eq:NP2}
\end{equation}
The first identity requires $Q \ll P$; the second does not.
\end{lemma}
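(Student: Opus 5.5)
The plan is to prove both identities by the standard Neyman--Pearson argument: for a fixed $\varepsilon$, the functional $A \mapsto Q(A) - e^\varepsilon P(A)$ is a signed measure, and its supremum is attained on the set where its density is positive.

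For \eqref{eq:NP1}, assume $Q \ll P$ with $L = dQ/dP$. For any measurable $A$,
\[
Q(A) - e^\varepsilon P(A) = E_P\bigl[(L - e^\varepsilon)\mathbf{1}_A\bigr].
\]
The key pointwise bound is $(L - e^\varepsilon)\mathbf{1}_A \le (L - e^\varepsilon)_+$, which holds everywhere. Taking expectations gives
\[
Q(A) - e^\varepsilon P(A) \le E_P[(L-e^\varepsilon)_+]
\]
for every $A$. For the reverse inequality, choose $A^\ast := \{L > e^\varepsilon\}$, which is measurable because $L$ is. On $A^\ast$ the integrand equals $(L-e^\varepsilon)_+$, and off $A^\ast$ both sides vanish, so the bound is attained. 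This proves \eqref{eq:NP1}. The expectation is finite since $0 \le (L - e^\varepsilon)_+ \le L$ and $E_P L = Q(\Omega) = 1$.

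For \eqref{eq:NP2}, I would not use a likelihood ratio at all, because absolute continuity may fail. Take the countable sample space with the power set as $\sigma$-algebra. For any $A$,
\[
Q(A) - e^\varepsilon P(A) = \sum_{x\in A}\bigl(q(x) - e^\varepsilon p(x)\bigr).
\]
This series converges absolutely, since $\sum_x q(x) + e^\varepsilon \sum_x p(x) < \infty$. Termwise it is bounded above by $\sum_x (q(x) - e^\varepsilon p(x))_+$. The bound is attained by $A^\ast = \{x : q(x) > e^\varepsilon p(x)\}$, and this set includes any atoms with $p(x)=0<q(x)$. This is why no domination hypothesis is needed.

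As a consistency check, when $Q \ll P$ one has $L(x) = q(x)/p(x)$ on $\{p>0\}$ and $q=0$ on $\{p=0\}$, so the two formulas agree.

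I do not expect a real obstacle. The only points needing care are the following:
\begin{itemize}
\item measurability of $A^\ast$;
\item finiteness and absolute convergence, so that rearranging the sums is legitimate;
\item in the countable case, treating the set $\{p=0, q>0\}$ explicitly, which is exactly where \eqref{eq:NP1} would break down without $Q \ll P$.
\end{itemize}
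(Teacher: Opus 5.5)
Your proposal is correct and follows essentially the same route as the paper. In both, you write $Q(A)-e^\varepsilon P(A)$ as the integral (resp.\ sum) of $L-e^\varepsilon$ (resp.\ $q-e^\varepsilon p$) over $A$, bound the integrand pointwise by its positive part, and attain the bound at $A^\ast=\{L>e^\varepsilon\}$ (resp.\ $\{q>e^\varepsilon p\}$); your remarks on integrability, absolute convergence, and the atoms with $p(x)=0<q(x)$ just make explicit details the paper leaves implicit.
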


\begin{proof}
Under $Q \ll P$, for any measurable $A$ we may write
\[
Q(A) - e^\varepsilon P(A) = \int_A (L - e^\varepsilon)\, dP.
\]
The integrand is pointwise maximized by taking $A^* = \{L > e^\varepsilon\}$, hence
\[
\sup_A \int_A (L - e^\varepsilon)\, dP = \int (L - e^\varepsilon)_+\, dP,
\]
which is \eqref{eq:NP1}. On a countable space, for any $A \subseteq \Omega$,
\[
Q(A) - e^\varepsilon P(A) = \sum_{x \in A} \bigl(q(x) - e^\varepsilon p(x)\bigr),
\]
and maximizing termwise gives \eqref{eq:NP2}, attained at $A^* = \{x: q(x) > e^\varepsilon p(x)\}$.
\end{proof}

The two-sided DP curve is
\[
\delta_{\mathrm{two}}(\varepsilon) := \max\{\delta_{Q\|P}(\varepsilon), \delta_{P\|Q}(\varepsilon)\}.
\]

\textbf{Trade-off function.} A complementary view is via the trade-off function (Neyman--Pearson curve)
\[
f_{P,Q}(\alpha) := \inf_{\varphi:\, E_P[\varphi] \leq \alpha} E_Q[1 - \varphi], \quad \alpha \in [0,1],
\]
i.e.\ the minimal type-II error as a function of type-I error.

\subsection{Total variation and Le Cam distance}

For probability measures $\mu, \nu$ on the same measurable space, the total variation distance is
\[
\mathrm{TV}(\mu, \nu) := \sup_A |\mu(A) - \nu(A)| = \tfrac{1}{2}\|\mu - \nu\|_1.
\]

\begin{lemma}[Coupling bound for total variation]
If $(X, Y)$ is any coupling of two random variables with laws $\mu$ and $\nu$, then
\[
\mathrm{TV}(\mu, \nu) \leq P(X \neq Y).
\]
\end{lemma}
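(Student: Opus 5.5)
The plan is to place both random variables on a single probability space $(\Omega,\mathcal{F},\mathbb{P})$ and compare the probabilities that $X$ and $Y$ assign to a fixed measurable set. The key observation is that the two laws can only differ on the part of $\Omega$ where the coupling fails, so their difference on any set is controlled by the mass of $\{X\neq Y\}$.

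Fix a measurable set $A$. Since $\mu(A)=\mathbb{P}(X\in A)$ and $\nu(A)=\mathbb{P}(Y\in A)$, split each event according to whether $X=Y$ or $X\neq Y$. On $\{X=Y\}$ the events $\{X\in A\}$ and $\{Y\in A\}$ coincide. Hence
\[
\mu(A)-\nu(A)=\mathbb{P}(X\in A,\,X\neq Y)-\mathbb{P}(Y\in A,\,X\neq Y).
\]
This difference lies between $-\mathbb{P}(X\neq Y)$ and $\mathbb{P}(X\neq Y)$, so
\[
|\mu(A)-\nu(A)|\le \mathbb{P}(X\neq Y).
\]
The bound does not depend on $A$. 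Taking the supremum over measurable $A$ and using the definition $\mathrm{TV}(\mu,\nu)=\sup_A|\mu(A)-\nu(A)|$ gives the claim.

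The only technical point is that $\{X\neq Y\}$ must be measurable. This holds automatically on the finite or countable state spaces used throughout the paper. More generally it holds whenever the diagonal of the state space is measurable, for example on any standard Borel space. Beyond this check there is no real obstacle.
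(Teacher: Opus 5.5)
Your proof is correct and is essentially the paper's argument: both write $\mu(A)-\nu(A)=P(X\in A,\,X\neq Y)-P(Y\in A,\,X\neq Y)$, bound its absolute value by $P(X\neq Y)$, and take the supremum over $A$. Your remark on the measurability of $\{X\neq Y\}$ is a harmless refinement that the paper leaves implicit.
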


\begin{proof}
For any event $A$,
\[
\mu(A) - \nu(A) = P(X \in A) - P(Y \in A) = P(X \in A,\, X \neq Y) - P(Y \in A,\, X \neq Y),
\]
so $|\mu(A) - \nu(A)| \leq P(X \neq Y)$. Take the supremum over $A$.
\end{proof}

\begin{lemma}[Contraction of total variation under measurable maps]
\label{lem:tv-contraction}
Let $\mu, \nu$ be probability measures on $(\mathcal{X}, \mathcal{F})$ and let $f: \mathcal{X} \to \mathcal{Z}$ be measurable. Then
\[
\mathrm{TV}(\mu \circ f^{-1}, \nu \circ f^{-1}) \leq \mathrm{TV}(\mu, \nu).
\]
\end{lemma}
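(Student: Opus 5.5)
The argument goes directly through preimages, using only the definition of total variation as a supremum over measurable sets. First, for every measurable $B \subseteq \mathcal{Z}$ the set $f^{-1}(B)$ is measurable in $\mathcal{F}$, because $f$ is measurable. Second, by the definition of the pushforward measure,
\[
(\mu \circ f^{-1})(B) - (\nu \circ f^{-1})(B) = \mu\bigl(f^{-1}(B)\bigr) - \nu\bigl(f^{-1}(B)\bigr).
\]
The absolute value of the right-hand side is at most $\sup_{A \in \mathcal{F}} |\mu(A) - \nu(A)| = \mathrm{TV}(\mu,\nu)$, since $f^{-1}(B)$ is one particular admissible set $A$. Taking the supremum over $B$ on the left then gives the claim. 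Put differently, the family of sets $\{f^{-1}(B)\}$ is a subfamily of $\mathcal{F}$, and a supremum over a subfamily cannot exceed the supremum over the whole family.

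One could instead argue via the coupling lemma above. Take a maximal coupling $(X,Y)$ of $\mu$ and $\nu$ with $P(X\neq Y)=\mathrm{TV}(\mu,\nu)$, and apply the coupling bound to $(f(X),f(Y))$, noting that $\{f(X)\neq f(Y)\}\subseteq\{X\neq Y\}$. That route, however, needs the existence of a maximal coupling on a general measurable space. The preimage argument avoids this, so I would use it.

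There is no real obstacle. The only point to check is that the supremum form of $\mathrm{TV}$ is taken over the respective $\sigma$-algebras, so that the pushforward's $\sigma$-algebra on $\mathcal{Z}$ pulls back into $\mathcal{F}$; this is exactly what measurability of $f$ guarantees.
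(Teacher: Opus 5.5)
Your proof is correct and is essentially the paper's own argument. The paper likewise writes the pushforward difference on $B$ as $\mu(f^{-1}(B))-\nu(f^{-1}(B))$ and observes that the supremum over $B$ is a supremum over a subfamily of measurable sets in $\mathcal{X}$; your explicit check that $f^{-1}(B)\in\mathcal{F}$ just spells out what the paper leaves implicit.
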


\begin{proof}
For any measurable $B \subseteq \mathcal{Z}$,
\[
(\mu \circ f^{-1})(B) - (\nu \circ f^{-1})(B) = \mu(f^{-1}(B)) - \nu(f^{-1}(B)),
\]
and taking the supremum over $B$ yields a supremum over a subset of measurable sets in $\mathcal{X}$.
\end{proof}

\begin{lemma}[Tensorization / union bound for independent pairs]
\label{lem:tv-tensorization}
Let $(X_1, Y_1)$ and $(X_2, Y_2)$ be couplings such that $X_1 \perp X_2$, $Y_1 \perp Y_2$, and the pair couplings
are independent across indices. Then
\[
\mathrm{TV}(\mathcal{L}(X_1, X_2), \mathcal{L}(Y_1, Y_2)) \leq P(X_1 \neq Y_1) + P(X_2 \neq Y_2).
\]
In particular, if $\mu_i, \nu_i$ are measures on $\mathcal{X}_i$ and $\mu = \mu_1 \otimes \mu_2$, $\nu = \nu_1 \otimes \nu_2$, then
\[
\mathrm{TV}(\mu, \nu) \leq \mathrm{TV}(\mu_1, \nu_1) + \mathrm{TV}(\mu_2, \nu_2).
\]
\end{lemma}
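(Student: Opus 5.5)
The plan is to build the joint coupling directly from the two given pair couplings and then finish with the coupling bound for total variation proved above. Place the pairs $(X_1,Y_1)$ and $(X_2,Y_2)$ on a common probability space so that the two pairs are independent of each other; this is exactly the hypothesis that the pair couplings are independent across indices. Then $(X_1,X_2)$ has law $\mathcal{L}(X_1)\otimes\mathcal{L}(X_2)=\mathcal{L}(X_1,X_2)$, because $X_1\perp X_2$. Likewise $(Y_1,Y_2)$ has the law $\mathcal{L}(Y_1,Y_2)$. So $((X_1,X_2),(Y_1,Y_2))$ is a coupling of the two joint laws.

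By the coupling lemma,
\[
\mathrm{TV}(\mathcal{L}(X_1,X_2),\mathcal{L}(Y_1,Y_2)) \le P\bigl((X_1,X_2)\neq (Y_1,Y_2)\bigr).
\]
The event $\{(X_1,X_2)\neq(Y_1,Y_2)\}$ is the union $\{X_1\neq Y_1\}\cup\{X_2\neq Y_2\}$. The union bound then gives the first inequality. Independence is used only to identify the marginal laws of the joint coupling, not in the union bound itself.

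For the product-measure statement, I would take optimal (maximal) couplings. On each $\mathcal{X}_i$ there is a coupling $(X_i,Y_i)$ of $\mu_i,\nu_i$ with $P(X_i\neq Y_i)=\mathrm{TV}(\mu_i,\nu_i)$. This is the standard maximal coupling, constructed from the overlap measure $\mu_i\wedge\nu_i$ together with the normalized residuals $(\mu_i-\nu_i)_+$ and $(\nu_i-\mu_i)_+$. Taking these two couplings independently and applying the first part yields $\mathrm{TV}(\mu,\nu)\le \mathrm{TV}(\mu_1,\nu_1)+\mathrm{TV}(\mu_2,\nu_2)$.

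The main point needing care is the existence of a maximal coupling on a general measurable space. Its construction requires the diagonal $\{x=y\}$ to be measurable in the product $\sigma$-algebra, which holds on standard Borel spaces, in particular on the finite or countable spaces used in this paper.

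If one prefers to avoid couplings, I would argue directly instead. Write $\mu-\nu=(\mu_1-\nu_1)\otimes\mu_2+\nu_1\otimes(\mu_2-\nu_2)$. Taking $L^1$ norms, $\|\mu-\nu\|_1\le\|\mu_1-\nu_1\|_1+\|\mu_2-\nu_2\|_1$, since $\mu_2$ and $\nu_1$ are probability measures. Halving gives the claim for arbitrary measurable spaces.
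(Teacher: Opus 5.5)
Your proof is correct and is essentially the paper's argument: you take independent pair couplings to get a coupling of the joint laws, apply the union bound with Lemma~2.2 for the first inequality, and use TV-attaining (maximal) couplings for the product-measure bound. Your remarks on when maximal couplings exist, and your coupling-free decomposition $\mu-\nu=(\mu_1-\nu_1)\otimes\mu_2+\nu_1\otimes(\mu_2-\nu_2)$, are valid refinements that make the product inequality hold on arbitrary measurable spaces, a point the paper leaves implicit.
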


\begin{proof}
Let $(X_1, Y_1)$ and $(X_2, Y_2)$ be couplings realizing the mismatch probabilities. By independence
across indices, $(X_1, X_2)$ is a coupling of $\mu$ and $(Y_1, Y_2)$ is a coupling of $\nu$, and
\[
P\bigl((X_1, X_2) \neq (Y_1, Y_2)\bigr) \leq P(X_1 \neq Y_1) + P(X_2 \neq Y_2)
\]
by the union bound. Apply Lemma~2.2. The product-measure inequality follows by taking couplings
attaining the TV bounds.
\end{proof}

\textbf{Le Cam distance.} Following Le~Cam~\cite{LeC86}, we will use the following elementary bound, sufficient for the present paper.

\begin{lemma}[Le Cam distance bounded by total variation on the same space]
\label{lem:lecam-tv-bound}
Let $(P, Q)$ and $(P', Q')$ be two binary experiments on the same measurable space. Then
\[
\Delta\bigl((P, Q), (P', Q')\bigr) \leq \max\{\mathrm{TV}(P, P'), \mathrm{TV}(Q, Q')\}.
\]
\end{lemma}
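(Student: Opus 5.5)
The plan is to use Le Cam's definition of the deficiency and take the identity map as the randomization. Recall that the Le Cam distance is
\[
\Delta(\mathcal{E},\mathcal{F}) = \max\{\delta(\mathcal{E},\mathcal{F}),\delta(\mathcal{F},\mathcal{E})\},
\]
where the deficiency is
\[
\delta(\mathcal{E},\mathcal{F}) = \inf_K \max_{\theta\in\{0,1\}} \mathrm{TV}(K\mathcal{E}_\theta, \mathcal{F}_\theta).
\]
Here $K$ ranges over Markov kernels from the sample space of $\mathcal{E}$ to that of $\mathcal{F}$. Conventions sometimes use the full $\|\cdot\|_1$ norm or a decision-theoretic formulation with losses bounded by $1$. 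I will fix the convention in which the bound reads exactly as stated. In the decision-theoretic version, risks of bounded loss functions differ by at most the total variation of the underlying laws, which gives the same statement.

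The first step bounds $\delta\bigl((P,Q),(P',Q')\bigr)$ from above. Since both experiments live on the same measurable space, the identity kernel $K=\mathrm{id}$ is admissible. With this choice $KP = P$ and $KQ = Q$, so the infimum is at most $\max\{\mathrm{TV}(P,P'),\mathrm{TV}(Q,Q')\}$. The reverse deficiency $\delta\bigl((P',Q'),(P,Q)\bigr)$ is handled symmetrically with the same identity kernel, because total variation is symmetric. Taking the maximum of the two deficiencies then yields the claim.

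For the decision-theoretic formulation, I would argue directly. Take any decision rule $\rho$ (a kernel into an action space) and any loss $\ell_\theta$ with values in $[0,1]$. Then
\[
\Bigl|\int\!\!\int \ell_\theta\, d\rho\, dP_\theta - \int\!\!\int \ell_\theta\, d\rho\, dP'_\theta\Bigr| \le \mathrm{TV}(P_\theta,P'_\theta).
\]
This holds because $x\mapsto \int \ell_\theta\, d\rho(\cdot\mid x)$ is a measurable function with values in $[0,1]$, and
\[
\sup_{0\le g\le 1}\Bigl|\int g\, d(\mu-\nu)\Bigr| = \mathrm{TV}(\mu,\nu).
\]
That last identity follows by taking $g$ to be the indicator of the set where $\mu$ exceeds $\nu$, in the Hahn decomposition. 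So every risk function achievable in one experiment is matched, up to $\max_\theta \mathrm{TV}$, by the same rule used in the other experiment.

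There is no genuine obstacle here. The only point needing care is normalization: whether $\Delta$ is defined with $\mathrm{TV}$ or with $\|\cdot\|_1$, and whether losses are normalized to $[0,1]$. I would state the convention explicitly so that the constant is $1$.
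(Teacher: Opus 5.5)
Your proposal is correct and follows the paper's proof: because the two experiments share a sample space, the identity kernel is admissible in each direction, so each deficiency is at most $\max\{\mathrm{TV}(P,P'),\mathrm{TV}(Q,Q')\}$, and taking the maximum of the two deficiencies gives the bound on $\Delta$. Your decision-theoretic check via $\sup_{0\le g\le 1}|\int g\,d(\mu-\nu)| = \mathrm{TV}(\mu,\nu)$ and your explicit choice of the $\mathrm{TV}$ normalization rather than $\|\cdot\|_1$ are useful additions; the paper leaves both implicit.
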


\begin{proof}
By definition, the Le Cam distance is the maximum of the two deficiencies. On the same space,
choosing the identity Markov kernel shows each deficiency is bounded by the corresponding total
variation distance, and the claim follows by taking the maximum.
\end{proof}

\begin{lemma}[Privacy curve stability under TV convergence]
\label{lem:privacy-curve-stability}
Let $(P_n, Q_n)$ and $(P, Q)$ be binary experiments. For every $\varepsilon \geq 0$,
\[
\bigl|\delta_{Q_n\|P_n}(\varepsilon) - \delta_{Q\|P}(\varepsilon)\bigr|
\leq \mathrm{TV}(Q_n, Q) + e^\varepsilon\,\mathrm{TV}(P_n, P).
\]
In particular, if $\mathrm{TV}(P_n, P) \to 0$ and $\mathrm{TV}(Q_n, Q) \to 0$, then
$\delta_{Q_n\|P_n}(\varepsilon) \to \delta_{Q\|P}(\varepsilon)$ for every fixed $\varepsilon \geq 0$.
\end{lemma}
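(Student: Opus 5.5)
The approach is to work directly with the variational definition \eqref{eq:delta}, which makes no use of likelihood ratios and so needs no absolute-continuity assumptions. For a measurable set $A$, write
\[
Q_n(A) - e^\varepsilon P_n(A) = \bigl(Q(A) - e^\varepsilon P(A)\bigr) + \bigl(Q_n(A) - Q(A)\bigr) - e^\varepsilon\bigl(P_n(A) - P(A)\bigr).
\]
By the definition of total variation as a supremum over events, $|Q_n(A) - Q(A)| \le \mathrm{TV}(Q_n, Q)$ and $|P_n(A) - P(A)| \le \mathrm{TV}(P_n, P)$ for every $A$. Hence
\[
Q_n(A) - e^\varepsilon P_n(A) \le Q(A) - e^\varepsilon P(A) + \mathrm{TV}(Q_n,Q) + e^\varepsilon \mathrm{TV}(P_n,P).
\]
Taking the supremum over $A$ on both sides gives $\delta_{Q_n\|P_n}(\varepsilon) \le \delta_{Q\|P}(\varepsilon) + \mathrm{TV}(Q_n,Q) + e^\varepsilon\mathrm{TV}(P_n,P)$.

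The reverse inequality follows by the same argument with the roles of $(P_n,Q_n)$ and $(P,Q)$ exchanged. Total variation is symmetric, so the error term is unchanged, and the two one-sided bounds together give the stated absolute-value bound. One implicit point should be stated explicitly: the two experiments live on a common measurable space, since otherwise $\mathrm{TV}(P_n,P)$ is not defined. This matches the paper's usage, where limits are taken on a common lattice such as $\mathbb{Z}^{\mathcal{Y}}$.

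The ``in particular'' clause is immediate. For fixed $\varepsilon$ the factor $e^\varepsilon$ is a constant, so the right-hand side tends to $0$ whenever both total variation distances do.

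There is no real obstacle. The only care needed is to avoid the Neyman--Pearson form \eqref{eq:NP1}, since it requires $Q\ll P$ and, in the limiting Poisson-shift experiments, $Q$ need not be absolutely continuous with respect to $P$. Working with the supremum over sets avoids this. Because suprema are taken only over a common family of events, it also does not matter whether the suprema are attained.
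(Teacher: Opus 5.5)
Your proposal is correct and matches the paper's proof: both bound $Q_n(A)-e^\varepsilon P_n(A)$ setwise through the variational definition \eqref{eq:delta}, take the supremum over $A$, and get the reverse inequality by symmetry. Your added remarks are accurate: a common sample space is implicit, and the set-based formula avoids the $Q\ll P$ requirement of \eqref{eq:NP1}. They clarify the argument without changing it.
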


\begin{proof}
Using the Neyman--Pearson variational formula \eqref{eq:delta}, for any measurable set $A$,
\begin{align*}
Q_n(A) - e^\varepsilon P_n(A)
&\leq Q(A) + \mathrm{TV}(Q_n,Q) - e^\varepsilon P(A) + e^\varepsilon\mathrm{TV}(P_n,P)\\
&\leq \delta_{Q\|P}(\varepsilon) + \mathrm{TV}(Q_n,Q) + e^\varepsilon\mathrm{TV}(P_n,P).
\end{align*}
Taking the supremum over $A$ gives $\delta_{Q_n\|P_n}(\varepsilon) \leq \delta_{Q\|P}(\varepsilon) + \mathrm{TV}(Q_n,Q) + e^\varepsilon\mathrm{TV}(P_n,P)$.
The reverse inequality is symmetric.
\end{proof}

\subsection{Binary randomized response and the scaling parameter}

In Sections~3--4 we specialize to binary randomized response (RR) with an $n$-dependent local privacy
level $\varepsilon_0 = \varepsilon_0(n) > 0$. The local channel $W^{(n)}$ is
\[
W^{(n)}_0(1) = \delta_n, \quad W^{(n)}_1(1) = 1 - \delta_n, \quad \delta_n := \frac{1}{1 + e^{\varepsilon_0(n)}}.
\]
Equivalently, the local likelihood ratios are $w(1) = e^{\varepsilon_0(n)}$ and $w(0) = e^{-\varepsilon_0(n)}$. A key scaling
parameter is
\begin{equation}
a_n := \frac{e^{\varepsilon_0(n)}}{n}. \label{eq:an}
\end{equation}
The sub-critical / critical / super-critical regimes correspond to $a_n \to 0$, $a_n \to c^2 \in (0, \infty)$, and
$a_n \to \infty$, respectively.

\section{Canonical neighboring pair: Poisson-shift limit}

We first treat the canonical neighboring datasets: all zeros versus one one, under shuffled RR with
$n$-dependent $\varepsilon_0(n)$.

\subsection{Exact one-dimensional reduction}

Let $K_n$ be the released count of output-1 messages. Under the canonical null (all zeros),
\begin{equation}
K_n \sim \mathrm{Bin}(n, \delta_n). \label{eq:Kn_null}
\end{equation}
Under the canonical alternative (one one),
\begin{equation}
K_n \sim \mathrm{Bin}(n-1, \delta_n) + \mathrm{Bern}(1 - \delta_n), \label{eq:Kn_alt}
\end{equation}
with independence between the two terms.

The shuffle likelihood ratio depends on the transcript only through $K_n$ and admits the explicit
affine form
\begin{equation}
L_n(K_n) = \frac{1}{n}\Bigl[(n - K_n)e^{-\varepsilon_0(n)} + K_n e^{\varepsilon_0(n)}\Bigr]
= e^{-\varepsilon_0(n)} + \frac{e^{\varepsilon_0(n)} - e^{-\varepsilon_0(n)}}{n} K_n. \label{eq:LR}
\end{equation}

\textit{Derivation of \eqref{eq:LR}.} Condition on the histogram (equivalently on $K_n$). Under the null, all $n$ users
draw i.i.d.\ from $W^{(n)}_0$; under the alternative, one user draws from $W^{(n)}_1$ and the remaining $n-1$
users draw from $W^{(n)}_0$. Given the multiset, the identity of the ``special'' message is uniformly random
among the $n$ messages. Therefore the likelihood ratio is the average, over the $n$ positions, of the
local likelihood ratio $W^{(n)}_1(y)/W^{(n)}_0(y)$ evaluated at the message in that position. For RR, this ratio
equals $e^{\varepsilon_0(n)}$ on output 1 and $e^{-\varepsilon_0(n)}$ on output 0. Among the $n$ messages, $K_n$ are equal to 1 and
$n - K_n$ are equal to 0, yielding \eqref{eq:LR}.

\subsection{Critical scaling and Poisson approximation}

Assume the critical regime
\begin{equation}
a_n = \frac{e^{\varepsilon_0(n)}}{n} \to c^2 \in (0, \infty). \label{eq:critical}
\end{equation}
Then $\delta_n = (1 + e^{\varepsilon_0(n)})^{-1} \asymp n^{-1}$ and the binomial counts in \eqref{eq:Kn_null}--\eqref{eq:Kn_alt}
have $O(1)$ means. The correct limit is Poisson.

We will use two explicit approximation lemmas, proved in Appendix~A: a binomial-to-Poisson
total-variation bound (Lemma~A.1) and a Poisson parameter perturbation bound (Lemma~A.2).

\begin{theorem}[Poisson-shift limit experiment]
\label{thm:poisson-shift}
Assume the critical regime \eqref{eq:critical}. Let $P_n$ be the law of $K_n$ under \eqref{eq:Kn_null} and $Q_n$ be the
law of $K_n$ under \eqref{eq:Kn_alt}. Let $\lambda := c^{-2}$ and define the Poisson-shift limit experiment
\[
\mathcal{E}^{\mathrm{Poi}}_\infty := (P_\infty, Q_\infty) := \bigl(\mathrm{Poi}(\lambda),\; 1 + \mathrm{Poi}(\lambda)\bigr).
\]
Then $\Delta\bigl((P_n, Q_n), (P_\infty, Q_\infty)\bigr) \to 0$.

Moreover, letting $\lambda_n := n\delta_n$ and $\lambda_{n-1} := (n-1)\delta_n$, we have the explicit bounds
\begin{align}
\mathrm{TV}(P_n, P_\infty) &\leq n\delta_n(1 - e^{-\delta_n}) + |\lambda_n - \lambda| \leq n\delta_n^2 + |\lambda_n - \lambda|, \label{eq:TVPn}\\
\mathrm{TV}(Q_n, Q_\infty) &\leq \delta_n + (n-1)\delta_n(1 - e^{-\delta_n}) + |\lambda_{n-1} - \lambda| \leq \delta_n + (n-1)\delta_n^2 + |\lambda_{n-1} - \lambda|. \label{eq:TVQn}
\end{align}
Consequently,
\begin{equation}
\Delta\bigl((P_n, Q_n), (P_\infty, Q_\infty)\bigr) \leq \max\{\mathrm{TV}(P_n, P_\infty), \mathrm{TV}(Q_n, Q_\infty)\}. \label{eq:LeCam_bound}
\end{equation}
In particular, in the canonical calibration $e^{\varepsilon_0(n)} = c^2 n$ (equivalently $\delta_n = (1 + c^2 n)^{-1}$),
\begin{equation}
\Delta\bigl((P_n, Q_n), (P_\infty, Q_\infty)\bigr) \leq \frac{2}{c^2 n} + \frac{2}{c^4 n}. \label{eq:explicit_rate}
\end{equation}
\end{theorem}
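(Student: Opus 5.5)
The plan is to bound each marginal total variation separately, using the triangle inequality through an intermediate Poisson law with the ``wrong'' parameter. Then Lemma~\ref{lem:lecam-tv-bound} gives \eqref{eq:LeCam_bound}, since $P_n,Q_n,P_\infty,Q_\infty$ all live on $\mathbb{Z}_{\ge 0}$. Convergence of $\Delta$ to zero then follows once the right-hand sides of \eqref{eq:TVPn}--\eqref{eq:TVQn} vanish. This holds because $\delta_n=(1+e^{\varepsilon_0(n)})^{-1}\sim (c^2 n)^{-1}\to 0$, $n\delta_n^2=O(n^{-1})$, and $\lambda_n=n/(1+e^{\varepsilon_0(n)}) = 1/(a_n+1/n)\to c^{-2}=\lambda$, with the same limit for $\lambda_{n-1}$.

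For $P_n$, I would write
\[
\mathrm{TV}(P_n,P_\infty)\le \mathrm{TV}(\mathrm{Bin}(n,\delta_n),\mathrm{Poi}(\lambda_n))+\mathrm{TV}(\mathrm{Poi}(\lambda_n),\mathrm{Poi}(\lambda)).
\]
The first term is handled by Lemma~A.1, the binomial-to-Poisson bound. I expect its form to be $n\delta(1-e^{-\delta})$, obtained by coupling each $\mathrm{Bern}(\delta)$ with a $\mathrm{Poi}(\delta)$ maximally and applying Lemma~\ref{lem:tv-tensorization} across the $n$ independent coordinates. Contraction under the summation map (Lemma~\ref{lem:tv-contraction}) then passes the bound to the sums. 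The second term is handled by Lemma~A.2, which gives $\mathrm{TV}(\mathrm{Poi}(\mu),\mathrm{Poi}(\nu))\le|\mu-\nu|$. One way to see this is to write the larger Poisson as the smaller plus an independent $\mathrm{Poi}(|\mu-\nu|)$ and use the coupling bound, which yields $1-e^{-|\mu-\nu|}\le|\mu-\nu|$. The cruder form in \eqref{eq:TVPn} follows from $1-e^{-x}\le x$.

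For $Q_n$, I would write $Q_n$ as the law of $B+S$, with $B\sim\mathrm{Bern}(1-\delta_n)$ and $S\sim\mathrm{Bin}(n-1,\delta_n)$ independent, and compare it with $1+\mathrm{Poi}(\lambda)$, the law of $B'+S'$ with $B'\equiv 1$. Lemma~\ref{lem:tv-tensorization} applied to the pair $(B,S)$ versus $(1,S')$, followed by contraction under addition, gives
\[
\mathrm{TV}(Q_n,Q_\infty)\le \mathrm{TV}(\mathrm{Bern}(1-\delta_n),\delta_1)+\mathrm{TV}(\mathrm{Bin}(n-1,\delta_n),\mathrm{Poi}(\lambda)).
\]
The first term equals $\delta_n$. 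The second is bounded exactly as in the null case, with $n-1$ trials and intermediate parameter $\lambda_{n-1}$, which is \eqref{eq:TVQn}.

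For the explicit rate, set $e^{\varepsilon_0(n)}=c^2n$, so that $\delta_n=(1+c^2n)^{-1}\le (c^2 n)^{-1}$. This gives $n\delta_n^2\le 1/(c^4 n)$ and $|\lambda_n-\lambda| = \frac{1}{c^2}-\frac{n}{1+c^2n}=\frac{1}{c^2(1+c^2n)}\le \frac{1}{c^4 n}$. Hence $\mathrm{TV}(P_n,P_\infty)\le 2/(c^4n)$. Similarly, $\delta_n\le 1/(c^2n)$, $(n-1)\delta_n^2\le 1/(c^4 n)$, and
\[
|\lambda_{n-1}-\lambda|=\frac{1+c^2}{c^2(1+c^2n)}\le \frac{1}{c^4 n}+\frac{1}{c^2 n}.
\]
Summing gives $\mathrm{TV}(Q_n,Q_\infty)\le 2/(c^2n)+2/(c^4n)$, and taking the maximum yields \eqref{eq:explicit_rate}.

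I expect the only delicate point to be this last bookkeeping step, namely checking that the constants fit exactly into $2/(c^2n)+2/(c^4n)$. The other steps reduce to the appendix lemmas, which I take as given in the form stated.
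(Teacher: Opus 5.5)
Your proposal is correct and follows the paper's proof essentially step for step. It uses Lemma~A.1 for the binomial-to-Poisson step, Lemma~A.2 for the parameter perturbation, the Bernoulli-to-constant coupling costing $\delta_n$ for the alternative, Lemma~2.5 for the Le Cam bound, and the same constant bookkeeping under the canonical calibration. The only difference is cosmetic: you handle $Q_n$ by tensorizing $(B,S)$ against $(1,S')$ and contracting under addition, whereas the paper chains the triangle inequality through $\mathcal{L}(S_{n-1}+1)$ using shift invariance. Both give the same bound.
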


\begin{proof}
\textbf{Step 1: approximate the null $\mathrm{Bin}(n, \delta_n)$ by $\mathrm{Poi}(\lambda_n)$.} Let $S_n \sim \mathrm{Bin}(n, \delta_n)$ and let
$N_n \sim \mathrm{Poi}(\lambda_n)$ with $\lambda_n = n\delta_n$. By Lemma~A.1,
\[
\mathrm{TV}(\mathcal{L}(S_n), \mathrm{Poi}(\lambda_n)) \leq n\delta_n(1 - e^{-\delta_n}) \leq n\delta_n^2.
\]

\textbf{Step 2: perturb $\mathrm{Poi}(\lambda_n)$ to $\mathrm{Poi}(\lambda)$.} By Lemma~A.2,
\[
\mathrm{TV}(\mathrm{Poi}(\lambda_n), \mathrm{Poi}(\lambda)) \leq 1 - e^{-|\lambda_n - \lambda|} \leq |\lambda_n - \lambda|.
\]
By the triangle inequality this yields \eqref{eq:TVPn}.

\textbf{Step 3: approximate the alternative.} Under $Q_n$, we may write $K_n = S_{n-1} + B_n$ with
$S_{n-1} \sim \mathrm{Bin}(n-1, \delta_n)$ and $B_n \sim \mathrm{Bern}(1 - \delta_n)$ independent. Couple $B_n$ to the constant 1 so that
$P(B_n \neq 1) = \delta_n$. By Lemma~2.2 and the shift $K_n^{\mathrm{alt}} = S_{n-1} + B_n$,
\[
\mathrm{TV}\bigl(\mathcal{L}(K_n^{\mathrm{alt}}), \mathcal{L}(S_{n-1} + 1)\bigr) \leq \delta_n.
\]

Next, apply Lemma~A.1 with $m = n-1$ to couple $S_{n-1}$ to $\mathrm{Poi}(\lambda_{n-1})$:
\[
\mathrm{TV}(\mathcal{L}(S_{n-1}), \mathrm{Poi}(\lambda_{n-1})) \leq (n-1)\delta_n(1 - e^{-\delta_n}) \leq (n-1)\delta_n^2.
\]
Finally, apply Lemma~A.2 to perturb $\mathrm{Poi}(\lambda_{n-1})$ to $\mathrm{Poi}(\lambda)$:
\[
\mathrm{TV}(\mathrm{Poi}(\lambda_{n-1}), \mathrm{Poi}(\lambda)) \leq |\lambda_{n-1} - \lambda|.
\]
Combine these three steps by the triangle inequality (and shift invariance) to obtain \eqref{eq:TVQn}.

\textbf{Step 4: Le Cam distance.} Since both experiments live on the same countable space $\mathbb{N}$,
Lemma~2.5 gives \eqref{eq:LeCam_bound}. Under \eqref{eq:critical}, we have $\delta_n \to 0$ and $\lambda_n, \lambda_{n-1} \to \lambda$, so the right-hand side
converges to 0.

\textbf{Step 5: explicit $O(n^{-1})$ constant under canonical calibration.} Assume $e^{\varepsilon_0(n)} = c^2 n$, so
$\delta_n = (1 + c^2 n)^{-1}$. Then
\[
n\delta_n^2 = \frac{n}{(1 + c^2 n)^2} \leq \frac{1}{c^4 n}, \qquad
|\lambda_n - \lambda| = \left|\frac{n}{1 + c^2 n} - \frac{1}{c^2}\right| = \frac{1}{c^2(1 + c^2 n)} \leq \frac{1}{c^4 n},
\]
hence $\mathrm{TV}(P_n, P_\infty) \leq 2/(c^4 n)$.

Similarly, $\delta_n \leq 1/(c^2 n)$, $(n-1)\delta_n^2 \leq 1/(c^4 n)$, and
\[
|\lambda_{n-1} - \lambda| = \left|\frac{n-1}{1 + c^2 n} - \frac{1}{c^2}\right| = \frac{1 + c^2}{c^2(1 + c^2 n)} \leq \frac{1 + c^2}{c^4 n}.
\]
Therefore
\[
\mathrm{TV}(Q_n, Q_\infty) \leq \frac{1}{c^2 n} + \frac{1}{c^4 n} + \frac{1 + c^2}{c^4 n} = \frac{2}{c^2 n} + \frac{2}{c^4 n}.
\]
Since $\mathrm{TV}(P_n, P_\infty) \leq 2/(c^4 n) \leq 2/(c^2 n) + 2/(c^4 n) = \mathrm{TV}(Q_n, Q_\infty)$, both are bounded by $2/(c^2 n) + 2/(c^4 n)$, and \eqref{eq:explicit_rate} follows from \eqref{eq:LeCam_bound}.
\end{proof}

\begin{corollary}[Privacy curve convergence for the Poisson-shift limit]
\label{cor:poisson-privacy-curve-convergence}
Under the assumptions of Theorem~\ref{thm:poisson-shift}, for every fixed $\varepsilon \geq 0$,
\[
\delta_{Q_n\|P_n}(\varepsilon) \to \delta_{Q_\infty\|P_\infty}(\varepsilon).
\]
Under the canonical calibration $e^{\varepsilon_0(n)} = c^2 n$, the convergence is quantitative:
\[
\bigl|\delta_{Q_n\|P_n}(\varepsilon) - \delta_{Q_\infty\|P_\infty}(\varepsilon)\bigr|
\leq (1 + e^\varepsilon)\Bigl(\frac{2}{c^2 n} + \frac{2}{c^4 n}\Bigr).
\]
\end{corollary}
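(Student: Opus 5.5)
This corollary follows by combining Lemma~\ref{lem:privacy-curve-stability} with the total-variation bounds of Theorem~\ref{thm:poisson-shift}. Both experiments $(P_n,Q_n)$ and $(P_\infty,Q_\infty)$ live on the same countable space $\mathbb{N}$. Note that Lemma~\ref{lem:privacy-curve-stability} is stated via the variational formula \eqref{eq:delta}, so it needs no absolute continuity. This matters because $Q_\infty$ puts no mass at $0$ while $P_\infty$ does; in the direction $Q\|P$ we have $Q_\infty\ll P_\infty$ anyway. Applying the lemma with $(P_n,Q_n)$ and $(P_\infty,Q_\infty)$ gives, for every fixed $\varepsilon\ge 0$,
\[
\bigl|\delta_{Q_n\|P_n}(\varepsilon)-\delta_{Q_\infty\|P_\infty}(\varepsilon)\bigr|\le \mathrm{TV}(Q_n,Q_\infty)+e^\varepsilon\,\mathrm{TV}(P_n,P_\infty).
\]

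\textbf{Qualitative convergence.} Under the critical regime \eqref{eq:critical}, Step~4 of the proof of Theorem~\ref{thm:poisson-shift} shows that the right-hand sides of \eqref{eq:TVPn} and \eqref{eq:TVQn} tend to zero. This holds because $\delta_n\to0$, $n\delta_n^2\to 0$ (since $n\delta_n\to\lambda$), and $\lambda_n,\lambda_{n-1}\to\lambda$. For fixed $\varepsilon$ the factor $e^\varepsilon$ is a constant, so the displayed bound vanishes and the convergence follows.

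\textbf{Quantitative rate.} Under the canonical calibration $e^{\varepsilon_0(n)}=c^2n$, Step~5 of the theorem's proof gives
\[
\mathrm{TV}(P_n,P_\infty)\le \frac{2}{c^4n}\le \frac{2}{c^2n}+\frac{2}{c^4n}
\quad\text{and}\quad
\mathrm{TV}(Q_n,Q_\infty)\le \frac{2}{c^2n}+\frac{2}{c^4n}.
\]
Substituting both into the display yields the factor $(1+e^\varepsilon)$ times $\frac{2}{c^2n}+\frac{2}{c^4n}$, as claimed.

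There is no real obstacle here. The one point to check is that the stability lemma is proved from the supremum definition \eqref{eq:delta} rather than from \eqref{eq:NP1}, so it remains valid even though the approximating and limit laws may have mismatched supports.
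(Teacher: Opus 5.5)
Your proposal is correct and follows the paper's own proof: apply Lemma~\ref{lem:privacy-curve-stability} to $(P_n,Q_n)$ and $(P_\infty,Q_\infty)$, then insert the total-variation bounds \eqref{eq:TVPn}--\eqref{eq:TVQn} from Theorem~\ref{thm:poisson-shift}. Using the common bound $\frac{2}{c^2 n}+\frac{2}{c^4 n}$ on both total-variation terms gives exactly the factor $(1+e^\varepsilon)$ in the stated rate.
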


\begin{proof}
Apply Lemma~\ref{lem:privacy-curve-stability} with the TV bounds \eqref{eq:TVPn}--\eqref{eq:TVQn}
and the explicit rate \eqref{eq:explicit_rate}.
\end{proof}

\begin{proposition}[Sharp $n^{-1}$ total-variation rate in the canonical Poisson regime]
\label{prop:poisson-tv-rate}
Assume the setting of Theorem~\ref{thm:poisson-shift} and the canonical calibration $e^{\varepsilon_0(n)}=c^2 n$, so that
$\delta_n=(1+c^2 n)^{-1}$ and $\lambda=c^{-2}$. Then, for all sufficiently large $n$,
\[
\frac{e^{-1/c^2}}{4c^4}\,\frac1n
\le \mathrm{TV}(P_n,P_\infty)
\le \frac{2}{c^4}\,\frac1n.
\]
In particular,
\[
\mathrm{TV}(P_n,P_\infty)=\Theta(n^{-1}).
\]
More precisely,
\[
P_n\{0\}-P_\infty\{0\}
= \frac{e^{-1/c^2}}{2c^4}\,\frac1n + O(n^{-2})
= \frac{e^{-1/c^2}}{2}\,n\delta_n^2 + O(n^{-2}).
\]
\end{proposition}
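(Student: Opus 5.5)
The upper bound needs no new work: it is exactly Step~5 of the proof of Theorem~\ref{thm:poisson-shift}, which gives $\mathrm{TV}(P_n,P_\infty)\le n\delta_n^2+|\lambda_n-\lambda|\le 2/(c^4 n)$. For the lower bound I will use the trivial inequality $\mathrm{TV}(P_n,P_\infty)\ge |P_n\{0\}-P_\infty\{0\}|$, since $\{0\}$ is a measurable set. So everything reduces to the precise expansion of the atom at zero, which is the last display of the statement.

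Write $P_n\{0\}=(1-\delta_n)^n$ and $P_\infty\{0\}=e^{-\lambda}$ with $\lambda=c^{-2}$. I will split the difference as
\[
(1-\delta_n)^n-e^{-\lambda}=\bigl[(1-\delta_n)^n-e^{-\lambda_n}\bigr]+\bigl[e^{-\lambda_n}-e^{-\lambda}\bigr],
\]
where $\lambda_n=n\delta_n$.

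For the first bracket, expand $n\log(1-\delta_n)=-\lambda_n-\tfrac12 n\delta_n^2+O(n\delta_n^3)$. Since $n\delta_n^3=O(n^{-2})$, this gives
\[
(1-\delta_n)^n=e^{-\lambda_n}\bigl(1-\tfrac12 n\delta_n^2+O(n^{-2})\bigr).
\]
This bracket is therefore $-\tfrac12 e^{-\lambda}n\delta_n^2+O(n^{-2})$, i.e.\ negative, of order $n^{-1}$.

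For the second bracket, the canonical calibration gives exactly $\lambda-\lambda_n=1/(c^2(1+c^2n))>0$. Hence
\[
e^{-\lambda_n}-e^{-\lambda}=e^{-\lambda}\bigl(e^{\lambda-\lambda_n}-1\bigr)=e^{-\lambda}\,\frac{1}{c^4 n}+O(n^{-2}),
\]
using $1/(c^2(1+c^2n))=1/(c^4n)+O(n^{-2})$. This bracket is positive, of size $e^{-\lambda}/(c^4 n)$.

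Adding the two brackets, and using $n\delta_n^2=1/(c^4n)+O(n^{-2})$, gives
\[
P_n\{0\}-P_\infty\{0\}=\frac{e^{-1/c^2}}{2c^4}\,\frac1n+O(n^{-2}).
\]
This equals $\tfrac12 e^{-1/c^2}n\delta_n^2+O(n^{-2})$, which is the claimed expansion. Consequently, for $n$ large the error term is at most half the leading term, so $|P_n\{0\}-P_\infty\{0\}|\ge e^{-1/c^2}/(4c^4 n)$. This yields the lower bound and hence $\Theta(n^{-1})$.

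The main obstacle is the sign bookkeeping. The binomial-versus-Poisson correction and the parameter shift $\lambda_n<\lambda$ have opposite signs and the same order $n^{-1}$, so they could in principle cancel. They do not: the coefficients are $-\tfrac12$ and $+1$ times $e^{-\lambda}/(c^4n)$, and I will verify each exactly rather than just bounding them. I will also make the $O(n^{-2})$ remainders explicit enough, via Taylor bounds on $\log(1-x)$ and $e^x$ for small $x$, that "sufficiently large $n$" is justified.
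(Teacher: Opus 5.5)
Your proposal is correct and follows essentially the same route as the paper: the upper bound is quoted from Step~5 of Theorem~\ref{thm:poisson-shift}, and the lower bound comes from $\mathrm{TV}(P_n,P_\infty)\ge|P_n\{0\}-P_\infty\{0\}|$ together with a Taylor expansion of the zero atom. The only difference is bookkeeping: the paper expands $\log\bigl(P_n\{0\}/P_\infty\{0\}\bigr)=c^{-2}-n\log\bigl(1+\tfrac{1}{c^2n}\bigr)$ in one step, whereas your split into a binomial-versus-Poisson term (coefficient $-\tfrac12$) and a parameter-shift term (coefficient $+1$) reaches the same net coefficient $\tfrac12$ and shows explicitly why the two order-$n^{-1}$ effects do not cancel.
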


\begin{proof}
The upper bound is exactly the $P_n$-bound from Theorem~\ref{thm:poisson-shift} under the canonical calibration.
For the lower bound, let $p_n(k):=P_n\{k\}$ and $p_\infty(k):=P_\infty\{k\}$. Then
\[
p_n(0)=(1-\delta_n)^n=\left(1+\frac{1}{c^2 n}\right)^{-n},
\qquad
p_\infty(0)=e^{-1/c^2}.
\]
Hence
\[
\log\frac{p_n(0)}{p_\infty(0)}
=\frac1{c^2}-n\log\left(1+\frac1{c^2 n}\right)
=\frac{1}{2c^4 n}-\frac{1}{3c^6 n^2}+O(n^{-3}),
\]
so
\[
p_n(0)-p_\infty(0)
=e^{-1/c^2}\Bigg(\exp\!\left(\frac{1}{2c^4 n}+O(n^{-2})\right)-1\Bigg)
=\frac{e^{-1/c^2}}{2c^4}\,\frac1n+O(n^{-2}).
\]
Since $\mathrm{TV}(P_n,P_\infty)\ge |p_n(0)-p_\infty(0)|$, this yields the lower bound and hence the claim.
\end{proof}

\begin{remark}[Chen--Stein comparison]
For sums of independent Bernoulli indicators, the classical Chen--Stein / Le Cam bound gives
\[
\mathrm{TV}\!\left(\mathcal{L}\!\left(\sum_i I_i\right), \mathrm{Poi}(\lambda)\right) \leq \sum_i p_i^2, \quad \lambda := \sum_i p_i.
\]
For $S \sim \mathrm{Bin}(m, p)$ this becomes $\mathrm{TV}(\mathcal{L}(S), \mathrm{Poi}(mp)) \leq mp^2$, which matches the order delivered by
Lemma~A.1 and by \eqref{eq:TVPn}--\eqref{eq:TVQn}. Thus Stein's method does not improve the leading $O(mp^2)$ rate in
the present i.i.d.\ setting, but it does provide a flexible route to non-identically distributed or weakly
dependent rare-event arrays; see \cite{BHJ92, CGS11}. For translated Poisson refinements, see also \cite{Rol07}.
\end{remark}

\subsection{Limiting privacy curve and the \texorpdfstring{$\delta$}{delta}-floor}

\begin{proposition}[Limiting curve as a Poisson series]
\label{prop:poisson-curve}
Let $P_\infty = \mathrm{Poi}(\lambda)$ and $Q_\infty$ be the law of $1 + J$ with $J \sim \mathrm{Poi}(\lambda)$, where $\lambda = c^{-2}$.
Then for every $\varepsilon \geq 0$,
\begin{equation}
\delta_{Q_\infty\|P_\infty}(\varepsilon) = \sum_{j \geq 0} \bigl(P_\infty(j-1) - e^\varepsilon P_\infty(j)\bigr)_+, \quad P_\infty(-1) := 0. \label{eq:Poi_curve}
\end{equation}
Moreover, the two-sided curve has a support-mismatch floor:
\begin{equation}
\delta_{\mathrm{two}}(\varepsilon) \geq \delta_{P_\infty\|Q_\infty}(\varepsilon) \geq P_\infty(\{0\}) = e^{-\lambda}, \qquad \forall\, \varepsilon \geq 0. \label{eq:floor}
\end{equation}
\end{proposition}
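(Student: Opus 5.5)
The plan is to work entirely on the countable space $\mathbb{N}$ and apply the countable-space identity \eqref{eq:NP2} from Lemma~2.1; that version does not need absolute continuity, which matters for the reverse direction. First I would write down the two pmfs. The pmf of $P_\infty$ is $p(j)=e^{-\lambda}\lambda^j/j!$ for $j\ge 0$. Since $Q_\infty$ is the law of $1+J$ with $J\sim\mathrm{Poi}(\lambda)$, its pmf is $q(j)=P(J=j-1)=P_\infty(j-1)$ for $j\ge 1$. Setting $P_\infty(-1):=0$ gives $q(0)=0$.

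Substituting into \eqref{eq:NP2} gives
\[
\delta_{Q_\infty\|P_\infty}(\varepsilon)=\sum_{j\ge0}\bigl(q(j)-e^\varepsilon p(j)\bigr)_+ ,
\]
which is exactly \eqref{eq:Poi_curve}. The series converges absolutely, since each term is at most $q(j)$ and $\sum_j q(j)=1$. I would note that here $Q_\infty\ll P_\infty$, because $P_\infty$ has full support on $\mathbb{N}$, so \eqref{eq:NP1} would also apply. One can also record that the likelihood ratio is $L(j)=j/\lambda$, so the summands are positive exactly when $j>\lambda e^\varepsilon$; this gives a tail-sum form, but it is not needed for the claim.

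For the floor \eqref{eq:floor}, the first inequality holds by the definition of $\delta_{\mathrm{two}}$ as a maximum. For the second, I would again use \eqref{eq:NP2} with the roles reversed:
\[
\delta_{P_\infty\|Q_\infty}(\varepsilon)=\sum_{j\ge0}\bigl(p(j)-e^\varepsilon q(j)\bigr)_+ .
\]
All terms are nonnegative, so the sum is at least its $j=0$ term. That term is $(p(0)-e^\varepsilon\cdot 0)_+=e^{-\lambda}$, because $q(0)=0$. Equivalently, one can take $A=\{0\}$ in the variational definition \eqref{eq:delta}: this gives $P_\infty(A)-e^\varepsilon Q_\infty(A)=e^{-\lambda}$ for every $\varepsilon\ge0$.

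There is no real obstacle. The one point needing care is that $P_\infty\not\ll Q_\infty$, since $Q_\infty\{0\}=0<P_\infty\{0\}$. The reverse curve therefore has to be computed through the countable-space identity \eqref{eq:NP2} or directly through the set $A=\{0\}$, not through the likelihood-ratio formula \eqref{eq:NP1}. This support mismatch is exactly the source of the $\varepsilon$-independent floor.
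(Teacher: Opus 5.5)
Your proposal is correct and takes essentially the same route as the paper. The series comes from the countable-space identity \eqref{eq:NP2} with $Q_\infty(j)=P_\infty(j-1)$ and $Q_\infty(0)=0$, and the floor comes from taking $A=\{0\}$ in \eqref{eq:delta}; keeping the $j=0$ term of the reversed series is the same observation, and your remark that \eqref{eq:NP1} cannot be used for the reverse direction because $P_\infty\not\ll Q_\infty$ is a correct point of care.
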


\begin{proof}
Since $P_\infty$ and $Q_\infty$ are supported on $\mathbb{N}$ and $Q_\infty(j) = P_\infty(j-1)$ for $j \geq 1$ (and $Q_\infty(0) = 0$),
the discrete identity in Lemma~2.1 yields \eqref{eq:Poi_curve}.

For the floor, note that $Q_\infty(\{0\}) = 0$ while $P_\infty(\{0\}) = e^{-\lambda}$. Taking $A = \{0\}$ in the definition \eqref{eq:delta}
of $\delta_{P_\infty\|Q_\infty}(\varepsilon)$ gives
\[
\delta_{P_\infty\|Q_\infty}(\varepsilon) \geq P_\infty(\{0\}) - e^\varepsilon Q_\infty(\{0\}) = e^{-\lambda}.
\]
\end{proof}

\begin{remark}[Non-commuting limits: the Poisson floor is a large-$n$ phenomenon]
\label{rem:noncommuting-limits}
For each finite $n$, both $P_n = \mathrm{Bin}(n,\delta_n)$ and $Q_n$ have full support on $\mathbb{N}$,
so the likelihood ratio $L_n$ is strictly positive everywhere and
\[
\lim_{\varepsilon \to \infty} \delta^{(n)}_{\mathrm{two}}(\varepsilon) = 0 \qquad \text{for every fixed } n.
\]
The floor $e^{-\lambda}$ in \eqref{eq:floor} belongs to the \emph{limit experiment} and arises because
$Q_\infty(\{0\}) = 0$: the atom at zero has support-mismatch in the limit. Therefore the two iterated
limits do not commute:
\[
\lim_{\varepsilon \to \infty}\lim_{n \to \infty} \delta^{(n)}_{\mathrm{two}}(\varepsilon)
= e^{-\lambda} > 0,
\qquad
\lim_{n \to \infty}\lim_{\varepsilon \to \infty} \delta^{(n)}_{\mathrm{two}}(\varepsilon) = 0.
\]
This non-commutativity is a genuine feature of the critical regime and not a defect of the approximation:
the floor $e^{-\lambda}$ is an intrinsic property of the Poisson-shift limit experiment capturing
the probability of zero errors, an event that separates the two hypotheses with probability $e^{-\lambda}$
regardless of the threshold $e^\varepsilon$.
\end{remark}

\begin{proposition}[Trade-off function of the Poisson-shift limit and recovery of the privacy curve]
\label{prop:poisson-tradeoff}
Let $(P_\infty,Q_\infty)$ be the Poisson-shift limit experiment of Theorem~\ref{thm:poisson-shift}, so that
\[
P_\infty=\mathrm{Poi}(\lambda),
\qquad
Q_\infty=1+\mathrm{Poi}(\lambda),
\qquad \lambda=c^{-2}>0,
\]
and let $J\sim \mathrm{Poi}(\lambda)$. Write
\[
p_\lambda(m):=\mathbb P(J=m)=e^{-\lambda}\frac{\lambda^m}{m!},
\qquad
\overline F_\lambda(m):=\mathbb P(J\ge m),\quad m\in\mathbb N.
\]
Then the trade-off function $f_{P_\infty,Q_\infty}$ is piecewise affine with knots at the Poisson upper-tail levels $\overline F_\lambda(m)$:
\[
f_{P_\infty,Q_\infty}(0)=1,
\qquad
f_{P_\infty,Q_\infty}(\alpha)=0\quad\text{for }\alpha\in[\overline F_\lambda(1),1],
\]
and, for every $m\ge 1$ and every
\[
\overline F_\lambda(m+1)\le \alpha\le \overline F_\lambda(m),
\]
we have
\[
f_{P_\infty,Q_\infty}(\alpha)
=1-\overline F_\lambda(m)-\frac{m}{\lambda}\bigl(\alpha-\overline F_\lambda(m+1)\bigr)
=\mathbb P(J\le m-1)-\frac{m}{\lambda}\bigl(\alpha-\mathbb P(J\ge m+1)\bigr).
\]
Equivalently, the Neyman--Pearson curve is the lower convex envelope of the points
\[
\bigl(\mathbb P(J\ge m),\, \mathbb P(J\le m-2)\bigr),\qquad m\ge 1,
\]
with the convention $\mathbb P(J\le -1)=0$.

Moreover, for every $\varepsilon\ge 0$,
\[
\delta_{Q_\infty\|P_\infty}(\varepsilon)
=\sup_{\alpha\in[0,1]}\Bigl\{1-f_{P_\infty,Q_\infty}(\alpha)-e^\varepsilon\alpha\Bigr\}.
\]
If $m_\varepsilon:=\lfloor \lambda e^\varepsilon\rfloor+1$, then one maximizer is
\[
\alpha_\varepsilon=\mathbb P(J\ge m_\varepsilon),
\]
and therefore
\[
\delta_{Q_\infty\|P_\infty}(\varepsilon)
=\mathbb P(J\ge m_\varepsilon-1)-e^\varepsilon\mathbb P(J\ge m_\varepsilon).
\]
This is exactly the Poisson upper-tail form of Proposition~\ref{prop:poisson-curve}.
\end{proposition}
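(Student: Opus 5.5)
The proof is a direct Neyman--Pearson computation for a monotone-likelihood-ratio pair on $\mathbb N$. First compute the likelihood ratio
\[
\ell(j):=\frac{Q_\infty(j)}{P_\infty(j)}=\frac{p_\lambda(j-1)}{p_\lambda(j)}=\frac{j}{\lambda},\qquad j\ge 0,
\]
which is nondecreasing in $j$, with $\ell(0)=0$. By the Neyman--Pearson lemma, the optimal level-$\alpha$ tests therefore reject for large $j$, possibly with randomization at one boundary point. The deterministic test $\varphi_m=\mathbf 1\{j\ge m\}$ has type-I error
\[
E_{P_\infty}[\varphi_m]=\overline F_\lambda(m).
\]
Its type-II error is
\[
Q_\infty(j\le m-1)=\mathbb P(1+J\le m-1)=\mathbb P(J\le m-2).
\]
For $\overline F_\lambda(m+1)\le\alpha\le\overline F_\lambda(m)$ with $m\ge1$, the optimal test rejects $\{j\ge m+1\}$ and rejects $j=m$ with probability $\gamma=(\alpha-\overline F_\lambda(m+1))/p_\lambda(m)$. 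Its type-II error is
\[
\mathbb P(J\le m-1)-\gamma\,Q_\infty(m),\qquad Q_\infty(m)=p_\lambda(m-1).
\]
Since $p_\lambda(m-1)/p_\lambda(m)=m/\lambda$, this is exactly the stated affine formula. At $\alpha=\overline F_\lambda(m)$ it also agrees with $\mathbb P(J\le m-2)$, which confirms continuity and the convex-envelope description with knots at $(\overline F_\lambda(m),\mathbb P(J\le m-2))$. For $\alpha\ge\overline F_\lambda(1)=1-e^{-\lambda}$, the test $\{j\ge1\}$ already has type-II error $Q_\infty(0)=0$, so $f=0$. At $\alpha=0$ the only admissible test rejects nothing, since every atom of $P_\infty$ is positive, so $f(0)=1$. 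The slopes $-m/\lambda$ become steeper as $\alpha$ decreases, so convexity is automatic.

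For the privacy-curve recovery, use the sup form~\eqref{eq:delta} restricted to tests, which is legitimate because randomized tests give the same supremum as sets (the objective is linear in $\varphi$). This gives
\[
\delta_{Q_\infty\|P_\infty}(\varepsilon)=\sup_\varphi\bigl\{E_Q\varphi-e^\varepsilon E_P\varphi\bigr\}.
\]
For fixed $\alpha$, maximizing $E_Q\varphi$ under $E_P\varphi\le\alpha$ gives $1-f(\alpha)$, so $\delta=\sup_\alpha\{1-f(\alpha)-e^\varepsilon\alpha\}$.

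The objective $1-f(\alpha)-e^\varepsilon\alpha$ is concave and piecewise affine. On the segment indexed by $m$ its slope is $m/\lambda-e^\varepsilon$. The maximum is therefore attained at the knot where the slope changes sign: segments with $m>\lambda e^\varepsilon$ (small $\alpha$) have positive slope, and those with $m\le\lambda e^\varepsilon$ have nonpositive slope. The maximizing knot is the right endpoint $\overline F_\lambda(m)$ of the last segment with $m>\lambda e^\varepsilon$, namely $m=\lfloor\lambda e^\varepsilon\rfloor+1=m_\varepsilon$. Evaluating there gives
\[
1-\mathbb P(J\le m_\varepsilon-2)-e^\varepsilon\overline F_\lambda(m_\varepsilon)=\mathbb P(J\ge m_\varepsilon-1)-e^\varepsilon\mathbb P(J\ge m_\varepsilon).
\]

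Finally, this agrees with~\eqref{eq:Poi_curve}. The summand $(p_\lambda(j-1)-e^\varepsilon p_\lambda(j))_+$ is positive exactly when $j/\lambda>e^\varepsilon$, i.e.\ $j\ge m_\varepsilon$. Summing those terms gives $\mathbb P(J\ge m_\varepsilon-1)-e^\varepsilon\mathbb P(J\ge m_\varepsilon)$.

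The main obstacle is bookkeeping rather than any deep step. Two points need care. First, the edge case $m=1$: there $Q_\infty(0)=0$ and the segment ends at $f=0$, and the indexing of the $\mathbb P(J\le m-2)$ convention must be kept consistent with it. Second, tie-breaking when $\lambda e^\varepsilon$ is an integer: then the slope vanishes on one segment and the maximizer is not unique, which is why the statement says ``one maximizer''.
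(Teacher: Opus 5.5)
Your proposal is correct and follows the paper's proof essentially step by step. It uses the same ingredients: the monotone likelihood ratio $j/\lambda$ (with value $0$ at $j=0$), randomized upper-tail Neyman--Pearson tests with the weight eliminated via $p_\lambda(m-1)/p_\lambda(m)=m/\lambda$, the variational identity by linearity in the test, and the slope analysis $m/\lambda-e^\varepsilon$ that places the maximizing knot at $m_\varepsilon=\lfloor\lambda e^\varepsilon\rfloor+1$ and matches the active set $\{j\ge m_\varepsilon\}$ of \eqref{eq:Poi_curve}. Your extra justifications (full support of $P_\infty$ forcing $f(0)=1$, and continuity plus increasing slopes at the knots giving convexity) just fill in details the paper leaves implicit.
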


\begin{proof}
Let
\[
p(j):=P_\infty\{j\}=e^{-\lambda}\frac{\lambda^j}{j!},
\qquad
q(j):=Q_\infty\{j\}=p(j-1)\quad (j\ge 1),
\qquad q(0)=0.
\]
Then the likelihood ratio is
\[
L(j):=\frac{q(j)}{p(j)}=
\begin{cases}
0, & j=0,\\[2mm]
\dfrac{j}{\lambda}, & j\ge 1.
\end{cases}
\]
Hence $L(j)$ is strictly increasing in $j$. By the Neyman--Pearson lemma, the most powerful tests for
$P_\infty$ versus $Q_\infty$ are upper-tail tests with possible randomization at one boundary point:
for some $m\ge 1$ and $\tau\in[0,1]$,
\[
\varphi_{m,\tau}(j)=\mathbf 1\{j\ge m+1\}+\tau\,\mathbf 1\{j=m\}.
\]
Its type-I error is
\[
\alpha=\mathbb E_{P_\infty}[\varphi_{m,\tau}]
=\mathbb P(J\ge m+1)+\tau\,\mathbb P(J=m)
=\overline F_\lambda(m+1)+\tau p_\lambda(m),
\]
so necessarily $\alpha\in[\overline F_\lambda(m+1),\overline F_\lambda(m)]$, and
\[
\tau=\frac{\alpha-\overline F_\lambda(m+1)}{p_\lambda(m)}.
\]
The corresponding type-II error is
\begin{align*}
f_{P_\infty,Q_\infty}(\alpha)
&=\mathbb E_{Q_\infty}[1-\varphi_{m,\tau}]\\
&=1-Q_\infty\{j\ge m+1\}-\tau Q_\infty\{j=m\}\\
&=1-\mathbb P(J\ge m)-\tau\,\mathbb P(J=m-1).
\end{align*}
Since
\[
\frac{\mathbb P(J=m-1)}{\mathbb P(J=m)}=\frac{m}{\lambda},
\]
eliminating $\tau$ gives
\[
f_{P_\infty,Q_\infty}(\alpha)
=1-\overline F_\lambda(m)-\frac{m}{\lambda}\bigl(\alpha-\overline F_\lambda(m+1)\bigr),
\]
which proves the stated piecewise-affine formula. If $\alpha\ge \overline F_\lambda(1)=\mathbb P(J\ge 1)$,
we may reject on $\{j\ge 1\}$ and randomize further at $j=0$; since $Q_\infty\{0\}=0$, the resulting
minimal type-II error is $0$. Also $f_{P_\infty,Q_\infty}(0)=1$.

For the relation with the privacy curve, let $\varphi$ be any test, with type-I error
$\alpha(\varphi):=\mathbb E_{P_\infty}[\varphi]$ and type-II error
$\beta(\varphi):=\mathbb E_{Q_\infty}[1-\varphi]$. Then
\[
Q_\infty(\varphi=1)-e^\varepsilon P_\infty(\varphi=1)
=1-\beta(\varphi)-e^\varepsilon\alpha(\varphi).
\]
Allowing randomized tests does not change the supremum in the privacy-curve variational formula, since the objective is linear in $\varphi$. Taking the supremum over all tests and then minimizing over tests at fixed level $\alpha$ yields the standard identity
\[
\delta_{Q_\infty\|P_\infty}(\varepsilon)
=\sup_{\alpha\in[0,1]}\Bigl\{1-f_{P_\infty,Q_\infty}(\alpha)-e^\varepsilon\alpha\Bigr\}.
\]
Now fix $\varepsilon\ge 0$. On the interval
$[\overline F_\lambda(m+1),\overline F_\lambda(m)]$, the function
\[
\alpha\longmapsto 1-f_{P_\infty,Q_\infty}(\alpha)-e^\varepsilon\alpha
\]
is affine with slope
\[
\frac{m}{\lambda}-e^\varepsilon.
\]
Hence it is increasing when $m/\lambda>e^\varepsilon$ and decreasing when $m/\lambda<e^\varepsilon$. If $\lambda e^\varepsilon$ is an integer, the maximum is attained on an entire boundary segment; the choice below is one convenient maximizer. Thus a maximizer is attained at the knot corresponding to
\[
m_\varepsilon=\lfloor \lambda e^\varepsilon\rfloor+1,
\qquad
\alpha_\varepsilon=\overline F_\lambda(m_\varepsilon)=\mathbb P(J\ge m_\varepsilon).
\]
Evaluating at that point gives
\begin{align*}
\delta_{Q_\infty\|P_\infty}(\varepsilon)
&=1-f_{P_\infty,Q_\infty}(\alpha_\varepsilon)-e^\varepsilon\alpha_\varepsilon\\
&=1-\mathbb P(J\le m_\varepsilon-2)-e^\varepsilon\mathbb P(J\ge m_\varepsilon)\\
&=\mathbb P(J\ge m_\varepsilon-1)-e^\varepsilon\mathbb P(J\ge m_\varepsilon).
\end{align*}
Since $q(j)-e^\varepsilon p(j)=p(j)\bigl(j/\lambda-e^\varepsilon\bigr)$, the active set in the discrete
Neyman--Pearson formula of Lemma~2.1 is exactly $\{j\ge m_\varepsilon\}$; thus the last display is the
same formula as Proposition~\ref{prop:poisson-curve}, written as a Poisson upper-tail difference.
\end{proof}

\begin{proposition}[Monotonicity of the Poisson-shift limiting curve]
\label{prop:poisson-monotonicity}
For $\lambda > 0$, let $P_\lambda := \mathrm{Poi}(\lambda)$ and $Q_\lambda := 1 + \mathrm{Poi}(\lambda)$.
\begin{enumerate}
\item[(i)] For fixed $\lambda$, the one-sided curve $\varepsilon \mapsto \delta_{Q_\lambda\|P_\lambda}(\varepsilon)$ is strictly decreasing on $[0, \infty)$.
\item[(ii)] For fixed $\varepsilon \geq 0$, the map $\lambda \mapsto \delta_{Q_\lambda\|P_\lambda}(\varepsilon)$ is strictly decreasing on $(0, \infty)$. Equivalently, under
the paper's parameterization $\lambda = c^{-2}$, the map $c \mapsto \delta_{Q_c\|P_c}(\varepsilon)$ is strictly increasing: larger $c$
means weaker privacy.
\item[(iii)] For fixed $\lambda$, the reverse curve $\varepsilon \mapsto \delta_{P_\lambda\|Q_\lambda}(\varepsilon)$ is strictly decreasing until it reaches the floor $e^{-\lambda}$,
after which it is constant. Hence the two-sided Poisson limiting curve is nonincreasing in $\varepsilon$
and bounded below by $e^{-\lambda}$.
\end{enumerate}
\end{proposition}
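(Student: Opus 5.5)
Write $p(j)=e^{-\lambda}\lambda^j/j!$ and $q(j)=p(j-1)$ for $j\ge1$, $q(0)=0$. By Lemma~2.1,
\[
\delta_{Q\|P}(\varepsilon)=\sum_{j\ge1}p(j)\bigl(j/\lambda-e^\varepsilon\bigr)_+=\mathbb E\bigl[(J/\lambda-e^\varepsilon)_+\bigr],\qquad J\sim\mathrm{Poi}(\lambda).
\]
This expectation representation drives all three parts.

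\textbf{Part (i).} The integrand $(J/\lambda-e^\varepsilon)_+$ is nonincreasing in $\varepsilon$. It is strictly decreasing on the event $\{J>\lambda e^\varepsilon\}$, and this event has positive Poisson probability for every $\varepsilon$. Hence the expectation is strictly decreasing in $\varepsilon$.

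\textbf{Part (ii).} This is the step I expect to be the main obstacle. I would reparametrize to separate the effect of $\lambda$: writing $t=e^\varepsilon$,
\[
\delta=\lambda^{-1}\,\mathbb E\bigl[(J-\lambda t)_+\bigr].
\]
To differentiate in $\lambda$, I would use the Poisson identity $\partial_\lambda \mathbb E g(J_\lambda)=\mathbb E[g(J+1)-g(J)]$, applied to the $\lambda$-dependent function $g_\lambda(j)=(j/\lambda-t)_+$. This gives
\[
\partial_\lambda\delta=\mathbb E\bigl[g_\lambda(J+1)-g_\lambda(J)\bigr]+\mathbb E\bigl[\partial_\lambda g_\lambda(J)\bigr].
\]
The second term equals $-\lambda^{-2}\mathbb E[J\,\mathbf 1\{J>\lambda t\}]$. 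The first term equals $\lambda^{-1}P(J+1>\lambda t)$ minus a boundary correction. I would then use $\mathbb E[J\mathbf 1\{J\ge m\}]=\lambda P(J\ge m-1)$ to combine the two terms. This should yield $\partial_\lambda\delta=-\lambda^{-1}(\text{a positive boundary term})$, which is, roughly, $-\lambda^{-1}(\lceil\lambda t\rceil-\lambda t)\,p(m-1)$ type quantities.

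If the calculus gets messy at the integer points of $\lambda t$, I have a fallback. By Proposition~\ref{prop:poisson-tradeoff}, with $m=\lfloor\lambda t\rfloor+1$,
\[
\delta=P(J\ge m-1)-tP(J\ge m).
\]
On each interval of $\lambda$ where $m$ is constant, I would differentiate this directly using $\partial_\lambda P(J\ge k)=p(k-1)$. The derivative is $p(m-2)-t\,p(m-1)=p(m-1)\bigl((m-1)/\lambda-t\bigr)$, which is negative because $m-1\le\lambda t$. The derivative vanishes only at the isolated endpoint where $m-1=\lambda t$. Since $\delta$ is continuous in $\lambda$, these piecewise strict decreases glue into strict decrease on $(0,\infty)$. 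The statement in terms of $c$ then follows because $\lambda=c^{-2}$ is decreasing in $c$.

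\textbf{Part (iii).} For the reverse curve,
\[
\delta_{P\|Q}(\varepsilon)=p(0)+\sum_{j\ge1}p(j)\bigl(1-e^\varepsilon j/\lambda\bigr)_+.
\]
The $j=0$ term is the floor $e^{-\lambda}$, as in Proposition~\ref{prop:poisson-curve}. The sum is nonzero only when $e^\varepsilon<\lambda$, which requires $\lambda>1$, and it involves only the terms with $j<\lambda e^{-\varepsilon}$. While the set $\{j\ge1:\, j<\lambda e^{-\varepsilon}\}$ is nonempty, each such term is strictly decreasing in $\varepsilon$, so the sum is strictly decreasing. Once $e^\varepsilon\ge\lambda$, the sum vanishes and the curve equals $e^{-\lambda}$ identically. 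If $\lambda\le1$, the curve is constant at the floor from $\varepsilon=0$ onward, and the strictly decreasing phase is empty.

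Finally, the two-sided curve is the maximum of two nonincreasing functions, so it is nonincreasing. It is bounded below by the floor $e^{-\lambda}$ from \eqref{eq:floor}.
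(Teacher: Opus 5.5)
Your proposal is correct and follows the paper's proof essentially step for step: (i) uses the strict pointwise decrease of $(L-e^{\varepsilon})_+$ on $\{L>e^{\varepsilon}\}$; (ii) differentiates $P(J\ge m-1)-e^{\varepsilon}P(J\ge m)$ with $m=\lfloor\lambda e^{\varepsilon}\rfloor+1$ on each interval where $m$ is constant and glues by continuity (your fallback is exactly the paper's argument, and your single formula $p(m-1)\bigl((m-1)/\lambda-t\bigr)$ also covers the paper's separate $m=1$ case); and (iii) uses the atom at $0$ together with the finite active set $\{1\le j<\lambda e^{-\varepsilon}\}$. One small correction to your primary route in (ii): the boundary term is $\lambda^{-1}(\lambda t-\lfloor\lambda t\rfloor)\,p(m-1)$, not $(\lceil\lambda t\rceil-\lambda t)$, and with this term it gives exactly the same derivative as your fallback.
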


\begin{proof}
For (i), write $L(j) := Q_\lambda(j)/P_\lambda(j) = j/\lambda$ for $j \geq 1$ and $L(0) = 0$. Since $L(j) \to \infty$ as
$j \to \infty$, for every $\varepsilon_2 > \varepsilon_1 \geq 0$ the pointwise inequality $(L - e^{\varepsilon_2})_+ \leq (L - e^{\varepsilon_1})_+$ is strict on the event
$\{L > e^{\varepsilon_2}\}$, which has positive $P_\lambda$-probability. Taking expectations under $P_\lambda$ proves strict decrease.

For (ii), let $m(\lambda, \varepsilon) := \lfloor \lambda e^\varepsilon \rfloor + 1$. The Neyman--Pearson set is the upper tail $\{j \geq m(\lambda, \varepsilon)\}$, so
\[
\delta_{Q_\lambda\|P_\lambda}(\varepsilon) = P_\lambda(J \geq m - 1) - e^\varepsilon P_\lambda(J \geq m), \quad J \sim \mathrm{Poi}(\lambda).
\]
On each interval of $\lambda$ for which $m$ is constant, differentiating the Poisson tails gives
\[
\frac{d}{d\lambda}\delta_{Q_\lambda\|P_\lambda}(\varepsilon) = \begin{cases} -e^\varepsilon P_\lambda(J = 0), & m = 1, \\ P_\lambda(J = m-2) - e^\varepsilon P_\lambda(J = m-1), & m \geq 2. \end{cases}
\]
If $m \geq 2$, then $m - 1 \leq \lambda e^\varepsilon$, hence
\[
P_\lambda(J = m-2) - e^\varepsilon P_\lambda(J = m-1) = P_\lambda(J = m-2)\!\left(1 - \frac{e^\varepsilon \lambda}{m-1}\right) < 0
\]
away from the breakpoints. At a breakpoint $\lambda_0$, the series formula of Proposition~\ref{prop:poisson-curve} shows that $\delta_{Q_\lambda\|P_\lambda}(\varepsilon)$ is
continuous in $\lambda$, by dominated convergence applied termwise to the Poisson pmf series. Hence strict decrease on each
open interval on which $m$ is constant, together with continuity at the breakpoints, yields global strict decrease in $\lambda$.

For (iii), note that $P_\lambda(j)/Q_\lambda(j) = \lambda/j$ for $j \geq 1$, so the active set in the discrete Neyman--Pearson
formula is finite. Once $e^\varepsilon \geq \lambda$, no $j \geq 1$ contributes and only the atom at 0 remains, giving the
constant floor $e^{-\lambda}$. Before that threshold, the same strict-decrease argument as in (i) applies.
\end{proof}

\section{Proportional compositions: Skellam-shift limit}

We now consider compositions $k = k(n)$ with $\pi_n := k/n \to \pi \in (0, 1)$ under shuffled RR in the
critical scaling $a_n \to c^2$.

\subsection{Centered decomposition}

Let $K_{n,k}$ be the released number of ones under composition $k$. Let
\begin{align*}
A_{n,k} &\sim \mathrm{Bin}(n-k, \delta_n) \quad \text{(false positives from 0-users)},\\
B_{n,k} &\sim \mathrm{Bin}(k, \delta_n) \quad \text{(false negatives from 1-users)},
\end{align*}
independent. Then
\begin{equation}
K_{n,k} = k + A_{n,k} - B_{n,k}, \qquad D_{n,k} := K_{n,k} - k = A_{n,k} - B_{n,k}. \label{eq:Dnk}
\end{equation}
The neighboring dataset with $k+1$ ones satisfies the analogous decomposition
\begin{equation}
K_{n,k+1} = k + 1 + A'_{n,k} - B'_{n,k}, \qquad D'_{n,k} := K_{n,k+1} - k = 1 + A'_{n,k} - B'_{n,k}, \label{eq:Dnk_alt}
\end{equation}
where $A'_{n,k} \sim \mathrm{Bin}(n-k-1, \delta_n)$ and $B'_{n,k} \sim \mathrm{Bin}(k+1, \delta_n)$ are independent.

\subsection{Skellam limit}

\begin{theorem}[Skellam-shift limit experiment]
\label{thm:skellam-shift}
Assume $\pi_n = k/n \to \pi \in (0,1)$ and $a_n = e^{\varepsilon_0(n)}/n \to c^2 \in (0, \infty)$. Define
\[
\lambda_0 := \frac{1 - \pi}{c^2}, \qquad \lambda_1 := \frac{\pi}{c^2}.
\]
Let $P_n$ be the law of $D_{n,k}$ in \eqref{eq:Dnk} under $T_{n,k}$ and $Q_n$ be the law of $D'_{n,k}$ in \eqref{eq:Dnk_alt} under $T_{n,k+1}$. Let
$D \sim \mathrm{Skellam}(\lambda_0, \lambda_1)$, i.e.\ $D = X - Y$ with $X \sim \mathrm{Poi}(\lambda_0)$ and $Y \sim \mathrm{Poi}(\lambda_1)$ independent, and define
the Skellam-shift limit experiment
\[
\mathcal{E}^{\mathrm{Skellam}}_\infty := (P_\infty, Q_\infty) := \bigl(\mathcal{L}(D), \mathcal{L}(1 + D)\bigr).
\]
Then $P_n \to P_\infty$ and $Q_n \to Q_\infty$ in total variation, hence $\Delta\bigl((P_n, Q_n), (P_\infty, Q_\infty)\bigr) \to 0$.

Moreover, letting $\lambda_{0,n} := (n-k)\delta_n$ and $\lambda_{1,n} := k\delta_n$, we have the explicit bounds
\begin{align}
\mathrm{TV}(P_n, P_\infty) &\leq (n-k)\delta_n(1-e^{-\delta_n}) + k\delta_n(1-e^{-\delta_n}) + |\lambda_{0,n} - \lambda_0| + |\lambda_{1,n} - \lambda_1| \nonumber\\
&\leq n\delta_n^2 + |\lambda_{0,n} - \lambda_0| + |\lambda_{1,n} - \lambda_1|, \label{eq:TVPn_Sk}\\
\mathrm{TV}(Q_n, Q_\infty) &\leq (n-k-1)\delta_n(1-e^{-\delta_n}) + (k+1)\delta_n(1-e^{-\delta_n}) + |\lambda'_{0,n} - \lambda_0| + |\lambda'_{1,n} - \lambda_1| \nonumber\\
&\leq n\delta_n^2 + |\lambda'_{0,n} - \lambda_0| + |\lambda'_{1,n} - \lambda_1|, \label{eq:TVQn_Sk}
\end{align}
where $\lambda'_{0,n} := (n-k-1)\delta_n$ and $\lambda'_{1,n} := (k+1)\delta_n$.
\end{theorem}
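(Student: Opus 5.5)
The plan is to mirror the proof of Theorem~\ref{thm:poisson-shift}, treating the two independent binomial components separately and then transporting the bound through the subtraction map. For the null, $D_{n,k} = A_{n,k} - B_{n,k}$ is the image of the pair $(A_{n,k}, B_{n,k})$, with product law $\mathrm{Bin}(n-k,\delta_n)\otimes\mathrm{Bin}(k,\delta_n)$, under the measurable map $f(a,b) = a-b$. Likewise $D = X - Y$ is the image of $\mathrm{Poi}(\lambda_0)\otimes\mathrm{Poi}(\lambda_1)$ under the same $f$. By the contraction Lemma~\ref{lem:tv-contraction}, $\mathrm{TV}(P_n,P_\infty)$ is bounded by the TV distance between these product laws. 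The tensorization Lemma~\ref{lem:tv-tensorization} then bounds that by the sum of the two marginal TV distances.

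Each marginal is handled by the triangle inequality through an intermediate Poisson. For $A_{n,k}$, Lemma~A.1 with $m = n-k$ gives
\[
\mathrm{TV}(\mathrm{Bin}(n-k,\delta_n),\mathrm{Poi}(\lambda_{0,n})) \le (n-k)\delta_n(1-e^{-\delta_n}),
\]
and Lemma~A.2 gives $\mathrm{TV}(\mathrm{Poi}(\lambda_{0,n}),\mathrm{Poi}(\lambda_0)) \le |\lambda_{0,n}-\lambda_0|$. The same two steps apply to $B_{n,k}$ with $m=k$ and the pair $\lambda_{1,n},\lambda_1$. Summing the four terms gives the first line of \eqref{eq:TVPn_Sk}. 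The second line follows from $1-e^{-\delta_n}\le\delta_n$ and $(n-k)+k = n$.

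For the alternative, $D'_{n,k} = 1 + (A'_{n,k} - B'_{n,k})$ and $1 + D$ are both images of product laws under $(a,b)\mapsto 1+a-b$. The identical argument therefore applies with $m = n-k-1$ and $m = k+1$ and the parameters $\lambda'_{0,n},\lambda'_{1,n}$. Unlike Step~3 of Theorem~\ref{thm:poisson-shift}, there is no separate Bernoulli coupling term: the flipped user's possible false negative is already included in $B'_{n,k}\sim\mathrm{Bin}(k+1,\delta_n)$. This yields \eqref{eq:TVQn_Sk}, again using $(n-k-1)+(k+1)=n$.

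For convergence, the critical regime gives $\delta_n\to 0$ and $n\delta_n \to c^{-2}$, since $n\delta_n = n/(1+e^{\varepsilon_0(n)}) = 1/(n^{-1}+a_n)$. Hence $n\delta_n^2 = (n\delta_n)\delta_n \to 0$. Also $\lambda_{1,n} = \pi_n\, n\delta_n \to \pi/c^2 = \lambda_1$ and $\lambda_{0,n} = (1-\pi_n)\,n\delta_n \to \lambda_0$. The primed parameters differ from these by $\delta_n \to 0$, so they have the same limits. Both TV bounds therefore vanish, and Lemma~\ref{lem:lecam-tv-bound} gives the Le Cam convergence, since all four laws live on $\mathbb{Z}$.

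No step is a genuine obstacle. The only point needing care is that the exact Skellam density, a Bessel-function series, is never needed: contraction under the difference map sidesteps it entirely.
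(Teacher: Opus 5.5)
Your proposal is correct and follows essentially the paper's argument: Lemma~A.1 and Lemma~A.2 on each binomial component, tensorization over the independent pair, contraction under the (shifted) difference map, and the critical-regime limits $n\delta_n \to c^{-2}$, $\pi_n \to \pi$. The only difference is cosmetic. You contract once and apply the triangle inequality marginal by marginal, whereas the paper contracts twice and applies the triangle inequality to the laws of the differences; both routes give identical bounds.
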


\begin{proof}
\textbf{Step 1: Poisson approximation for $A_{n,k}$ and $B_{n,k}$.} By Lemma~A.1 applied to $A_{n,k} \sim \mathrm{Bin}(n-k, \delta_n)$, we can couple $A_{n,k}$ to $\tilde{A}_{n,k} \sim \mathrm{Poi}(\lambda_{0,n})$ so that
\[
\mathrm{TV}(\mathcal{L}(A_{n,k}), \mathrm{Poi}(\lambda_{0,n})) \leq (n-k)\delta_n(1-e^{-\delta_n}).
\]
Similarly,
\[
\mathrm{TV}(\mathcal{L}(B_{n,k}), \mathrm{Poi}(\lambda_{1,n})) \leq k\delta_n(1-e^{-\delta_n}).
\]

\textbf{Step 2: push the approximation through the difference map.} Let $\tilde{B}_{n,k} \sim \mathrm{Poi}(\lambda_{1,n})$
independent of $\tilde{A}_{n,k}$. By Lemma~2.4,
\[
\mathrm{TV}\bigl(\mathcal{L}(A_{n,k}, B_{n,k}), \mathcal{L}(\tilde{A}_{n,k}, \tilde{B}_{n,k})\bigr) \leq (n-k)\delta_n(1-e^{-\delta_n}) + k\delta_n(1-e^{-\delta_n}).
\]
Since total variation cannot increase under the measurable map $(a,b) \mapsto a - b$, we obtain
\begin{equation}
\mathrm{TV}\bigl(\mathcal{L}(D_{n,k}), \mathcal{L}(\tilde{A}_{n,k} - \tilde{B}_{n,k})\bigr) \leq (n-k)\delta_n(1-e^{-\delta_n}) + k\delta_n(1-e^{-\delta_n}). \label{eq:step2}
\end{equation}

\textbf{Step 3: perturb Poisson parameters to the limits.} Let $X \sim \mathrm{Poi}(\lambda_0)$ and $Y \sim \mathrm{Poi}(\lambda_1)$
independent, so $D = X - Y \sim \mathrm{Skellam}(\lambda_0, \lambda_1)$. By Lemma~A.2,
\[
\mathrm{TV}(\mathrm{Poi}(\lambda_{0,n}), \mathrm{Poi}(\lambda_0)) \leq |\lambda_{0,n} - \lambda_0|, \qquad \mathrm{TV}(\mathrm{Poi}(\lambda_{1,n}), \mathrm{Poi}(\lambda_1)) \leq |\lambda_{1,n} - \lambda_1|.
\]
Using Lemma~2.4 and contraction under $(x,y) \mapsto x - y$ again gives
\begin{equation}
\mathrm{TV}\bigl(\mathcal{L}(\tilde{A}_{n,k} - \tilde{B}_{n,k}), \mathcal{L}(X - Y)\bigr) \leq |\lambda_{0,n} - \lambda_0| + |\lambda_{1,n} - \lambda_1|. \label{eq:step3}
\end{equation}
Combining \eqref{eq:step2} and \eqref{eq:step3} by the triangle inequality yields \eqref{eq:TVPn_Sk}. The final inequality uses $1 - e^{-\delta_n} \leq \delta_n$.

\textbf{Step 4: the alternative.} The same argument applies to $A'_{n,k} \sim \mathrm{Bin}(n-k-1, \delta_n)$ and
$B'_{n,k} \sim \mathrm{Bin}(k+1, \delta_n)$, yielding \eqref{eq:TVQn_Sk} for the law of $A'_{n,k} - B'_{n,k}$. Since $D'_{n,k} = 1 + (A'_{n,k} - B'_{n,k})$ and
shifts preserve total variation, the same bound holds for $Q_n$ relative to $Q_\infty = \mathcal{L}(1 + D)$.

\textbf{Step 5: Le Cam distance and convergence.} Both experiments live on the countable space $\mathbb{Z}$,
so Lemma~2.5 yields experiment-level convergence. Under $a_n \to c^2$ and $\pi_n \to \pi$, we have $\lambda_{0,n} \to \lambda_0$
and $\lambda_{1,n} \to \lambda_1$, while $n\delta_n^2 \to 0$; thus the TV bounds vanish.
\end{proof}

\begin{corollary}[Explicit $O(n^{-1})$ rate under canonical calibration]
\label{cor:skellam-explicit-rate}
Assume the setting of Theorem~\ref{thm:skellam-shift} with the canonical calibration $e^{\varepsilon_0(n)} = c^2 n$ and $k = \lfloor\pi n\rfloor$ for some fixed $\pi \in (0,1)$. Then
\[
\Delta\bigl((P_n, Q_n), (P_\infty, Q_\infty)\bigr) \leq \frac{2c^2 + 3}{c^4 n}.
\]
In particular, the Skellam limit is attained at the explicit rate $O(n^{-1})$.
\end{corollary}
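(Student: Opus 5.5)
The plan is to specialize the explicit bounds \eqref{eq:TVPn_Sk}--\eqref{eq:TVQn_Sk} of Theorem~\ref{thm:skellam-shift} to the calibration $\delta_n=(1+c^2n)^{-1}$ and then apply Lemma~\ref{lem:lecam-tv-bound}, exactly as in Step~5 of the proof of Theorem~\ref{thm:poisson-shift}. The common term is handled as before: $n\delta_n^2 = n/(1+c^2n)^2 \le 1/(c^4 n)$. It then remains to show that, for both $P_n$ and $Q_n$, the sum of the two Poisson-parameter discrepancies is at most $2(c^2+1)/(c^4 n)$. Adding this to $1/(c^4n)$ gives exactly $(2c^2+3)/(c^4n)$.

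To control the rounding in $k=\lfloor \pi n\rfloor$, write $k=\pi n-\theta$ with $\theta\in[0,1)$. Putting everything over the common denominator $c^2(1+c^2n)$ gives
\[
\lambda_{1,n}-\lambda_1=\frac{-(c^2\theta+\pi)}{c^2(1+c^2n)},\qquad
\lambda_{0,n}-\lambda_0=\frac{c^2\theta-(1-\pi)}{c^2(1+c^2n)}.
\]
For the alternative, we have $\lambda'_{0,n}=\lambda_{0,n}-\delta_n$ and $\lambda'_{1,n}=\lambda_{1,n}+\delta_n$. The corresponding numerators are therefore $c^2(\theta-1)-(1-\pi)$ and $c^2(1-\theta)-\pi$.

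Since $\theta\in[0,1)$ and $\pi\in(0,1)$, each of these four numerators has absolute value at most $c^2+1$. For the two mixed-sign cases the bound is $\max\{c^2,1\}\le c^2+1$; for the two same-sign cases it is at most $c^2\cdot 1+1$. Using $1+c^2n\ge c^2n$, each discrepancy is then at most $(c^2+1)/(c^4n)$.

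Summing the two discrepancies and the $n\delta_n^2$ term bounds both $\mathrm{TV}(P_n,P_\infty)$ and $\mathrm{TV}(Q_n,Q_\infty)$ by $(2c^2+3)/(c^4n)$. Lemma~\ref{lem:lecam-tv-bound} (both experiments live on $\mathbb{Z}$) then transfers this bound to the Le Cam distance.

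The only delicate point is the bookkeeping of the floor offset $\theta$ together with the $\pm1$ shift in the alternative. One must check that no numerator exceeds $c^2+1$, in particular the same-sign case $c^2\theta+\pi<c^2+1$. Everything else is direct substitution into the theorem's bounds.
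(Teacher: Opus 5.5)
Your proposal is correct and follows the paper's own proof. It specializes \eqref{eq:TVPn_Sk}--\eqref{eq:TVQn_Sk} to $\delta_n=(1+c^2n)^{-1}$, bounds $n\delta_n^2\le 1/(c^4n)$ and each Poisson-parameter discrepancy by $(c^2+1)/(c^4n)$ using the floor offset, and then applies Lemma~\ref{lem:lecam-tv-bound}. Your explicit computation of all four numerators, including the $\mp\delta_n$ shifts for the alternative, fills in the cases the paper only covers with ``similarly''.
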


\begin{proof}
Let $r_n := \pi n - \lfloor\pi n\rfloor \in [0,1)$. Under the canonical calibration, $\delta_n = (1 + c^2 n)^{-1}$, so
\[
n\delta_n^2 = \frac{n}{(1 + c^2 n)^2} \leq \frac{1}{c^4 n}.
\]
Also
\[
\lambda_{0,n} - \lambda_0 = \frac{n - k}{1 + c^2 n} - \frac{1 - \pi}{c^2} = \frac{c^2 r_n - (1-\pi)}{c^2(1 + c^2 n)},
\]
hence $|\lambda_{0,n} - \lambda_0| \leq (c^2 + 1)/(c^4 n)$. Similarly, $|\lambda_{1,n} - \lambda_1| \leq (c^2 + 1)/(c^4 n)$. Therefore \eqref{eq:TVPn_Sk} gives
\[
\mathrm{TV}(P_n, P_\infty) \leq \frac{1}{c^4 n} + \frac{2(c^2 + 1)}{c^4 n} = \frac{2c^2 + 3}{c^4 n}.
\]
For the alternative parameters, $|\lambda'_{0,n} - \lambda_0|$ and $|\lambda'_{1,n} - \lambda_1|$ are again bounded by $(c^2+1)/(c^4 n)$, so \eqref{eq:TVQn_Sk} yields
the same bound for $\mathrm{TV}(Q_n, Q_\infty)$. Apply \eqref{eq:LeCam_bound}.
\end{proof}

\begin{proposition}[Sharp $n^{-1}$ total-variation rate in the canonical Skellam regime]
\label{prop:skellam-tv-rate}
Assume the setting of Theorem~\ref{thm:skellam-shift}, the canonical calibration $e^{\varepsilon_0(n)}=c^2 n$, and
$k_n=\lfloor \pi n\rfloor$ (more generally, the same proof works whenever $k_n-\pi n=O(1)$). Then,
for all sufficiently large $n$,
\[
\frac{e^{-1/c^2}}{4c^4}\,\frac1n
\le \mathrm{TV}(P_n,P_\infty)
\le \frac{2c^2+3}{c^4}\,\frac1n.
\]
In particular,
\[
\mathrm{TV}(P_n,P_\infty)=\Theta(n^{-1}).
\]
The same conclusion holds for $\mathrm{TV}(Q_n,Q_\infty)$.
\end{proposition}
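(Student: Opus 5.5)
The upper bound $\mathrm{TV}(P_n,P_\infty)\le (2c^2+3)/(c^4n)$ is exactly what Corollary~\ref{cor:skellam-explicit-rate} gives. It uses only $k_n-\pi n=O(1)$, so it covers the general case as well. The real work is the lower bound.

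For the lower bound I will not try to mimic the atom-at-zero argument of Proposition~\ref{prop:poisson-tv-rate}. The Skellam pmf has no simple closed form, so I would use a characteristic-function test instead. For any real $\theta$, the function $d\mapsto e^{i\theta d}$ has modulus $1$. Hence
\[
|\varphi_n(\theta)-\varphi_\infty(\theta)|\le 2\,\mathrm{TV}(P_n,P_\infty),
\]
where $\varphi_n,\varphi_\infty$ are the characteristic functions of $D_{n,k}$ and $D$. By independence,
\[
\log\varphi_n(\theta)=(n-k)\log\bigl(1+\delta_n(e^{i\theta}-1)\bigr)+k\log\bigl(1+\delta_n(e^{-i\theta}-1)\bigr),
\]
\[
\log\varphi_\infty(\theta)=\lambda_0(e^{i\theta}-1)+\lambda_1(e^{-i\theta}-1).
\]

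First I would check the obvious choice $\theta=\pi$, i.e.\ the parity test $(-1)^D$. Under the canonical calibration its first-order $n^{-1}$ term cancels exactly: $n\delta_n$ is too small by $1/(c^4n)$, and this is compensated by the $-2n\delta_n^2$ from $\log(1-2\delta_n)$. So a generic $\theta$ is needed.

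Next I would expand $\Delta_n(\theta):=\log\varphi_n-\log\varphi_\infty$ to order $n^{-1}$, using $\log(1+x)=x-x^2/2+O(x^3)$ and $n\delta_n=c^{-2}-c^{-4}n^{-1}+O(n^{-2})$.

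\emph{Imaginary part.} The first-order term carries $(n-k)\delta_n-\lambda_0$ and $k\delta_n-\lambda_1$. These depend on the rounding $r_n$ only through $\sin\theta$, so they affect only $\operatorname{Im}\Delta_n$.

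\emph{Real part.} Here only the total $n=(n-k)+k$ enters. The first-order contribution is $(1-\cos\theta)/(c^4n)$. The quadratic term contributes $-(\cos2\theta-2\cos\theta+1)/(2c^4n)$, and its real part is independent of $\pi$ because $(e^{\pm i\theta}-1)^2$ have equal real parts. Summing,
\[
\operatorname{Re}\Delta_n(\theta)=\frac{\sin^2\theta}{c^4 n}+O(n^{-2}),
\qquad \Delta_n(\theta)=O(n^{-1}).
\]
This again confirms that $\theta=\pi$ is degenerate and shows that $\theta=\pi/2$ is optimal.

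At $\theta=\pi/2$ we have $|\varphi_\infty|=e^{-(\lambda_0+\lambda_1)}=e^{-1/c^2}$. Then
\[
|\varphi_n-\varphi_\infty|=|\varphi_\infty|\,|e^{\Delta_n}-1|\ge e^{-1/c^2}\bigl(|\operatorname{Re}\Delta_n|-O(n^{-2})\bigr)=\frac{e^{-1/c^2}}{c^4n}+O(n^{-2}).
\]
Therefore $\mathrm{TV}(P_n,P_\infty)\ge e^{-1/c^2}/(2c^4n)+O(n^{-2})$, which exceeds $e^{-1/c^2}/(4c^4n)$ for all large $n$.

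For $Q_n$ the same computation applies with $(n-k-1,k+1)$ in place of $(n-k,k)$. The sizes still total $n$, so the real part of the expansion is unchanged. The shift by $1$ multiplies both characteristic functions by $e^{i\theta}$, which preserves moduli of differences. The upper bound for $Q_n$ again comes from Corollary~\ref{cor:skellam-explicit-rate}.

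The main obstacle is doing the order-$n^{-1}$ expansion carefully enough. The test must isolate a real-part term that is nonzero and independent of the rounding of $k$, and the parity cancellation shows how easily the leading term can vanish. Controlling the $O(n^{-2})$ remainders uniformly is routine, since $\theta$ is fixed and $\delta_n\asymp n^{-1}$.
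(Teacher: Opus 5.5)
Your proposal is correct and is essentially the paper's argument. The upper bound is Corollary~\ref{cor:skellam-explicit-rate}; the lower bound evaluates the generating function at $z=i$ (your $\theta=\pi/2$), expands $\log G_n(i)-\log G_\infty(i)$ to order $n^{-1}$ (the paper gets $n^{-1}(c^{-4}-2i\alpha_n c^{-2})$, which matches your real part $1/(c^4n)$ and your rounding-dependent imaginary part), applies $|\mathbb E z^X-\mathbb E z^Y|\le 2\,\mathrm{TV}$, and handles $Q_n$ through the unit shift.

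Your general-$\theta$ identity $\operatorname{Re}\Delta_n(\theta)=\sin^2\theta/(c^4n)+O(n^{-2})$ and the parity cancellation at $\theta=\pi$ are correct additions that explain why $z=i$ is the natural test. However, $\theta=\pi/2$ maximizes only that real-part coefficient, not the lower bound after multiplying by $|\varphi_\infty(\theta)|=e^{-(1-\cos\theta)/c^2}$, so calling it ``optimal'' overstates it.
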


\begin{proof}
The upper bound is Corollary~\ref{cor:skellam-explicit-rate}. For the lower bound, write
\[
\alpha_n:=k_n-\pi n,
\]
so that $\alpha_n=O(1)$. Let
\[
G_n(z):=\mathbb E[z^{D_{n,k_n}}],
\qquad
G_\infty(z):=\mathbb E[z^{D}]
=\exp\bigl(\lambda_0(z-1)+\lambda_1(z^{-1}-1)\bigr),
\]
where $D\sim\mathrm{Skellam}(\lambda_0,\lambda_1)$ with
$\lambda_0=(1-\pi)c^{-2}$ and $\lambda_1=\pi c^{-2}$. Under the canonical calibration, with
$u_n:=(c^2 n)^{-1}$ and $\delta_n=u_n/(1+u_n)$,
\[
G_n(z)=\left(\frac{1+u_n z}{1+u_n}\right)^{n-k_n}
\left(\frac{1+u_n z^{-1}}{1+u_n}\right)^{k_n}.
\]
Now evaluate at $z=i$. Using
\[
\log(1\pm iu_n)=\pm i u_n + \frac{u_n^2}{2}+O(u_n^3),
\qquad
\log(1+u_n)=u_n-\frac{u_n^2}{2}+O(u_n^3),
\]
we obtain
\[
\log G_n(i)
=\log G_\infty(i)+\frac1n\left(\frac1{c^4}-\frac{2i\alpha_n}{c^2}\right)+O(n^{-2}),
\]
and therefore
\[
G_n(i)-G_\infty(i)
=\frac{G_\infty(i)}{n}\left(\frac1{c^4}-\frac{2i\alpha_n}{c^2}\right)+O(n^{-2}).
\]
Since $|G_\infty(i)|=e^{-1/c^2}$ and
\[
\left|\frac1{c^4}-\frac{2i\alpha_n}{c^2}\right|\ge \frac1{c^4},
\]
we get, for all sufficiently large $n$,
\[
|G_n(i)-G_\infty(i)|\ge \frac{e^{-1/c^2}}{2c^4}\,\frac1n.
\]
Finally, for any two laws $\mu,\nu$ on $\mathbb Z$ and any $|z|=1$,
\[
\bigl|\mathbb E_\mu z^X-\mathbb E_\nu z^Y\bigr|
\le \sum_{d\in\mathbb Z}|\mu\{d\}-\nu\{d\}|
=2\,\mathrm{TV}(\mu,\nu).
\]
Applying this with $z=i$ yields
\[
\mathrm{TV}(P_n,P_\infty)
\ge \frac12 |G_n(i)-G_\infty(i)|
\ge \frac{e^{-1/c^2}}{4c^4}\,\frac1n.
\]
This proves $\mathrm{TV}(P_n,P_\infty)=\Theta(n^{-1})$. The same argument applies to $Q_n$, noting that
$Q_n$ is the law of $1+A'_{n,k_n}-B'_{n,k_n}$, with the replacement $\alpha_n\mapsto \alpha_n+1$, and
that shifts preserve total variation.
\end{proof}

\begin{corollary}[Privacy curve convergence for the Skellam-shift limit]
\label{cor:skellam-privacy-curve-convergence}
Under the assumptions of Theorem~\ref{thm:skellam-shift}, for every fixed $\varepsilon \geq 0$,
\[
\delta_{Q_n\|P_n}(\varepsilon) \to \delta_{Q_\infty\|P_\infty}(\varepsilon).
\]
Under the canonical calibration $e^{\varepsilon_0(n)} = c^2 n$ and $k = \lfloor \pi n \rfloor$, the convergence is quantitative:
\[
\bigl|\delta_{Q_n\|P_n}(\varepsilon) - \delta_{Q_\infty\|P_\infty}(\varepsilon)\bigr|
\leq (1 + e^\varepsilon)\,\frac{2c^2 + 3}{c^4 n}.
\]
\end{corollary}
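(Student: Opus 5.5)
The plan is to copy the proof of Corollary~\ref{cor:poisson-privacy-curve-convergence}, feeding Lemma~\ref{lem:privacy-curve-stability} with the total-variation bounds from Theorem~\ref{thm:skellam-shift} and Corollary~\ref{cor:skellam-explicit-rate}. One point needs checking first. $P_n$ and $Q_n$ are laws of the centered statistics $D_{n,k}$ and $D'_{n,k}$, not of the raw transcripts $T_{n,k}$ and $T_{n,k+1}$. For RR the histogram is a bijective function of $K_{n,k}$, and $D = K - k$ is a fixed bijection of $K$, the same for both hypotheses. The supremum in \eqref{eq:delta} is invariant under a common bijective relabelling of the sample space. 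Hence $\delta_{Q_n\|P_n}(\varepsilon)$ equals the privacy curve of the actual neighboring shuffle experiment $\mathcal{E}_{n,k}$, and the corollary concerns the intended object.

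\textbf{Qualitative part.} Fix $\varepsilon\ge 0$. Lemma~\ref{lem:privacy-curve-stability} gives
\[
\bigl|\delta_{Q_n\|P_n}(\varepsilon)-\delta_{Q_\infty\|P_\infty}(\varepsilon)\bigr|\le \mathrm{TV}(Q_n,Q_\infty)+e^\varepsilon\,\mathrm{TV}(P_n,P_\infty).
\]
Theorem~\ref{thm:skellam-shift} shows that both total variations tend to $0$, since $n\delta_n^2\to0$ and $\lambda_{0,n},\lambda_{1,n},\lambda'_{0,n},\lambda'_{1,n}$ converge to $\lambda_0,\lambda_1$. Because $\varepsilon$ is fixed, the factor $e^\varepsilon$ is a constant and the right-hand side vanishes.

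\textbf{Quantitative part.} Under $e^{\varepsilon_0(n)}=c^2n$ and $k=\lfloor\pi n\rfloor$, the proof of Corollary~\ref{cor:skellam-explicit-rate} gives the bound $(2c^2+3)/(c^4n)$ separately for $\mathrm{TV}(P_n,P_\infty)$ and for $\mathrm{TV}(Q_n,Q_\infty)$. I will use these two separate bounds, not just the Le Cam distance bound, because Lemma~\ref{lem:privacy-curve-stability} needs each TV term on its own. Substituting both into the display gives $(1+e^\varepsilon)(2c^2+3)/(c^4 n)$, which is the claim.

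\textbf{Main obstacle.} The argument is routine, so the only real task is the bookkeeping in the alternative case. I must confirm that $|\lambda'_{0,n}-\lambda_0|$ and $|\lambda'_{1,n}-\lambda_1|$ are each at most $(c^2+1)/(c^4n)$, where the shift by one user changes $r_n$ to $r_n\pm1$. I would verify this directly:
\[
\lambda'_{0,n}-\lambda_0=\frac{c^2(r_n-1)-(1-\pi)}{c^2(1+c^2n)}.
\]
Its numerator has absolute value at most $c^2+1$, and the same holds for $\lambda'_{1,n}$. So the $Q_n$ bound matches the $P_n$ bound, which is what the constant in the statement requires.
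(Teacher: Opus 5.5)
Your proposal is correct and is the paper's argument: apply Lemma~\ref{lem:privacy-curve-stability} with the separate total-variation bounds for $P_n$ and $Q_n$ from Theorem~\ref{thm:skellam-shift}, and under the canonical calibration use the per-hypothesis bounds $(2c^2+3)/(c^4n)$ established in the proof of Corollary~\ref{cor:skellam-explicit-rate}. Your explicit check of the $\lambda'_{0,n},\lambda'_{1,n}$ terms and of invariance under the common relabelling $K\mapsto K-k$ are both correct; they are bookkeeping the paper leaves implicit.
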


\begin{proof}
Apply Lemma~\ref{lem:privacy-curve-stability} with the TV bound of Corollary~\ref{cor:skellam-explicit-rate}.
\end{proof}

\subsection{Skellam pmf and limiting privacy curve}

\begin{remark}[Skellam pmf]
If $D \sim \mathrm{Skellam}(\lambda_0, \lambda_1)$ with $\lambda_0, \lambda_1 > 0$, then for every $d \in \mathbb{Z}$,
\[
P(D = d) = e^{-(\lambda_0 + \lambda_1)} \left(\frac{\lambda_0}{\lambda_1}\right)^{d/2} I_{|d|}\!\left(2\sqrt{\lambda_0 \lambda_1}\right),
\]
where $I_\nu(\cdot)$ is the modified Bessel function of the first kind. This explicit pmf is not needed for the convergence
proofs, but it is convenient for numerical evaluation of the limiting privacy curve.
\end{remark}

\begin{corollary}[Limiting privacy curve; no interior $\delta$-floor]
\label{cor:skellam-curve}
Let $D \sim \mathrm{Skellam}(\lambda_0, \lambda_1)$ and write $p_\infty(d) := P(D = d)$. Let
$Q_\infty$ be the law of $1 + D$, so $Q_\infty(d) = p_\infty(d-1)$. Then for every
$\varepsilon \geq 0$,
\begin{equation}
\delta_{Q_\infty\|P_\infty}(\varepsilon) = \sum_{d \in \mathbb{Z}} \bigl(p_\infty(d-1) - e^\varepsilon p_\infty(d)\bigr)_+. \label{eq:Sk_curve}
\end{equation}
If $\pi \in (0,1)$ then $\lambda_0, \lambda_1 > 0$ and $p_\infty(d) > 0$ for all $d \in \mathbb{Z}$, hence there is no two-sided support-mismatch floor
(contrast Proposition~\ref{prop:poisson-curve}). At boundary compositions $\pi \in \{0,1\}$ one of $\lambda_0, \lambda_1$ vanishes and the limit reduces
to the Poisson-shift experiment of Section~3.
\end{corollary}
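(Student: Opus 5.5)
The plan is to read off the curve formula \eqref{eq:Sk_curve} directly from the discrete Neyman--Pearson identity \eqref{eq:NP2} of Lemma~2.1, and then to treat the support question and the boundary reduction separately.

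Both $P_\infty=\mathcal L(D)$ and $Q_\infty=\mathcal L(1+D)$ live on the countable space $\mathbb Z$. The pmf of $Q_\infty$ is $q(d)=P(1+D=d)=p_\infty(d-1)$. Substituting $p=p_\infty$ and $q(d)=p_\infty(d-1)$ into \eqref{eq:NP2} gives \eqref{eq:Sk_curve} at once. Since \eqref{eq:NP2} does not require absolute continuity, no support hypothesis is needed for this part.

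Next, for $\pi\in(0,1)$ and $c\in(0,\infty)$ we have $\lambda_0=(1-\pi)/c^2>0$ and $\lambda_1=\pi/c^2>0$. For any $d\in\mathbb Z$, write $d=x-y$ with $x=\max(d,0)$ and $y=\max(-d,0)$. By independence,
\[
p_\infty(d)\ge P(X=x)\,P(Y=y)=e^{-\lambda_0}\frac{\lambda_0^x}{x!}\,e^{-\lambda_1}\frac{\lambda_1^y}{y!}>0 .
\]
Alternatively, one can note that the Bessel expression in the Skellam pmf remark is positive. Hence $P_\infty$ and $Q_\infty$ are mutually absolutely continuous with full support $\mathbb Z$. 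Consequently, no set $A$ has $Q_\infty(A)=0<P_\infty(A)$ or the reverse. I would make the ``no floor'' claim precise as $\lim_{\varepsilon\to\infty}\delta_{\mathrm{two}}(\varepsilon)=0$. This follows from $\delta_{Q\|P}(\varepsilon)=E_P[(L-e^\varepsilon)_+]\le E_P[L\mathbf 1\{L>e^\varepsilon\}]=Q(L>e^\varepsilon)$. Since $L<\infty$ $P$-a.s. and hence $Q$-a.s. (by mutual absolute continuity), this tends to $0$. The same argument applies to $\delta_{P\|Q}$. This contrasts with \eqref{eq:floor}, where the atom $Q_\infty(\{0\})=0$ forces a positive floor.

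Finally, for the boundary, take $\pi=0$. Then $\lambda_1=0$, so $Y\equiv0$ and $D\sim\mathrm{Poi}(1/c^2)$. The experiment becomes $(\mathrm{Poi}(\lambda),1+\mathrm{Poi}(\lambda))$ with $\lambda=c^{-2}$, which is exactly $\mathcal E^{\mathrm{Poi}}_\infty$ of Theorem~\ref{thm:poisson-shift}. For $\pi=1$, we have $\lambda_0=0$ and $D=-Y$ with $Y\sim\mathrm{Poi}(1/c^2)$. Applying the bijection $d\mapsto 1-d$ maps $(\mathcal L(-Y),\mathcal L(1-Y))$ to $(\mathcal L(1+Y),\mathcal L(Y))$. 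This is the Poisson-shift pair with the roles of null and alternative swapped. Since bijections preserve the privacy curves, the pair is equivalent to $\mathcal E^{\mathrm{Poi}}_\infty$ up to this relabeling, and it exhibits the same two-sided floor $e^{-\lambda}$.

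I expect the only mildly delicate point to be this $\pi=1$ reduction: one must note that ``reduces to'' means equivalence modulo swapping hypotheses under a relabeling. Everything else is immediate from Lemma~2.1.
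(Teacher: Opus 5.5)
Your proposal is correct and follows the paper's proof. The series is \eqref{eq:NP2} on $\mathbb Z$ with $Q_\infty(d)=p_\infty(d-1)$; positivity comes from a single strictly positive convolution term (the paper's index $m=\max\{0,-d\}$ is exactly your choice $x=\max(d,0)$, $y=\max(-d,0)$); and the boundary case is the reduction $D=\pm\mathrm{Poi}(c^{-2})$. Your two additions are both valid. First, the bound $\delta_{Q\|P}(\varepsilon)\le Q(L>e^\varepsilon)\to 0$ makes ``no floor'' precise as $\delta_{\mathrm{two}}(\varepsilon)\to 0$. Second, at $\pi=1$ the map $d\mapsto 1-d$ matches the Poisson-shift pair only after swapping null and alternative, so there $\delta_{Q_\infty\|P_\infty}$ is the reverse Poisson curve, while the two-sided curve and the floor $e^{-\lambda}$ coincide. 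Both refine points the paper's ``affine reparametrization'' leaves implicit.
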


\begin{proof}
The series form \eqref{eq:Sk_curve} is the countable-space identity \eqref{eq:NP2} in Lemma~2.1 applied on $\mathbb{Z}$ with $Q_\infty(d) = p_\infty(d-1)$. If $\lambda_0, \lambda_1 > 0$, then for each $d \in \mathbb{Z}$,
\[
p_\infty(d) = P(X - Y = d) = \sum_{m \geq \max\{0,-d\}} P(X = m+d)\, P(Y = m),
\]
where $X \sim \mathrm{Poi}(\lambda_0)$ and $Y \sim \mathrm{Poi}(\lambda_1)$ are independent. Each summand is strictly positive and at least one
term appears in the sum, hence $p_\infty(d) > 0$ for all $d$, so $P_\infty$ and $Q_\infty$ share full support on $\mathbb{Z}$ and there is
no support-mismatch floor. If $\pi \in \{0,1\}$, then one of $\lambda_0, \lambda_1$ equals 0 and $D$ becomes $\pm\mathrm{Poi}(\lambda)$ for $\lambda = c^{-2}$,
which is equivalent to the Poisson-shift experiment after an affine reparametrization.
\end{proof}

\begin{remark}[Continuity of the Skellam limit at boundary compositions]
\label{rem:skellam-boundary-continuity}
The reduction to the Poisson-shift experiment at $\pi\in\{0,1\}$ stated in Corollary~\ref{cor:skellam-curve}
is continuous in the following sense. Fix the canonical calibration and let $\pi\in(0,1)$.
The total-variation bounds \eqref{eq:TVPn_Sk}--\eqref{eq:TVQn_Sk} depend on $\pi$ only through
$\lambda_0=(1-\pi)/c^2$ and $\lambda_1=\pi/c^2$. As $\pi\to 0$, we have $\lambda_1\to 0$, hence
$Y\sim\mathrm{Poi}(\lambda_1)$ vanishes in probability and
$D=X-Y\sim\mathrm{Skellam}(\lambda_0,\lambda_1)$ converges in total variation to
$\mathrm{Poi}(\lambda_0)$. Equivalently, in the centered decomposition
$K_{n,k}=k+D_{n,k}$ and $K_{n,k+1}=k+D'_{n,k}$ from \eqref{eq:Dnk}--\eqref{eq:Dnk_alt}, one gets
$D_{n,k}\Rightarrow \mathrm{Poi}(\lambda_0)$ and $D'_{n,k}\Rightarrow 1+\mathrm{Poi}(\lambda_0)$, exactly the
Poisson-shift experiment of Section~3. As $\pi\to 1$, the symmetric statement gives
$\mathrm{Skellam}(\lambda_0,\lambda_1)\to -\mathrm{Poi}(\lambda_1)$, which is the same scalar
Poisson-shift family after the reflection $d\mapsto -d$. Thus the three-regime diagram of
Section~6 is continuous at the boundary compositions: the Skellam family interpolates between
the two Poisson-shift families without a jump in the Le Cam distance.
\end{remark}

\begin{remark}[Monotonicity of the Skellam-shift limiting curve]
Fix $\pi \in (0,1)$ and write $(P_c, Q_c)$ for the Skellam-shift experiment of Theorem~\ref{thm:skellam-shift} with $\lambda_0(c) = (1-\pi)/c^2$ and $\lambda_1(c) = \pi/c^2$.
\begin{enumerate}
\item[(i)] For fixed $c > 0$, the one-sided curve $\varepsilon \mapsto \delta_{Q_c\|P_c}(\varepsilon)$ is strictly decreasing on $[0,\infty)$. Indeed, if
$L_c(d) := Q_c(d)/P_c(d) = p_c(d-1)/p_c(d)$, then $L_c(d) \to \infty$ as $d \to \infty$, so the same argument
as in Proposition~\ref{prop:poisson-monotonicity}(i) applies.
\end{enumerate}
Monotonicity in $c$ and for the two-sided curve is established in Proposition~\ref{prop:skellam-monotonicity} below.
\end{remark}

\begin{proposition}[Monotonicity in the signal-to-noise parameter for the Skellam-shift limiting curve]
\label{prop:skellam-monotonicity}
Fix $\pi\in(0,1)$ and write $(P_c,Q_c)$ for the Skellam-shift experiment of Theorem~\ref{thm:skellam-shift} with
\[
\lambda_0(c)=\frac{1-\pi}{c^2},\qquad \lambda_1(c)=\frac{\pi}{c^2}.
\]
Then for every $\varepsilon\ge 0$ and every $0<c_1<c_2$,
\[
\delta_{Q_{c_1}\|P_{c_1}}(\varepsilon)\le \delta_{Q_{c_2}\|P_{c_2}}(\varepsilon),
\qquad
\delta_{P_{c_1}\|Q_{c_1}}(\varepsilon)\le \delta_{P_{c_2}\|Q_{c_2}}(\varepsilon).
\]
Consequently,
\[
\delta_{\mathrm{two},c_1}(\varepsilon)\le \delta_{\mathrm{two},c_2}(\varepsilon).
\]
Equivalently, smaller $c$ (more noise) improves privacy.
\end{proposition}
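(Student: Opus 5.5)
The plan is to realize the less-informative experiment $(P_{c_1},Q_{c_1})$ as a Markov-kernel garbling of the more-informative one $(P_{c_2},Q_{c_2})$, and then use the fact that privacy curves are monotone under post-processing. Set $\lambda_i(c)$ as in the statement and note that for $c_1<c_2$ we have $\lambda_0(c_1)>\lambda_0(c_2)$ and $\lambda_1(c_1)>\lambda_1(c_2)$. Write $\eta_0:=\lambda_0(c_1)-\lambda_0(c_2)>0$ and $\eta_1:=\lambda_1(c_1)-\lambda_1(c_2)>0$.

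By Poisson additivity, if $D\sim\mathrm{Skellam}(\lambda_0(c_2),\lambda_1(c_2))$ and $Z\sim\mathrm{Skellam}(\eta_0,\eta_1)$ are independent, then
\[
D+Z\sim\mathrm{Skellam}(\lambda_0(c_1),\lambda_1(c_1)),\qquad 1+D+Z\sim 1+\mathrm{Skellam}(\lambda_0(c_1),\lambda_1(c_1)).
\]
Hence the kernel $K(d,\cdot):=\mathcal L(d+Z)$, which is a convolution independent of the hypothesis, maps $P_{c_2}\mapsto P_{c_1}$ and $Q_{c_2}\mapsto Q_{c_1}$ simultaneously. This is the key step. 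It uses only that both Poisson rates scale as $c^{-2}$ with fixed proportions, so both decrease in $c$ and the shift $+1$ commutes with convolution.

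Next I will prove the data-processing inequality for the privacy curve: for any Markov kernel $K$ and any $\varepsilon\ge0$,
\[
\delta_{QK\|PK}(\varepsilon)\le\delta_{Q\|P}(\varepsilon).
\]
The argument is short. For any set $A$, the function $\varphi(d):=K(d,A)\in[0,1]$ is a randomized test with
\[
QK(A)-e^\varepsilon PK(A)=E_Q\varphi-e^\varepsilon E_P\varphi .
\]
As in the proof of Proposition~\ref{prop:poisson-tradeoff}, the objective is linear in $\varphi$, so its supremum over randomized tests equals the supremum over indicator sets, which is $\delta_{Q\|P}(\varepsilon)$. The same argument with the roles of $P$ and $Q$ swapped gives the reverse inequality $\delta_{P_{c_1}\|Q_{c_1}}\le\delta_{P_{c_2}\|Q_{c_2}}$, since the same kernel works for both directions. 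Taking maxima then yields the two-sided claim.

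The only real obstacle is recognizing the garbling structure; once the convolution kernel is identified, everything else is routine. A direct attack via the series \eqref{eq:Sk_curve} would require Bessel-function monotonicity, and this approach avoids that entirely. I will also remark that the same kernel argument reproves the non-strict part of Proposition~\ref{prop:poisson-monotonicity}(ii), using $\mathrm{Poi}$ noise.
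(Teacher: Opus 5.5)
Your proposal is correct and follows essentially the same route as the paper. The paper also writes $D_{t_1}=D_{t_2}+R$ with independent $R\sim\mathrm{Skellam}((1-\pi)(t_1-t_2),\pi(t_1-t_2))$, so a single additive-noise kernel maps $(P_{c_2},Q_{c_2})$ to $(P_{c_1},Q_{c_1})$; it then applies data processing in both directions and takes the maximum. The only difference is cosmetic: the paper checks data processing directly on the series formula, using $\bigl(\sum_r a_r\bigr)_+\le\sum_r (a_r)_+$ and the substitution $u=d-r$, whereas you use the linearity-in-the-test argument, which is equally valid.
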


\begin{proof}
Set $t_i:=c_i^{-2}$, so $t_1>t_2$. For $i=1,2$, let
\[
D_{t_i}=X_i-Y_i,
\qquad
X_i\sim \mathrm{Poi}((1-\pi)t_i),
\qquad
Y_i\sim \mathrm{Poi}(\pi t_i),
\]
with $X_i$ and $Y_i$ independent. Thus $D_{t_i}\sim \mathrm{Skellam}((1-\pi)t_i,\pi t_i)$, so
$P_{c_i}=\mathcal L(D_{t_i})$ and $Q_{c_i}=\mathcal L(1+D_{t_i})$.

To couple the two parameters explicitly, let
\[
U\sim \mathrm{Poi}((1-\pi)(t_1-t_2)),
\qquad
V\sim \mathrm{Poi}(\pi(t_1-t_2)),
\]
independent of $(X_2,Y_2)$, and define
\[
R:=U-V.
\]
Then
\[
R\sim \mathrm{Skellam}((1-\pi)(t_1-t_2),\pi(t_1-t_2)),
\qquad
R\ \text{is independent of } D_{t_2}.
\]
Now set $X_1:=X_2+U$ and $Y_1:=Y_2+V$. By additivity of independent Poisson variables,
\[
X_1\sim \mathrm{Poi}((1-\pi)t_1),
\qquad
Y_1\sim \mathrm{Poi}(\pi t_1),
\]
and therefore
\[
D_{t_1}=X_1-Y_1=(X_2-Y_2)+(U-V)=D_{t_2}+R.
\]
So $(P_{c_1},Q_{c_1})$ is obtained from $(P_{c_2},Q_{c_2})$ by adding the same independent
Skellam noise to both hypotheses. Equivalently, if
\[
K(d,A):=\mathbb P(d+R\in A),\qquad d\in\mathbb Z,\ A\subseteq \mathbb Z,
\]
then $K$ is a Markov kernel on $\mathbb Z$ and
\[
P_{c_1}=P_{c_2}K,
\qquad
Q_{c_1}=Q_{c_2}K.
\]

Let
\[
p_i(d):=P_{c_i}\{d\},
\qquad
q_i(d):=Q_{c_i}\{d\}=p_i(d-1),
\qquad
\rho(r):=\mathbb P(R=r).
\]
The kernel representation gives the convolution identities
\[
p_1(d)=\sum_{r\in\mathbb Z} p_2(d-r)\rho(r),
\qquad
q_1(d)=\sum_{r\in\mathbb Z} q_2(d-r)\rho(r)
      =\sum_{r\in\mathbb Z} p_2(d-1-r)\rho(r).
\]

We now write out data processing for privacy curves using the countable-space series formula
(compare \eqref{eq:NP2}; in the present one-dimensional shift setting this is exactly the Skellam
series \eqref{eq:Sk_curve}). For the forward curve,
\begin{align*}
\delta_{Q_{c_1}\|P_{c_1}}(\varepsilon)
&= \sum_{d\in\mathbb Z}\bigl(q_1(d)-e^{\varepsilon}p_1(d)\bigr)_+ \\
&= \sum_{d\in\mathbb Z}\Bigl(\sum_{r\in\mathbb Z}\rho(r)
   \bigl[p_2(d-1-r)-e^{\varepsilon}p_2(d-r)\bigr]\Bigr)_+ \\
&\le \sum_{d\in\mathbb Z}\sum_{r\in\mathbb Z}\rho(r)
   \bigl(p_2(d-1-r)-e^{\varepsilon}p_2(d-r)\bigr)_+
   \qquad\text{since }\bigl(\sum_r a_r\bigr)_+\le \sum_r (a_r)_+ \\
&= \sum_{r\in\mathbb Z}\rho(r)
   \sum_{d\in\mathbb Z}\bigl(p_2(d-1-r)-e^{\varepsilon}p_2(d-r)\bigr)_+ \\
&= \sum_{r\in\mathbb Z}\rho(r)
   \sum_{u\in\mathbb Z}\bigl(p_2(u-1)-e^{\varepsilon}p_2(u)\bigr)_+
   \qquad (u=d-r) \\
&= \sum_{r\in\mathbb Z}\rho(r)\,\delta_{Q_{c_2}\|P_{c_2}}(\varepsilon)
 = \delta_{Q_{c_2}\|P_{c_2}}(\varepsilon),
\end{align*}
as claimed.

The reverse curve is handled in exactly the same way:
\begin{align*}
\delta_{P_{c_1}\|Q_{c_1}}(\varepsilon)
&= \sum_{d\in\mathbb Z}\bigl(p_1(d)-e^{\varepsilon}q_1(d)\bigr)_+ \\
&= \sum_{d\in\mathbb Z}\Bigl(\sum_{r\in\mathbb Z}\rho(r)
   \bigl[p_2(d-r)-e^{\varepsilon}p_2(d-1-r)\bigr]\Bigr)_+ \\
&\le \sum_{d\in\mathbb Z}\sum_{r\in\mathbb Z}\rho(r)
   \bigl(p_2(d-r)-e^{\varepsilon}p_2(d-1-r)\bigr)_+ \\
&= \sum_{r\in\mathbb Z}\rho(r)
   \sum_{u\in\mathbb Z}\bigl(p_2(u)-e^{\varepsilon}p_2(u-1)\bigr)_+
   \qquad (u=d-r) \\
&= \sum_{r\in\mathbb Z}\rho(r)\,\delta_{P_{c_2}\|Q_{c_2}}(\varepsilon)
 = \delta_{P_{c_2}\|Q_{c_2}}(\varepsilon).
\end{align*}
Therefore
\[
\delta_{\mathrm{two},c_1}(\varepsilon)
= \max\{\delta_{Q_{c_1}\|P_{c_1}}(\varepsilon),\delta_{P_{c_1}\|Q_{c_1}}(\varepsilon)\}
\le
\max\{\delta_{Q_{c_2}\|P_{c_2}}(\varepsilon),\delta_{P_{c_2}\|Q_{c_2}}(\varepsilon)\}
= \delta_{\mathrm{two},c_2}(\varepsilon).
\]
This is exactly the data-processing inequality for privacy curves, written out explicitly for the
additive-noise kernel $K$.
\end{proof}

\section{General alphabets: multivariate compound-Poisson / independent Poisson vector limit}

We now allow a general finite output alphabet $\mathcal{Y}$ and consider $n$-dependent local randomizers $W^{(n)}$
that become increasingly concentrated on dominant outputs as $n \to \infty$.

\subsection{Sparse-error critical regime}

\begin{definition}[Sparse-error critical regime]
\label{def:sparse-error}
We say that $W^{(n)}$ is in the \emph{sparse-error critical regime} with dominant outputs $(y_0, y_1)$ if $y_0 \neq y_1$ and there exist nonnegative intensities $\{\alpha_0(y)\}_{y \neq y_0}$
and $\{\alpha_1(y)\}_{y \neq y_1}$ such that, as $n \to \infty$,
\[
n W^{(n)}_0(y) \to \alpha_0(y) \quad (y \neq y_0), \qquad n W^{(n)}_1(y) \to \alpha_1(y) \quad (y \neq y_1),
\]
and simultaneously $W^{(n)}_0(y_0) \to 1$ and $W^{(n)}_1(y_1) \to 1$.
\end{definition}

\begin{remark}[Limits of the ``single dominant output'' assumption]
Definition~\ref{def:sparse-error} assumes a single dominant output under each input. This covers binary RR (with $\mathcal{Y} = \{0,1\}$) and more generally
nearly-deterministic channels where all non-dominant outputs occur with probability $\Theta(1/n)$. It
does not directly cover channels with multiple dominant outputs whose probabilities stay bounded
away from 0 as $n \to \infty$ (e.g.\ multiway randomized response with several outputs of comparable
mass). Extending the critical non-Gaussian limit to such multi-dominant families naturally leads to
a L\'{e}vy--Khintchine type decomposition in which Gaussian and compound-Poisson components may
coexist.
\end{remark}

\subsection{Two dominant outputs: the first Gaussian / compound-Poisson decomposition}

The single-dominant regime of Definition~\ref{def:sparse-error} is the special case where all $O(1)$ mass under each
input collapses onto one atom. If, instead, the $O(1)$ mass is split between two dominant outputs,
then the dominant block fluctuates on the $\sqrt n$ scale and is therefore Gaussian, while the truly
non-dominant outputs still occur with probability $\Theta(1/n)$ and generate an $O(1)$ jump field.
The resulting boundary object is the first hybrid Gaussian / compound-Poisson limit, i.e. the first
nontrivial instance of a L\'evy--Khintchine type decomposition in the present shuffle setting.

The next proposition identifies the weak hybrid Gaussian / compound-Poisson limit in the 
two-dominant regime. While the weak convergence established here does not by itself yield 
total-variation or Le Cam convergence (see Remark~\ref{rem:convergence-modes}), 
Appendix~\ref{app:hybrid-privacy} shows that the privacy curves of the full experiment 
nevertheless converge, with an explicit $O(n^{-1/2})$ rate for interior compositions 
$\pi\in(0,1)$.

\begin{definition}[Two-dominant sparse-error critical regime]
\label{def:two-dominant-sparse-error}
Fix, for each $b\in\{0,1\}$, a dominant pair
\[
D_b:=\{y_{ba},y_{bb}\}\subseteq \mathcal Y,
\qquad
p_b\in(0,1),
\]
with $y_{ba}\neq y_{bb}$. We say that the triangular array of local randomizers $W^{(n)}$ is in the
\emph{two-dominant sparse-error critical regime} if, for each $b\in\{0,1\}$,
\[
W_b^{(n)}(y_{ba})=p_b+O(n^{-1}),
\qquad
W_b^{(n)}(y_{bb})=1-p_b+O(n^{-1}),
\]
and, for every $y\notin D_b$,
\[
nW_b^{(n)}(y)\to \alpha_b(y)\in[0,\infty).
\]
Equivalently, the total non-dominant mass is of order $n^{-1}$, whereas the split between the two dominant
outputs is of order one.
\end{definition}

For the statements below we impose the simplifying disjointness assumption $D_0\cap D_1=\varnothing$. The overlapping case requires additional bookkeeping and is not pursued here.

For $b\in\{0,1\}$, write
\[
\mu_b:=p_b e_{y_{ba}}+(1-p_b)e_{y_{bb}}\in \mathbb R^{\mathcal Y},
\qquad
g_b:=e_{y_{ba}}-e_{y_{bb}}\in \mathbb R^{\mathcal Y},
\]
and let
\[
M:=\mathrm{span}\{g_0,g_1\}\subseteq \mathbb R^{\mathcal Y}.
\]
Let $\Pi_G$ be the orthogonal projection onto $M$, and let $\Pi_J:=I-\Pi_G$ be the orthogonal projection
onto $M^\perp$. Thus $\Pi_G$ extracts the dominant $\sqrt n$-fluctuations, whereas $\Pi_J$ kills the
pure dominant block and retains only the $O(1)$ jump component.

Given a composition sequence $k_n$ with $\pi_n:=k_n/n\to\pi\in[0,1]$, define the centered released histogram
\[
\widehat H_{n,k_n}:=N_{n,k_n}-(n-k_n)\mu_0-k_n\mu_1\in \mathbb R^{\mathcal Y},
\]
and the hybrid normalized statistic
\[
S_n:=\Bigl(n^{-1/2}\Pi_G\widehat H_{n,k_n},\, \Pi_J\widehat H_{n,k_n}\Bigr)
\in M\times M^\perp.
\]
Under the neighboring alternative $T_{n,k_n+1}$ we use the \emph{same} centering by $(n-k_n)\mu_0+k_n\mu_1$.

\begin{proposition}[Hybrid Gaussian / compound-Poisson limit]
\label{prop:hybrid-gaussian-compound-poisson}
Assume the two-dominant sparse-error critical regime above. Also assume
$D_0\cap D_1=\varnothing$. Finally suppose $\pi_n\to\pi\in[0,1]$. Define the Gaussian
covariance operator
\[
\Sigma:=(1-\pi)p_0(1-p_0)\,g_0g_0^{\top}+\pi p_1(1-p_1)\,g_1g_1^{\top}
\qquad \text{on } M,
\]
and, for each $y\notin D_b$, the jump vector
\[
j_{b,y}:=\Pi_J(e_y-\mu_b)\in M^\perp.
\]
Let $\nu$ be the finite measure on $M^\perp$ given by
\[
\nu
:=\sum_{y\notin D_0}(1-\pi)\alpha_0(y)\,\delta_{j_{0,y}}
 +\sum_{y\notin D_1}\pi\alpha_1(y)\,\delta_{j_{1,y}}.
\]
Let $G\sim N(0,\Sigma)$ and let $J$ be an independent compound-Poisson random vector with L\'evy measure
$\nu$, equivalently
\[
J\overset{d}=\sum_{y\notin D_0}U_y j_{0,y}+\sum_{y\notin D_1}V_y j_{1,y},
\]
where the coordinates are independent and
\[
U_y\sim \mathrm{Poi}((1-\pi)\alpha_0(y)),
\qquad
V_y\sim \mathrm{Poi}(\pi\alpha_1(y)).
\]
Set
\[
\Delta:=\Pi_J(\mu_1-\mu_0)\in M^\perp.
\]
Let $P_n$ be the law of $S_n$ under $T_{n,k_n}$, and let $Q_n$ be the law of the same statistic $S_n$
under $T_{n,k_n+1}$. Then
\[
P_n\Longrightarrow P_\infty:=\mathcal L(G,J),
\qquad
Q_n\Longrightarrow Q_\infty:=\mathcal L(G,J+\Delta).
\]
Equivalently, for $u\in M$ and $v\in M^\perp$,
\[
\mathbb E\exp\bigl(i\langle u,G\rangle+i\langle v,J\rangle\bigr)
=\exp\!\left(-\frac12\langle u,\Sigma u\rangle
+\int_{M^\perp}(e^{i\langle v,z\rangle}-1)\,\nu(dz)\right),
\]
whereas under the neighboring alternative the limiting characteristic function is multiplied by the shift factor
$e^{i\langle v,\Delta\rangle}$.

In particular, the limiting normalized experiment factors into a Gaussian component on the dominant block and an
independent compound-Poisson component on the rare-jump block.
\end{proposition}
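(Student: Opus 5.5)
The plan is to prove the joint convergence through characteristic functions and L\'evy's continuity theorem. We use the exact product structure: under $T_{n,k_n}$ the histogram is $N=\sum_{i\le n-k_n}e_{Y_i}+\sum_{i>n-k_n}e_{Y_i}$, a sum of independent indicator vectors. Hence, for $u\in M$ and $v\in M^\perp$,
\[
\mathbb E\, e^{i\langle (u,v),S_n\rangle}
=\prod_{b\in\{0,1\}}\phi_{b,n}(u,v)^{m_b},
\qquad
\phi_{b,n}(u,v):=\sum_{y}W_b^{(n)}(y)\,e^{i\langle n^{-1/2}\Pi_G u+\Pi_J v,\;e_y-\mu_b\rangle},
\]
where $m_0=n-k_n$ and $m_1=k_n$. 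Because $\Pi_G,\Pi_J$ are orthogonal projections with $u\in M$ and $v\in M^\perp$, the exponent is $\langle u,e_y-\mu_b\rangle n^{-1/2}+\langle v,e_y-\mu_b\rangle$. Under the alternative $T_{n,k_n+1}$, one factor $\phi_{0,n}$ is replaced by the corresponding factor for a $1$-user that is still centered at $\mu_0$. That factor equals $e^{i\langle n^{-1/2}u+v,\,\mu_1-\mu_0\rangle}\phi_{1,n}$. Its limit is $e^{i\langle v,\Delta\rangle}$, because the $u$-part is $O(n^{-1/2})$ and $\langle v,\mu_1-\mu_0\rangle=\langle v,\Pi_J(\mu_1-\mu_0)\rangle$. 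The ratio $\phi_{1,n}/\phi_{0,n}\to1$, as the next step shows. So the alternative statement follows from the null statement, and we only need to analyse $\phi_{b,n}$.

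The key step is expanding $\log\phi_{b,n}$ to order $1/n$. We split the sum over $y$ into the dominant pair $D_b$ and the rest.

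On $D_b$, note that $e_{y_{ba}}-\mu_b=(1-p_b)g_b$ and $e_{y_{bb}}-\mu_b=-p_b g_b$. Since $g_b\in M$, we have $\langle v,g_b\rangle=0$, so the $v$-part vanishes exactly on the dominant block. This is exactly why $\Pi_J$ was introduced. The dominant contribution is therefore $\sum_{y\in D_b}W_b^{(n)}(y)\exp(i n^{-1/2}\langle u,e_y-\mu_b\rangle)$. A second-order Taylor expansion, using the $O(n^{-1})$ errors in $W_b^{(n)}(y_{ba}),W_b^{(n)}(y_{bb})$, gives
\[
1-\rho_{b,n}+i n^{-1/2}c_{b,n}\langle u,g_b\rangle-\frac{p_b(1-p_b)}{2n}\langle u,g_b\rangle^2+o(n^{-1}).
\]
Here $\rho_{b,n}=\sum_{y\notin D_b}W^{(n)}_b(y)$ is the non-dominant mass, and $c_{b,n}=(1-p_b)W_b^{(n)}(y_{ba})-p_bW_b^{(n)}(y_{bb})$.

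On the rare outputs $y\notin D_b$, each term is $W_b^{(n)}(y)e^{i\langle v,e_y-\mu_b\rangle}(1+O(n^{-1/2}))$. Since $\langle v,e_y-\mu_b\rangle=\langle v,j_{b,y}\rangle$, this block contributes $n^{-1}\sum_{y\notin D_b}\alpha_b(y)e^{i\langle v,j_{b,y}\rangle}+o(n^{-1})$.

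Adding the two blocks and taking logarithms yields
\[
m_b\log\phi_{b,n}\to-\tfrac12\pi_b'\,p_b(1-p_b)\langle u,g_b\rangle^2+\pi_b'\sum_{y\notin D_b}\alpha_b(y)\bigl(e^{i\langle v,j_{b,y}\rangle}-1\bigr),
\]
with $\pi_0'=1-\pi$ and $\pi_1'=\pi$, provided the first-order drift is negligible. Summing over $b$ then reproduces $-\tfrac12\langle u,\Sigma u\rangle+\int(e^{i\langle v,z\rangle}-1)\,\nu(dz)$. The compound-Poisson representation with independent $U_y,V_y$ follows by multiplying the Poisson characteristic functions.

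The main obstacle is the first-order drift term $m_b\, n^{-1/2} c_{b,n}\langle u,g_b\rangle$, together with the rare-mass correction inside it. By exact centering, $\sum_y W_b^{(n)}(y)(e_y-\mu_b)=\mathbb E[e_Y]-\mu_b$, which is $O(n^{-1})$ by assumption. Hence $c_{b,n}\langle u,g_b\rangle$ equals the $u$-component of this mean, which is $O(n^{-1})$. After multiplying by $m_b n^{-1/2}$ it is $O(n^{-1/2})\to0$.

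I would verify this carefully, because the rare outputs also shift the mean along $M$ through $\Pi_G(e_y-\mu_b)$. Those contributions are $O(n^{-1})\cdot n^{-1/2}$ per user, hence $O(n^{-1/2})$ in total, so they vanish as well. The remaining error terms are controlled uniformly on compact sets of $(u,v)$ via $|e^{ix}-1-ix+x^2/2|\le|x|^3/6$ and $|\log(1+z)-z|\le|z|^2$ for small $|z|$. L\'evy's continuity theorem then gives $P_n\Rightarrow P_\infty$ and $Q_n\Rightarrow Q_\infty$. The factorization of the limiting characteristic function shows that $G$ and $J$ are independent.
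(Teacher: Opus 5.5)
Your proposal is correct and follows essentially the same route as the paper. Both arguments take a product of one-user characteristic factors, expand the dominant pair to second order (where the $v$-part vanishes because $g_b\in M$) and the rare outputs to first order, multiply by $m_b$, apply L\'evy's continuity theorem, and obtain the alternative from the deterministic $\mu_1-\mu_0$ shift. Your explicit bound $c_{b,n}=O(n^{-1})$ on the linear drift, which then contributes only $O(n^{-1/2})$ after summation, is slightly more careful than the paper's remark that the linear term simply cancels.
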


\begin{proof}
Let $m_{0,n}:=n-k_n$ and $m_{1,n}:=k_n$. Under $T_{n,k_n}$ the released histogram is
\[
N_{n,k_n}=\sum_{i=1}^{m_{0,n}} e_{Y^{(0)}_{i,n}}+\sum_{j=1}^{m_{1,n}} e_{Y^{(1)}_{j,n}},
\]
where $Y^{(b)}_{\ell,n}\sim W_b^{(n)}$ independently. Hence
\[
\widehat H_{n,k_n}
=\sum_{i=1}^{m_{0,n}}X^{(0)}_{i,n}+\sum_{j=1}^{m_{1,n}}X^{(1)}_{j,n},
\qquad
X^{(b)}_{\ell,n}:=e_{Y^{(b)}_{\ell,n}}-\mu_b.
\]
So $S_n$ is the sum of independent triangular-array increments
\[
\xi^{(b)}_{\ell,n}:=\Bigl(n^{-1/2}\Pi_G X^{(b)}_{\ell,n},\,\Pi_J X^{(b)}_{\ell,n}\Bigr)
\in M\times M^\perp.
\]

Fix $u\in M$ and $v\in M^\perp$, and define the one-user characteristic factor
\[
\phi_{b,n}(u,v)
:=\mathbb E\exp\!\left(i\big\langle u,n^{-1/2}\Pi_G X^{(b)}_{1,n}\big\rangle
+i\big\langle v,\Pi_J X^{(b)}_{1,n}\big\rangle\right).
\]
We now expand $\phi_{b,n}(u,v)$.

For the two dominant outputs, since $\Pi_J$ annihilates $M$ and
\[
\Pi_G(e_{y_{ba}}-\mu_b)=(1-p_b)g_b,
\qquad
\Pi_G(e_{y_{bb}}-\mu_b)=-p_b g_b,
\]
we get, using $W_b^{(n)}(y_{ba})=p_b+O(n^{-1})$, $W_b^{(n)}(y_{bb})=1-p_b+O(n^{-1})$, and
\[
W_b^{(n)}(y_{ba})+W_b^{(n)}(y_{bb})=1-\sum_{y\notin D_b}W_b^{(n)}(y),
\]
that
\begin{align*}
&\,W_b^{(n)}(y_{ba})
\exp\!\left(i\frac{1-p_b}{\sqrt n}\langle u,g_b\rangle\right)
+W_b^{(n)}(y_{bb})
\exp\!\left(-i\frac{p_b}{\sqrt n}\langle u,g_b\rangle\right) \\
&=1-\sum_{y\notin D_b}W_b^{(n)}(y)
  -\frac{p_b(1-p_b)}{2n}\langle u,g_b\rangle^2+o(n^{-1}) \\
&=1-\frac1n\sum_{y\notin D_b}\alpha_b(y)
  -\frac{p_b(1-p_b)}{2n}\langle u,g_b\rangle^2+o(n^{-1}).
\end{align*}
The linear term cancels because
\[
p_b(1-p_b)-(1-p_b)p_b=0,
\]
and the $O(n^{-1})$ perturbations in the dominant probabilities affect only the constant term above.

For a non-dominant output $y\notin D_b$, the event has probability
$W_b^{(n)}(y)=\alpha_b(y)n^{-1}+o(n^{-1})$, and its contribution is
\[
\exp\!\left(i\big\langle u,n^{-1/2}\Pi_G(e_y-\mu_b)\big\rangle
+i\big\langle v,j_{b,y}\big\rangle\right)
=e^{i\langle v,j_{b,y}\rangle}\bigl(1+O(n^{-1/2})\bigr).
\]
After multiplication by $W_b^{(n)}(y)=O(n^{-1})$, the $u$-dependence of these rare terms is only
$O(n^{-3/2})$, hence negligible on the logarithmic scale. Summing over the finitely many non-dominant
outputs yields
\[
\sum_{y\notin D_b}W_b^{(n)}(y)
\exp\!\left(i\big\langle u,n^{-1/2}\Pi_G(e_y-\mu_b)\big\rangle
+i\big\langle v,j_{b,y}\big\rangle\right)
=\frac1n\sum_{y\notin D_b}\alpha_b(y)e^{i\langle v,j_{b,y}\rangle}+o(n^{-1}).
\]
Combining dominant and non-dominant contributions, and using
$\sum_{y\notin D_b}\alpha_b(y)<\infty$ because $\mathcal Y$ is finite, we obtain
\[
\phi_{b,n}(u,v)
=1-\frac{p_b(1-p_b)}{2n}\langle u,g_b\rangle^2
+\frac1n\sum_{y\notin D_b}\alpha_b(y)\bigl(e^{i\langle v,j_{b,y}\rangle}-1\bigr)
+o(n^{-1}).
\]
Therefore
\[
\log\phi_{b,n}(u,v)
=-\frac{p_b(1-p_b)}{2n}\langle u,g_b\rangle^2
+\frac1n\sum_{y\notin D_b}\alpha_b(y)\bigl(e^{i\langle v,j_{b,y}\rangle}-1\bigr)
+o(n^{-1}),
\]
since $\phi_{b,n}(u,v)=1+O(n^{-1})$.

Because the two input groups are independent,
\[
\log \mathbb E_{T_{n,k_n}}\exp\bigl(i\langle u,S_n^{(G)}\rangle+i\langle v,S_n^{(J)}\rangle\bigr)
=m_{0,n}\log\phi_{0,n}(u,v)+m_{1,n}\log\phi_{1,n}(u,v),
\]
where $S_n=(S_n^{(G)},S_n^{(J)})$. Using $m_{0,n}/n\to 1-\pi$ and $m_{1,n}/n\to\pi$, we conclude that
\begin{align*}
&\log \mathbb E_{T_{n,k_n}}\exp\bigl(i\langle u,S_n^{(G)}\rangle+i\langle v,S_n^{(J)}\rangle\bigr) \\
&\to
-\frac12\Bigl((1-\pi)p_0(1-p_0)\langle u,g_0\rangle^2
+\pi p_1(1-p_1)\langle u,g_1\rangle^2\Bigr)
+\int_{M^\perp}(e^{i\langle v,z\rangle}-1)\,\nu(dz) \\
&=
-\frac12\langle u,\Sigma u\rangle
+\int_{M^\perp}(e^{i\langle v,z\rangle}-1)\,\nu(dz).
\end{align*}
This is exactly the characteristic exponent of an independent pair $(G,J)$ with
$G\sim N(0,\Sigma)$ and $J$ compound-Poisson with L\'evy measure $\nu$. By L\'evy's continuity theorem,
$P_n\Longrightarrow\mathcal L(G,J)$.

For the neighboring alternative $T_{n,k_n+1}$, write
\[
\widehat H_{n,k_n}^{\mathrm{alt}}
=\Bigl(N_{n,k_n+1}-(n-k_n-1)\mu_0-(k_n+1)\mu_1\Bigr)+(\mu_1-\mu_0).
\]
Hence
\[
S_n^{\mathrm{alt}}
=\widetilde S_n+
\Bigl(n^{-1/2}\Pi_G(\mu_1-\mu_0),\,\Pi_J(\mu_1-\mu_0)\Bigr),
\]
where $\widetilde S_n$ is the hybrid statistic built from the true group counts $n-k_n-1$ and $k_n+1$.
The same characteristic-function computation as above shows
\[
\widetilde S_n\Longrightarrow (G,J),
\]
because replacing $m_{0,n},m_{1,n}$ by $m_{0,n}-1,m_{1,n}+1$ changes the exponent only by $o(1)$. Also,
\[
n^{-1/2}\Pi_G(\mu_1-\mu_0)\to 0,
\qquad
\Pi_J(\mu_1-\mu_0)=\Delta.
\]
Therefore
\[
Q_n\Longrightarrow \mathcal L(G,J+\Delta),
\]
as claimed.
\end{proof}

\begin{remark}[L\'evy--Khintchine interpretation and the next step]
The limiting exponent obtained in the proof is
\[
-\frac12\langle u,\Sigma u\rangle
+\int_{M^\perp}(e^{i\langle v,z\rangle}-1)\,\nu(dz),
\]
namely a quadratic Gaussian part plus a finite jump measure. This is exactly the first L\'evy--Khintchine
layer for shuffle limits: the dominant $O(1)$ mass contributes a Brownian/Gaussian block, while the
$1/n$-rare deviations contribute a compound-Poisson jump block.

In the present neighboring calibration the Gaussian factor is common to both hypotheses, because replacing
one $0$-user by one $1$-user changes the dominant block only by an $O(1)$ amount, hence by $o(1)$ after
$\sqrt n$ normalization. If one simultaneously introduces an $n^{-1/2}$ local perturbation in the dominant
block, then the same computation yields a genuine hybrid shift experiment
\[
(G,J)
\quad\text{versus}\quad
(G+h,\,J+\Delta),
\]
with a nonzero Gaussian shift $h\in M$ and the compound-Poisson shift $\Delta\in M^\perp$. That is the
natural next step toward a full L\'evy--Khintchine universality theorem for shuffle privacy.
\end{remark}

\begin{remark}[Modes of convergence: TV versus weak; consequences for Le Cam distance]
\label{rem:convergence-modes}
It is essential to distinguish the convergence mode in Proposition~\ref{prop:hybrid-gaussian-compound-poisson}
from that in Theorems~\ref{thm:poisson-shift}, \ref{thm:skellam-shift}, and \ref{thm:multi-poisson-shift}.

\begin{enumerate}
\item[(i)] \textbf{Theorems~\ref{thm:poisson-shift}, \ref{thm:skellam-shift}, \ref{thm:multi-poisson-shift}}
establish convergence of $P_n \to P_\infty$ and $Q_n \to Q_\infty$ in \emph{total variation},
with explicit $O(n^{-1})$ rates. Since total variation controls Le Cam distance via Lemma~2.5,
these theorems yield genuine experiment-level convergence
$\Delta\bigl((P_n,Q_n),(P_\infty,Q_\infty)\bigr)\to 0$ with quantitative bounds.

\item[(ii)] \textbf{Proposition~\ref{prop:hybrid-gaussian-compound-poisson}} establishes only
\emph{weak convergence} $P_n\Rightarrow P_\infty$ and $Q_n\Rightarrow Q_\infty$ of the hybrid
statistic $S_n=(n^{-1/2}\Pi_G\widehat H,\,\Pi_J\widehat H)$. This is qualitatively different:
weak convergence does \emph{not} in general imply total variation convergence (the limiting laws
$P_\infty,Q_\infty$ have a Gaussian component on the continuous space $M$, whereas for each $n$
the Gaussian coordinate $S_n^{(G)}=n^{-1/2}\Pi_G\widehat H_{n,k_n}$ is supported on a countable set.
Any non-degenerate Gaussian law on $M$ is non-atomic, so if $G\sim N(0,\Sigma)$ is non-degenerate then
$\mathrm{TV}(\mathcal L(S_n^{(G)}),\mathcal L(G))=1$ for every $n$.) Accordingly,
Proposition~\ref{prop:hybrid-gaussian-compound-poisson} does \emph{not} yield a bound on
$\Delta\bigl((P_n,Q_n),(P_\infty,Q_\infty)\bigr)$ via Lemma~2.5, and should be understood as a
\emph{structural weak-limit result}: it characterizes the limiting \emph{shape} of the experiment
(Gaussian bulk plus compound-Poisson jump field) rather than establishing a Le Cam equivalence
statement.

To obtain a genuine Le Cam convergence result for the two-dominant regime one may either
(a)~project onto the jump component $\Pi_J S_n$, or (b)~show that the Gaussian factor is 
asymptotically common to both hypotheses and hence does not affect privacy curves.
Route~(a) is immediate from Theorem~\ref{thm:multi-poisson-shift}
applied to the projected experiment and is recorded formally in
Corollary~\ref{cor:projected-jump-lecam} in Section~5.3.
Route~(b) is carried out in Appendix~\ref{app:hybrid-privacy}: 
Lemma~\ref{lem:interior-conditional-smoothing} establishes an $O(n^{-1/2})$ 
conditional-smoothing bound for interior compositions $\pi\in(0,1)$, and 
Corollary~\ref{cor:full-hybrid-privacy-convergence} deduces privacy-curve convergence 
for the full hybrid experiment. The boundary compositions $\pi\in\{0,1\}$ are covered 
by the Poisson-shift results of Sections~3--4.
\end{enumerate}
\end{remark}

\begin{definition}[Limit compound-Poisson law]
\label{def:limit-compound-poisson}
Let $\{U_y\}_{y \neq y_0}$ be independent with $U_y \sim \mathrm{Poi}((1-\pi)\alpha_0(y))$, and let $\{V_y\}_{y \neq y_1}$
be independent with $V_y \sim \mathrm{Poi}(\pi\alpha_1(y))$, independent of the $U$'s. Define
the compound-Poisson random vector
\begin{equation}
H_\infty := \sum_{y \neq y_0} U_y(e_y - e_{y_0}) + \sum_{y \neq y_1} V_y(e_y - e_{y_1}) \in \mathbb{Z}^{\mathcal{Y}}. \label{eq:Hinf}
\end{equation}
\end{definition}

\subsection{Critical multivariate Poisson-shift limit}

Fix a composition sequence $k = k(n)$ with $\pi_n := k/n \to \pi \in [0,1]$. Let $N_{n,k} \in \mathbb{Z}^{\mathcal{Y}}$ denote the
released histogram under $T_{n,k}$. Define the centered histogram
\begin{equation}
H_{n,k} := N_{n,k} - (n-k) e_{y_0} - k e_{y_1} \in \mathbb{Z}^{\mathcal{Y}}, \label{eq:Hnk}
\end{equation}
where $e_y$ is the standard basis vector in $\mathbb{Z}^{\mathcal{Y}}$.

\begin{theorem}[Critical multivariate Poisson-shift limit]
\label{thm:multi-poisson-shift}
Assume $\pi_n = k/n \to \pi \in [0,1]$ and the sparse-error critical regime of Definition~\ref{def:sparse-error}. Let $P_n$ be the law of $H_{n,k}$ under $T_{n,k}$ and
let $Q_n$ be the law of $H_{n,k}$ under $T_{n,k+1}$ (centered using $k$ as in \eqref{eq:Hnk}). Let $P_\infty := \mathcal{L}(H_\infty)$ and
$Q_\infty := \mathcal{L}(H_\infty + e_{y_1} - e_{y_0})$, where $H_\infty$ is defined in \eqref{eq:Hinf}. Then
\[
P_n \to P_\infty, \qquad Q_n \to Q_\infty \quad \text{in total variation.}
\]
In particular, the neighboring shuffle experiment converges in Le Cam distance to the multivariate
Poisson-shift experiment $(P_\infty, Q_\infty)$.

Moreover, define the total ``error probabilities''
\[
p_{0,n} := \sum_{y \neq y_0} W^{(n)}_0(y) = 1 - W^{(n)}_0(y_0), \qquad p_{1,n} := \sum_{y \neq y_1} W^{(n)}_1(y) = 1 - W^{(n)}_1(y_1).
\]
Then the TV convergence admits the explicit bounds
\begin{align}
\mathrm{TV}(P_n, P_\infty) &\leq (n-k)p_{0,n}(1 - e^{-p_{0,n}}) + k p_{1,n}(1 - e^{-p_{1,n}}) \nonumber\\
&\quad + \sum_{y \neq y_0} \bigl|(n-k)W^{(n)}_0(y) - (1-\pi)\alpha_0(y)\bigr| + \sum_{y \neq y_1} \bigl|k W^{(n)}_1(y) - \pi\alpha_1(y)\bigr|, \label{eq:TVPn_multi}\\
\mathrm{TV}(Q_n, Q_\infty) &\leq (n-k-1)p_{0,n}(1 - e^{-p_{0,n}}) + (k+1)p_{1,n}(1 - e^{-p_{1,n}}) \nonumber\\
&\quad + \sum_{y \neq y_0} \bigl|(n-k-1)W^{(n)}_0(y) - (1-\pi)\alpha_0(y)\bigr| + \sum_{y \neq y_1} \bigl|(k+1)W^{(n)}_1(y) - \pi\alpha_1(y)\bigr|. \label{eq:TVQn_multi}
\end{align}
Under the mild regularity conditions $|\pi_n - \pi| = O(1/n)$ and $n W^{(n)}_b(y) = \alpha_b(y) + O(1/n)$ for all $y$,
both bounds are $O(1/n)$.
\end{theorem}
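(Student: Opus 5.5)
The plan is to reduce everything to independent per-user contributions and then use a multivariate version of the Bernoulli-to-Poisson coupling. The two tools are the tensorization/union bound (Lemma~\ref{lem:tv-tensorization}) and contraction under measurable maps (Lemma~\ref{lem:tv-contraction}).

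\textbf{Exact representation.} Under $T_{n,k}$, let $U^{(n)}_y$ count the $0$-users who emit $y\neq y_0$, and $V^{(n)}_y$ the $1$-users who emit $y\neq y_1$. Each $0$-user emitting $y$ contributes $e_y$ to $N_{n,k}$, and the centering \eqref{eq:Hnk} removes $e_{y_0}$ per $0$-user; similarly for $1$-users. This gives the exact identity
\[
H_{n,k}=\sum_{y\neq y_0}U^{(n)}_y(e_y-e_{y_0})+\sum_{y\neq y_1}V^{(n)}_y(e_y-e_{y_1}),
\]
which holds even if $y_1$ is a non-dominant output for $0$-users. This is the same linear map $\Phi$ that produces $H_\infty$ from $(U_y,V_y)$ in \eqref{eq:Hinf}. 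By Lemma~\ref{lem:tv-contraction}, it therefore suffices to bound the TV distance between the count vectors $(U^{(n)},V^{(n)})$ and the independent Poisson vectors $(U,V)$. The two groups are independent, so by Lemma~\ref{lem:tv-tensorization} we may treat the $0$-group and the $1$-group separately.

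\textbf{One user, multivariate Poisson coupling.} Write one $0$-user's non-dominant indicator vector as $Z=(\mathbf 1\{Y=y\})_{y\neq y_0}$. It equals $0$ with probability $1-p_{0,n}$ and $e_y$ with probability $W_0^{(n)}(y)$. I compare $Z$ with $\tilde Z$, a vector of independent $\mathrm{Poi}(W^{(n)}_0(y))$ coordinates, via a two-stage construction:
\begin{itemize}
\item The total of $Z$ is $\mathrm{Bern}(p_{0,n})$. By Poisson splitting, $\tilde Z$ is a $\mathrm{Poi}(p_{0,n})$ total whose points receive i.i.d.\ labels with law $W_0^{(n)}(y)/p_{0,n}$.
\item Couple the two totals maximally, with mismatch probability $\mathrm{TV}(\mathrm{Bern}(p),\mathrm{Poi}(p))=p(1-e^{-p})$, the same quantity as in Lemma~A.1.
\item Use the same label sequence for both vectors.
\end{itemize}
Then $Z=\tilde Z$ whenever the totals agree. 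Lemma~2.2 gives $\mathrm{TV}(\mathcal L(Z),\mathcal L(\tilde Z))\le p_{0,n}(1-e^{-p_{0,n}})$.

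\textbf{Summing over users.} Apply the coupling independently to each of the $n-k$ users and use the union bound of Lemma~\ref{lem:tv-tensorization}, contracted through the summation map. This gives TV at most $(n-k)p_{0,n}(1-e^{-p_{0,n}})$ to a vector of independent $\mathrm{Poi}((n-k)W^{(n)}_0(y))$ coordinates, by Poisson additivity. The same argument for the $1$-group gives $kp_{1,n}(1-e^{-p_{1,n}})$.

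\textbf{Parameter perturbation.} Move each Poisson coordinate from $(n-k)W_0^{(n)}(y)$ to $(1-\pi)\alpha_0(y)$, and from $kW_1^{(n)}(y)$ to $\pi\alpha_1(y)$. Each move costs at most the parameter difference by Lemma~A.2, and tensorization over the finitely many coordinates sums these costs. The triangle inequality then yields \eqref{eq:TVPn_multi}.

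\textbf{The alternative.} Under $T_{n,k+1}$ the group sizes are $n-k-1$ and $k+1$, but the centering still uses $k$. Hence $H_{n,k}$ equals the same functional $\Phi$ of the new counts, plus the deterministic shift $e_{y_1}-e_{y_0}$. Shifts preserve TV, so repeating the argument with the new group sizes gives \eqref{eq:TVQn_multi} relative to $Q_\infty$.

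\textbf{Convergence and rates.} Both experiments live on the countable space $\mathbb Z^{\mathcal Y}$, so Lemma~\ref{lem:lecam-tv-bound} gives Le Cam convergence once the bounds vanish. They do vanish:
\begin{itemize}
\item Since $np_{b,n}\to\sum_y\alpha_b(y)<\infty$, the first-order terms are $O(np_{b,n}^2)=O(1/n)$.
\item For the parameter terms, $|(n-k)W_0^{(n)}(y)-(1-\pi)\alpha_0(y)|\le (1-\pi_n)|nW_0^{(n)}(y)-\alpha_0(y)|+\alpha_0(y)|\pi_n-\pi|$, which tends to $0$, and is $O(1/n)$ under the stated regularity conditions. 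The $\pm1$ changes in group size under the alternative add only $W^{(n)}_b(y)=O(1/n)$.
\end{itemize}

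The main obstacle is the one-user multivariate coupling. The scalar Lemma~A.1 does not directly cover a categorical vector, so the Poisson-splitting representation is needed to reduce it to the scalar Bernoulli-versus-Poisson comparison. The remaining steps are bookkeeping.
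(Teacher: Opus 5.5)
Your proposal is correct and follows the paper's proof in all essentials. The steps match: the exact representation $H_{n,k}=\Phi(D^{(0)}_{n,k},D^{(1)}_{n,k})$ followed by contraction, Poisson approximation done group by group, coordinatewise parameter perturbation via Lemma~A.2 with tensorization, and the deterministic shift $e_{y_1}-e_{y_0}$ for the alternative.

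The only variation is in the multinomial-to-independent-Poisson step. You couple each user separately, maximally coupling the $\mathrm{Bern}(p_{0,n})$ and $\mathrm{Poi}(p_{0,n})$ totals with a shared label sequence, and then sum over users. The paper instead aggregates first and invokes Lemma~A.3, which applies Lemma~A.1 to the binomial total and uses Poisson splitting for the allocation. Both routes give exactly the bound $(n-k)p_{0,n}(1-e^{-p_{0,n}})$.
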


\begin{proof}
The argument makes precise the heuristic ``each user makes an error with probability $O(1/n)$,
so the multiset of errors is approximately an independent rare-category Poisson vector.''

\textbf{Step 1: decompose the histogram into two independent multinomials.} Under $T_{n,k}$
there are $n-k$ users with input 0 and $k$ users with input 1. Let
\[
N^{(0)}_{n,k}(y) := \sum_{i:\, x_i=0} \mathbf{1}\{Y_i = y\}, \qquad N^{(1)}_{n,k}(y) := \sum_{i:\, x_i=1} \mathbf{1}\{Y_i = y\}.
\]
By independence across users,
\[
N^{(0)}_{n,k} \sim \mathrm{Mult}(n-k, W^{(n)}_0), \qquad N^{(1)}_{n,k} \sim \mathrm{Mult}(k, W^{(n)}_1),
\]
and the two multinomials are independent. Moreover $N_{n,k} = N^{(0)}_{n,k} + N^{(1)}_{n,k}$ and
\begin{equation}
H_{n,k} = \bigl(N^{(0)}_{n,k} - (n-k)e_{y_0}\bigr) + \bigl(N^{(1)}_{n,k} - k e_{y_1}\bigr). \label{eq:Hnk_decomp}
\end{equation}

\textbf{Step 2: isolate deviation counts from the dominant outputs.} Define the deviation vectors (dropping the dominant coordinates)
\[
D^{(0)}_{n,k} := \bigl(N^{(0)}_{n,k}(y)\bigr)_{y \neq y_0} \in \mathbb{N}^{\mathcal{Y}\setminus\{y_0\}}, \qquad D^{(1)}_{n,k} := \bigl(N^{(1)}_{n,k}(y)\bigr)_{y \neq y_1} \in \mathbb{N}^{\mathcal{Y}\setminus\{y_1\}}.
\]
Define the deterministic linear map
\begin{equation}
\Phi(u, v) := \sum_{y \neq y_0} u(y)(e_y - e_{y_0}) + \sum_{y \neq y_1} v(y)(e_y - e_{y_1}). \label{eq:Phi}
\end{equation}
Then \eqref{eq:Hnk_decomp} implies $H_{n,k} = \Phi(D^{(0)}_{n,k}, D^{(1)}_{n,k})$.

\textbf{Step 3: Poisson approximation for multinomial deviation vectors.} Let $\tilde{U}^{(n)} = (\tilde{U}^{(n)}_y)_{y \neq y_0}$
have independent coordinates $\tilde{U}^{(n)}_y \sim \mathrm{Poi}((n-k)W^{(n)}_0(y))$. A standard explicit Poisson approximation for multinomials on rare categories (Lemma~A.3) yields
\begin{equation}
\mathrm{TV}\bigl(\mathcal{L}(D^{(0)}_{n,k}), \mathcal{L}(\tilde{U}^{(n)})\bigr) \leq (n-k)p_{0,n}(1 - e^{-p_{0,n}}). \label{eq:TV_D0}
\end{equation}
Similarly, let $\tilde{V}^{(n)} = (\tilde{V}^{(n)}_y)_{y \neq y_1}$ have independent coordinates $\tilde{V}^{(n)}_y \sim \mathrm{Poi}(k W^{(n)}_1(y))$. Then
\begin{equation}
\mathrm{TV}\bigl(\mathcal{L}(D^{(1)}_{n,k}), \mathcal{L}(\tilde{V}^{(n)})\bigr) \leq k p_{1,n}(1 - e^{-p_{1,n}}). \label{eq:TV_D1}
\end{equation}
Since the two groups are independent, by Lemma~2.4,
\[
\mathrm{TV}\bigl(\mathcal{L}(D^{(0)}_{n,k}, D^{(1)}_{n,k}), \mathcal{L}(\tilde{U}^{(n)}, \tilde{V}^{(n)})\bigr) \leq (n-k)p_{0,n}(1-e^{-p_{0,n}}) + k p_{1,n}(1-e^{-p_{1,n}}).
\]

\textbf{Step 4: replace finite-$n$ Poisson means by limiting means.} Let $U = (U_y)_{y \neq y_0}$ and
$V = (V_y)_{y \neq y_1}$ be as in Definition~\ref{def:limit-compound-poisson}, i.e.\ independent Poisson coordinates with means $(1-\pi)\alpha_0(y)$
and $\pi\alpha_1(y)$. Using the one-dimensional Poisson perturbation bound Lemma~A.2 coordinatewise
and Lemma~2.4 (iterated over all coordinates), we obtain
\begin{equation}
\mathrm{TV}\bigl(\mathcal{L}(\tilde{U}^{(n)}), \mathcal{L}(U)\bigr) \leq \sum_{y \neq y_0} \bigl|(n-k)W^{(n)}_0(y) - (1-\pi)\alpha_0(y)\bigr|, \label{eq:perturb_U}
\end{equation}
and similarly
\begin{equation}
\mathrm{TV}\bigl(\mathcal{L}(\tilde{V}^{(n)}), \mathcal{L}(V)\bigr) \leq \sum_{y \neq y_1} \bigl|k W^{(n)}_1(y) - \pi\alpha_1(y)\bigr|. \label{eq:perturb_V}
\end{equation}
Since $U$ and $V$ are independent, the joint TV distance between $(\tilde{U}^{(n)}, \tilde{V}^{(n)})$ and $(U, V)$ is bounded
by the sum of \eqref{eq:perturb_U} and \eqref{eq:perturb_V} by Lemma~2.4.

\textbf{Step 5: push through the deterministic map $\Phi$.} By construction, $H_{n,k} = \Phi(D^{(0)}_{n,k}, D^{(1)}_{n,k})$
and $H_\infty = \Phi(U, V)$ (compare \eqref{eq:Phi} and \eqref{eq:Hinf}). Total variation cannot increase under the measurable
map $\Phi$ (Lemma~2.3), hence
\[
\mathrm{TV}(P_n, P_\infty) \leq \mathrm{TV}\bigl(\mathcal{L}(D^{(0)}_{n,k}, D^{(1)}_{n,k}), \mathcal{L}(U, V)\bigr).
\]
Combining Steps 3--4 by the triangle inequality yields \eqref{eq:TVPn_multi}.

\textbf{Step 6: the alternative $T_{n,k+1}$.} Under $T_{n,k+1}$ there are $n-k-1$ zeros and $k+1$ ones, so
the same argument as Steps 1--5 applies with $(n-k, k)$ replaced by $(n-k-1, k+1)$, yielding
the bound \eqref{eq:TVQn_multi} for the centered statistic with $(k+1)$-centering. The statistic $H_{n,k}$ in \eqref{eq:Hnk} centers
using $k$, hence differs by a deterministic shift: if
\[
\tilde{H}_n := N_{n,k+1} - (n-k-1)e_{y_0} - (k+1)e_{y_1},
\]
then $H_{n,k} = \tilde{H}_n + e_{y_1} - e_{y_0}$. Since shifts are measurable maps and preserve TV, the convergence of
$\tilde{H}_n$ to $H_\infty$ implies $H_{n,k}$ converges to $H_\infty + e_{y_1} - e_{y_0}$, and \eqref{eq:TVQn_multi} follows.

\textbf{Step 7: Le Cam distance.} Both experiments live on the countable space $\mathbb{Z}^{\mathcal{Y}}$, so Lemma~2.5
yields experiment-level convergence.
\end{proof}

\begin{corollary}[Le Cam convergence for the projected jump experiment]
\label{cor:projected-jump-lecam}
In the setting of Proposition~\ref{prop:hybrid-gaussian-compound-poisson}, define the projected
statistic
\[
S_n^J:=\Pi_J\widehat H_{n,k_n}\in M^\perp,
\]
and let
\[
P_n^J:=\mathcal L(S_n^J) \quad \text{under } T_{n,k_n},
\qquad
Q_n^J:=\mathcal L(S_n^J) \quad \text{under } T_{n,k_n+1},
\]
where under $T_{n,k_n+1}$ the same centering by $(n-k_n)\mu_0+k_n\mu_1$ is used, exactly as in
Proposition~\ref{prop:hybrid-gaussian-compound-poisson}. Equivalently, $(P_n^J,Q_n^J)$ is the
pushforward of $(P_n,Q_n)$ under the second-coordinate projection $(u,v)\mapsto v$ from
$M\times M^\perp$ onto $M^\perp$. Let
\[
P_\infty^J:=\mathcal L(J),
\qquad
Q_\infty^J:=\mathcal L(J+\Delta),
\]
with $J$ and $\Delta$ as in Proposition~\ref{prop:hybrid-gaussian-compound-poisson}. Then
\[
P_n^J \to P_\infty^J,
\qquad
Q_n^J \to Q_\infty^J
\quad \text{in total variation.}
\]
Consequently,
\[
\Delta\bigl((P_n^J,Q_n^J),(P_\infty^J,Q_\infty^J)\bigr)
\leq
\max\bigl\{\mathrm{TV}(P_n^J,P_\infty^J),\mathrm{TV}(Q_n^J,Q_\infty^J)\bigr\}
\to 0.
\]
Thus route~\textup{(a)} in Remark~\ref{rem:convergence-modes} yields a genuine Le Cam convergence
statement after projection onto $M^\perp$. The full hybrid experiment $(P_n,Q_n)$ of Proposition~\ref{prop:hybrid-gaussian-compound-poisson}, which retains the Gaussian factor on $M$, is not resolved by this projection argument; however, Appendix~\ref{app:hybrid-privacy} establishes privacy-curve convergence for the full experiment at interior compositions $\pi\in(0,1)$ via a conditional-smoothing route.
\end{corollary}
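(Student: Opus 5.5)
The plan is to show that the projected statistic $S_n^J$ depends on the transcript only through the counts of \emph{non-dominant} outputs in each input group. Once that is in place, the multinomial-to-Poisson argument from the proof of Theorem~\ref{thm:multi-poisson-shift} (Steps 3--5) carries over essentially verbatim. The Le Cam bound then follows from Lemma~\ref{lem:lecam-tv-bound}, since all laws involved live on the countable set of integer combinations of the finitely many vectors $j_{b,y}$ and $\Delta$.

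\textbf{Step 1 (exact reduction).} As in the proof of Proposition~\ref{prop:hybrid-gaussian-compound-poisson}, write
\[
\widehat H_{n,k_n}=\sum_{i\le n-k_n}\bigl(e_{Y^{(0)}_i}-\mu_0\bigr)+\sum_{j\le k_n}\bigl(e_{Y^{(1)}_j}-\mu_1\bigr).
\]
For $y\in D_b$ we have $e_y-\mu_b\in\{(1-p_b)g_b,\,-p_bg_b\}\subseteq M$, so $\Pi_J(e_y-\mu_b)=0$. For $y\notin D_b$ we have $\Pi_J(e_y-\mu_b)=j_{b,y}$ by definition. Let $D^{(b)}_n:=(N^{(b)}(y))_{y\notin D_b}$ be the rare-output count vectors. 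Then
\[
S_n^J=\Psi\bigl(D^{(0)}_n,D^{(1)}_n\bigr),\qquad \Psi(u,v):=\sum_{y\notin D_0}u(y)\,j_{0,y}+\sum_{y\notin D_1}v(y)\,j_{1,y},
\]
and likewise $J=\Psi(U,V)$ with $U,V$ as in Proposition~\ref{prop:hybrid-gaussian-compound-poisson}. By Lemma~\ref{lem:tv-contraction}, it therefore suffices to prove TV convergence of $(D^{(0)}_n,D^{(1)}_n)$ to $(U,V)$.

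\textbf{Step 2 (Poisson approximation).} The vector $D^{(b)}_n$ is the rare-category sub-vector of $\mathrm{Mult}(m_{b,n},W_b^{(n)})$. The two dominant outputs can be merged into a single ``bulk'' category, since this does not change $D^{(b)}_n$. The total rare mass is $p'_{b,n}:=\sum_{y\notin D_b}W^{(n)}_b(y)=O(n^{-1})$, and Lemma~A.3 gives
\[
\mathrm{TV}\bigl(\mathcal L(D^{(b)}_n),\ \textstyle\bigotimes_{y\notin D_b}\mathrm{Poi}(m_{b,n}W_b^{(n)}(y))\bigr)\le m_{b,n}\,(p'_{b,n})^2=O(n^{-1}).
\]
Next, Lemma~A.2 applied coordinatewise, combined with Lemma~\ref{lem:tv-tensorization}, replaces the means $m_{b,n}W^{(n)}_b(y)$ by $(1-\pi)\alpha_0(y)$ and $\pi\alpha_1(y)$. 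The resulting error is $\sum_y|m_{b,n}W_b^{(n)}(y)-\text{limit}|\to0$, because $m_{0,n}/n\to1-\pi$, $m_{1,n}/n\to\pi$, and $nW_b^{(n)}(y)\to\alpha_b(y)$. Independence of the two groups, together with Lemma~\ref{lem:tv-tensorization}, combines the two bounds, and the triangle inequality then gives $P_n^J\to P_\infty^J$ in TV.

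\textbf{Step 3 (alternative).} Under $T_{n,k_n+1}$, decompose exactly as in the proof of Proposition~\ref{prop:hybrid-gaussian-compound-poisson}:
\[
\Pi_J\widehat H^{\mathrm{alt}}=\Pi_J\widetilde H_n+\Pi_J(\mu_1-\mu_0)=\Psi\bigl(\widetilde D^{(0)}_n,\widetilde D^{(1)}_n\bigr)+\Delta,
\]
where the $\widetilde D$ are the rare counts for group sizes $n-k_n-1$ and $k_n+1$. Steps 1--2 apply unchanged to these group sizes, because the means shift only by $O(n^{-1})$. Since translation by $\Delta$ preserves TV, it follows that $Q_n^J\to Q_\infty^J$.

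The only point requiring care is Step 1: the exact annihilation of the dominant contributions. This uses that both dominant increments $e_y-\mu_b$ lie in $M$, which holds by the choice $M=\mathrm{span}\{g_0,g_1\}$ and the definition of $\mu_b$. It is exactly this annihilation that removes the $\sqrt n$-scale Gaussian fluctuation, so that the weak-convergence obstruction of Remark~\ref{rem:convergence-modes} does not arise. The disjointness assumption $D_0\cap D_1=\varnothing$ is used here so that the rare sets and jump vectors $j_{b,y}$ are unambiguously defined per group. The other point needing a check is that Lemma~A.3 is stated for a single dominant category; the merging of the two dominant outputs in Step 2 handles this.
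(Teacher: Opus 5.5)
Your proof is correct, but it is organized differently from the paper's.

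\textbf{The paper's route.} The paper does not re-run the Poisson approximation. It pushes each channel $W_b^{(n)}$ forward under $y\mapsto\Pi_J e_y$ to get a new finite alphabet $\mathcal Y_J\subset M^\perp$. It then checks that the projected array lies in the sparse-error critical regime of Definition~\ref{def:sparse-error}, with dominant outputs $m_b=\Pi_J\mu_b$. This is where $D_0\cap D_1=\varnothing$ enters: it guarantees $m_0\neq m_1$. Theorem~\ref{thm:multi-poisson-shift} is applied as a black box. The result is mapped back by the linear map $L(h)=\sum_w h(w)\,w$, using Poisson superposition to merge source symbols with the same projected jump.

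\textbf{Your route.} You observe directly that $\Pi_J$ annihilates the dominant increments $(1-p_b)g_b$ and $-p_bg_b$. Hence $S_n^J=\Psi(D_n^{(0)},D_n^{(1)})$ is a linear image of the rare-output count vectors. You then repeat Steps~3--5 of that theorem's proof on the original alphabet.

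\textbf{Comparison.} Your route is more self-contained and shows plainly why the $\sqrt n$ Gaussian block disappears. It needs neither the projected-alphabet construction nor the check $m_0\neq m_1$. In fact it never uses disjointness, so your closing remark giving $D_0\cap D_1=\varnothing$ a role is unnecessary. The paper's route gains reuse: the explicit bounds \eqref{eq:TVPn_multi}--\eqref{eq:TVQn_multi} transfer immediately to the projected array, with perturbation terms grouped over symbols sharing a projection.

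\textbf{A small correction.} Lemma~A.3 is already stated for an arbitrary subset $B\subseteq A$, not for a single dominant category. Merging the two dominant outputs into one bulk category is therefore harmless but not needed.
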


\begin{proof}
Let
\[
\mathcal Y_J:=\Pi_J\bigl(\{e_y:y\in\mathcal Y\}\bigr)\subset M^\perp,
\]
and for $b\in\{0,1\}$ let $\widetilde W_b^{(n)}$ be the pushforward of $W_b^{(n)}$ under the map
$y\mapsto \Pi_J e_y$. Write
\[
m_b:=\Pi_J\mu_b=\Pi_J e_{y_{ba}}=\Pi_J e_{y_{bb}}\in M^\perp.
\]
We claim that $m_0\neq m_1$. Indeed, if $m_0=m_1$, then
\[
e_{y_{0a}}-e_{y_{1a}}\in M=\mathrm{span}\{g_0,g_1\}
=\mathrm{span}\{e_{y_{0a}}-e_{y_{0b}},\,e_{y_{1a}}-e_{y_{1b}}\}.
\]
So there exist $c_0,c_1\in\mathbb R$ such that
\[
e_{y_{0a}}-e_{y_{1a}}
=c_0(e_{y_{0a}}-e_{y_{0b}})+c_1(e_{y_{1a}}-e_{y_{1b}}).
\]
Since $D_0\cap D_1=\varnothing$, comparison of the coordinates at $y_{0b}$ and $y_{1b}$ yields
$c_0=c_1=0$, which is impossible because $e_{y_{0a}}-e_{y_{1a}}\neq 0$. Hence $m_0\neq m_1$.
Moreover, for every $w\in\mathcal Y_J\setminus\{m_b\}$,
\[
n\widetilde W_b^{(n)}(w)\to \widetilde\alpha_b(w):=
\sum_{\substack{y\notin D_b\\ \Pi_J e_y=w}} \alpha_b(y),
\qquad
\widetilde W_b^{(n)}(m_b)\to 1.
\]
Thus $\widetilde W^{(n)}$ is in the sparse-error critical regime of Definition~\ref{def:sparse-error}
on the finite alphabet $\mathcal Y_J$ with dominant outputs $(m_0,m_1)$. Let $\widetilde H_{n,k_n}$
be the associated centered histogram as in \eqref{eq:Hnk}, and define the linear map
\[
L:\mathbb Z^{\mathcal Y_J}\to M^\perp,
\qquad
L(h):=\sum_{w\in\mathcal Y_J} h(w)\,w.
\]
By construction,
\[
L(\widetilde H_{n,k_n})=\Pi_J\widehat H_{n,k_n}=S_n^J.
\]
If $\widetilde H_\infty$ denotes the limit random vector from
Theorem~\ref{thm:multi-poisson-shift} applied to $\widetilde W^{(n)}$, then
\[
L(\widetilde H_\infty)\overset{d}=J,
\qquad
L(\widetilde H_\infty+e_{m_1}-e_{m_0})\overset{d}=J+(m_1-m_0)=J+\Delta,
\]
where the first identity is exactly the grouping of equal projected jump vectors. Therefore
Theorem~\ref{thm:multi-poisson-shift}, followed by contraction of total variation under the
measurable map $L$ (Lemma~2.3), gives
\[
\mathrm{TV}(P_n^J,P_\infty^J)\to 0,
\qquad
\mathrm{TV}(Q_n^J,Q_\infty^J)\to 0.
\]
More explicitly, these two total-variation terms are bounded by the right-hand sides of
\eqref{eq:TVPn_multi}--\eqref{eq:TVQn_multi} applied to the projected array $\widetilde W^{(n)}$.
The displayed Le Cam bound is then exactly Lemma~2.5.
\end{proof}

\begin{corollary}[Explicit $O(n^{-1})$ rate and privacy curve convergence for the multivariate limit]
\label{cor:multi-explicit-rate}
Assume the sparse-error critical regime of Definition~\ref{def:sparse-error} with $n W^{(n)}_b(y) = \alpha_b(y) + O(1/n)$
uniformly in $y$ and $b$, and $|\pi_n - \pi| = O(1/n)$. Then there exists a constant $C > 0$ depending
only on the $\alpha_b(y)$ and $\pi$ such that
\[
\Delta\bigl((P_n,Q_n),(P_\infty,Q_\infty)\bigr) \leq \frac{C}{n}.
\]
Moreover, for every fixed $\varepsilon \geq 0$,
\[
\bigl|\delta_{Q_n\|P_n}(\varepsilon) - \delta_{Q_\infty\|P_\infty}(\varepsilon)\bigr| \leq \frac{C(1+e^\varepsilon)}{n}.
\]
\end{corollary}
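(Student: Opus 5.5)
The plan is to read everything off the explicit bounds \eqref{eq:TVPn_multi}--\eqref{eq:TVQn_multi} of Theorem~\ref{thm:multi-poisson-shift}, then apply Lemma~\ref{lem:lecam-tv-bound} for the Le Cam distance and Lemma~\ref{lem:privacy-curve-stability} for the privacy curve. Concretely, we take $C_\alpha$ with $|nW^{(n)}_b(y)-\alpha_b(y)|\le C_\alpha/n$ for all $y,b$, and $C_\pi$ with $|k-\pi n|\le C_\pi$ (this follows from $|\pi_n-\pi|=O(1/n)$). We also set $A:=\max_b\sum_{y}\alpha_b(y)<\infty$, which is finite because $\mathcal Y$ is finite.

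\textbf{Step 1 (Poisson-approximation terms).} The error probabilities satisfy $p_{b,n}=\sum_{y\neq y_b}W^{(n)}_b(y)\le (A+|\mathcal Y|C_\alpha/n)/n$. Using $1-e^{-p}\le p$, the first line of each bound is at most $n\max_b p_{b,n}^2$. This is at most $(A+|\mathcal Y|C_\alpha)^2/n$ for $n\ge1$, and the same estimate holds with $(n-k-1,k+1)$ in place of $(n-k,k)$.

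\textbf{Step 2 (parameter-perturbation terms).} For the $P_n$-bound, write
\[
(n-k)W^{(n)}_0(y)-(1-\pi)\alpha_0(y)=\tfrac{n-k}{n}\bigl(nW^{(n)}_0(y)-\alpha_0(y)\bigr)+\bigl(\tfrac{n-k}{n}-(1-\pi)\bigr)\alpha_0(y).
\]
The first piece is at most $C_\alpha/n$ and the second at most $C_\pi\alpha_0(y)/n$. The $k W^{(n)}_1(y)$ terms are handled the same way. For the $Q_n$-bound the shift $k\mapsto k+1$ adds at most $W^{(n)}_b(y)\le(\alpha_b(y)+C_\alpha)/n$ per coordinate. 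Summing over the finitely many $y$ gives a bound of the form $|\mathcal Y|(2C_\alpha)+(C_\pi+1)\cdot 2A+\dots$, all divided by $n$.

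\textbf{Step 3 (conclusion).} Combining Steps 1--2 yields $\max\{\mathrm{TV}(P_n,P_\infty),\mathrm{TV}(Q_n,Q_\infty)\}\le C/n$. Here $C$ depends only on $\alpha_b(\cdot)$, $\pi$ and the implied $O(\cdot)$ constants, which we absorb into the hypothesis. Lemma~\ref{lem:lecam-tv-bound} then gives the Le Cam bound. Lemma~\ref{lem:privacy-curve-stability} gives $|\delta_{Q_n\|P_n}(\varepsilon)-\delta_{Q_\infty\|P_\infty}(\varepsilon)|\le \mathrm{TV}(Q_n,Q_\infty)+e^\varepsilon\mathrm{TV}(P_n,P_\infty)\le C(1+e^\varepsilon)/n$.

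The only mildly delicate point is bookkeeping rather than analysis. First, the constant must be uniform in $n$; small $n$ are trivial because TV is at most $1$, so we enlarge $C$ to at least $n_0$. Second, we need the fact that $k$ may be taken as the integer $k_n$ with $|k_n-\pi n|=O(1)$. There is no genuine obstacle beyond this.
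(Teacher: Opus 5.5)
Your proposal is correct and follows the paper's proof. The paper bounds each term of \eqref{eq:TVPn_multi}--\eqref{eq:TVQn_multi} by $O(1/n)$ using $1-e^{-p}\le p$ and $p_{b,n}=O(1/n)$, then invokes Lemma~\ref{lem:lecam-tv-bound} and Lemma~\ref{lem:privacy-curve-stability}; your explicit splitting of the parameter-perturbation terms just spells out its ``similarly'' step. Your observation that $C$ also depends on the implied $O(\cdot)$ constants and on $|\mathcal Y|$ is accurate, and it applies equally to the paper's ``sum of all these $O(1/n)$ coefficients.''
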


\begin{proof}
Under the stated regularity conditions, each term in \eqref{eq:TVPn_multi}--\eqref{eq:TVQn_multi} is $O(1/n)$:
the binomial-error terms satisfy $(n-k)p_{0,n}(1-e^{-p_{0,n}}) \leq (n-k)p_{0,n}^2 = O(1/n)$ since
$p_{0,n} = \sum_{y \neq y_0} W^{(n)}_0(y) = O(1/n)$, and similarly for the parameter-perturbation terms.
The explicit constant $C$ is the sum of all these $O(1/n)$ coefficients evaluated from \eqref{eq:TVPn_multi}--\eqref{eq:TVQn_multi}.
The privacy curve bound then follows from Lemma~\ref{lem:privacy-curve-stability}.
\end{proof}

\subsection{Limiting privacy curve as an explicit series}

\begin{corollary}[Limiting privacy curve as a series]
\label{cor:multi-curve-series}
Let $p_\infty(h) := P(H_\infty = h)$ and note that $Q_\infty(h) = p_\infty(h - e)$ with shift $e := e_{y_1} - e_{y_0}$. Then for every $\varepsilon \geq 0$,
\begin{equation}
\delta_{Q_\infty\|P_\infty}(\varepsilon) = \sum_{h \in \mathbb{Z}^{\mathcal{Y}}} \bigl(p_\infty(h - e) - e^\varepsilon p_\infty(h)\bigr)_+. \label{eq:multi_curve}
\end{equation}
Moreover:
\begin{enumerate}
\item[(a)] If $|\mathcal{Y}| = 2$, the series \eqref{eq:multi_curve} reduces to the Skellam series \eqref{eq:Sk_curve}.
\item[(b)] If $\pi = 0$ (respectively $\pi = 1$), then the limit histogram law is generated only by the 0-group (respectively
only by the 1-group), but the boundary experiment factors into a scalar Poisson-shift on the switched
coordinate and common independent noise on the remaining coordinates. Under the coordinate map $h \mapsto (h_y)_{y \neq y_0}$,
\[
P_\infty \simeq \mathrm{Poi}(\alpha_0(y_1)) \otimes \bigotimes_{y \in \mathcal{Y}\setminus\{y_0,y_1\}} \mathrm{Poi}(\alpha_0(y)),
\]
whereas
\[
Q_\infty \simeq \bigl(1 + \mathrm{Poi}(\alpha_0(y_1))\bigr) \otimes \bigotimes_{y \in \mathcal{Y}\setminus\{y_0,y_1\}} \mathrm{Poi}(\alpha_0(y)).
\]
Hence the boundary privacy curve is exactly the scalar Poisson-shift series \eqref{eq:Poi_curve} with parameter $\lambda = \alpha_0(y_1)$.
In particular, extra active coordinates contribute only common independent noise and do not affect the
privacy profile. The case $\pi = 1$ is symmetric with $y_0$ replacing $y_1$.
\end{enumerate}
\end{corollary}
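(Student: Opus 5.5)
The series identity \eqref{eq:multi_curve} comes first. I would apply the countable-space Neyman--Pearson identity \eqref{eq:NP2} of Lemma~2.1 on the countable space $\mathbb{Z}^{\mathcal{Y}}$. The only thing to check is that $Q_\infty(h)=p_\infty(h-e)$. This is immediate, because $Q_\infty$ is the law of $H_\infty+e$ by Theorem~\ref{thm:multi-poisson-shift}. No absolute continuity is needed, since \eqref{eq:NP2} does not require it.

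For part (a), take $\mathcal{Y}=\{y_0,y_1\}$ with $y_0\neq y_1$. Then $H_\infty=U_{y_1}(e_{y_1}-e_{y_0})+V_{y_0}(e_{y_0}-e_{y_1})=(U_{y_1}-V_{y_0})(e_{y_1}-e_{y_0})$. So $H_\infty$ is supported on the line $\mathbb{Z}e$, and its coefficient $D:=U_{y_1}-V_{y_0}$ has law $\mathrm{Skellam}((1-\pi)\alpha_0(y_1),\pi\alpha_1(y_0))$. The map $d\mapsto de$ is injective, so I would reindex the sum by $h=de$. Terms with $h\notin\mathbb{Z}e$ vanish, because both $p_\infty(h)$ and $p_\infty(h-e)$ are zero there. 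What remains is $\sum_d(p_D(d-1)-e^\varepsilon p_D(d))_+$, which is \eqref{eq:Sk_curve}. In binary RR, $\alpha_0(1)=\alpha_1(0)=c^{-2}$, which recovers $\lambda_0,\lambda_1$ of Theorem~\ref{thm:skellam-shift}.

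For part (b), take $\pi=0$. Every $V_y$ then has mean $0$, so $V\equiv 0$ and $H_\infty=\sum_{y\ne y_0}U_y(e_y-e_{y_0})$. The coordinates $h_y$ for $y\neq y_0$ equal $U_y$. The $y_0$-coordinate is $-\sum_{y\ne y_0} h_y$ under both $P_\infty$ and $Q_\infty$, because $e$ has coordinate sum zero. Hence the coordinate map $h\mapsto(h_y)_{y\ne y_0}$ is injective on the common support hyperplane $\{\sum_y h_y=0\}$. I would record this injectivity explicitly, since it is what makes the map a bijection and not merely a contraction; TV and privacy curves are therefore preserved exactly.

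Under this map, $P_\infty$ becomes $\bigotimes_{y\ne y_0}\mathrm{Poi}(\alpha_0(y))$. Adding $e$ shifts only the $y_1$ coordinate by $+1$, so $Q_\infty$ becomes $(1+\mathrm{Poi}(\alpha_0(y_1)))\otimes$ the same remaining factors.

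For the privacy curve, write $p=p_1\otimes r$ and $q=q_1\otimes r$. Then $(q_1(a)r(b)-e^\varepsilon p_1(a)r(b))_+=r(b)(q_1(a)-e^\varepsilon p_1(a))_+$. Summing over $b$, using $\sum_b r(b)=1$, gives exactly \eqref{eq:Poi_curve} with $\lambda=\alpha_0(y_1)$, and the reverse curve follows the same way. The case $\pi=1$ is symmetric: $U\equiv0$, and I would use the coordinates $y\neq y_1$. The switched coordinate is now $y_0$, shifted by $-1$. A reflection $d\mapsto -d$ on that coordinate turns it into the scalar Poisson-shift with parameter $\alpha_1(y_0)$.

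The main obstacle is bookkeeping rather than analysis. One must verify that the coordinate projection is injective on the supports, so that the identification is exact and not only an inequality. For (a), one must also handle the possibly degenerate intensities cleanly, for instance when $\alpha_0(y_1)=0$.
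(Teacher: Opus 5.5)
Your route is the paper's own. You apply \eqref{eq:NP2} on $\mathbb{Z}^{\mathcal Y}$ for \eqref{eq:multi_curve}, collapse onto the line $\mathbb{Z}e$ for part (a), and use the coordinate identification with a product factorization for $\pi=0$. Those three pieces are correct. Your injectivity check on $\{\sum_y h_y=0\}$ and the termwise factorization (a countable-space version of Lemma~\ref{lem:common-independent-factor}) make explicit what the paper leaves implicit.

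The step that fails is your final claim about $\pi=1$.
\begin{itemize}
\item After the coordinate map, the switched coordinate is the pair $(V,V-1)$ with $V\sim\mathrm{Poi}(\lambda)$ and $\lambda=\alpha_1(y_0)$.
\item The reflection $d\mapsto -d$ gives $(-V,1-V)$. This is a $+1$ shift of a \emph{reflected} Poisson, not $(\mathrm{Poi}(\lambda),1+\mathrm{Poi}(\lambda))$.
\item The affine bijection $d\mapsto 1-d$ sends it to $(1+\mathrm{Poi}(\lambda),\mathrm{Poi}(\lambda))$. This is the scalar Poisson-shift with the two hypotheses interchanged.
\end{itemize}

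Orientation matters for a one-sided curve. Working directly in $\mathbb{Z}^{\mathcal Y}$, we have $Q_\infty\{h_{y_0}=-1\}=P(V_{y_0}=0)=e^{-\lambda}$, while $P_\infty\{h_{y_0}=-1\}=0$. Hence $\delta_{Q_\infty\|P_\infty}(\varepsilon)\ge e^{-\lambda}$ for every $\varepsilon$. By contrast, for $\lambda>0$ the series \eqref{eq:Poi_curve} tends to $0$ as $\varepsilon\to\infty$, because its likelihood ratio $J/\lambda$ is integrable and dominated convergence applies.

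The correct conclusion at $\pi=1$ therefore has two parts, in the notation of Proposition~\ref{prop:poisson-monotonicity} and after removing the common noise factor:
\begin{itemize}
\item $\delta_{Q_\infty\|P_\infty}=\delta_{P_\lambda\|Q_\lambda}$, the reverse scalar curve together with its floor $e^{-\lambda}$;
\item $\delta_{P_\infty\|Q_\infty}=\delta_{Q_\lambda\|P_\lambda}$, which is \eqref{eq:Poi_curve}.
\end{itemize}
Only the two-sided curve coincides with the scalar one. The paper's phrase ``symmetric with $y_0$ replacing $y_1$'' glosses over the same orientation issue. You should state the $\pi=1$ clause with $P$ and $Q$ interchanged, or for the two-sided curve only, rather than assert that a reflection alone reproduces \eqref{eq:Poi_curve}.
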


\begin{proof}
Equation \eqref{eq:multi_curve} is the countable-space identity \eqref{eq:NP2} in Lemma~2.1 on the countable space $\mathbb{Z}^{\mathcal{Y}}$, using the
shift relationship $Q_\infty(h) = p_\infty(h - e)$.

\textbf{(a) Reduction to Skellam when $|\mathcal{Y}| = 2$.} Let $\mathcal{Y} = \{y_0, y_1\}$. Then there are no other coordinates, and
the only possible deviations are swaps between $y_0$ and $y_1$. In Definition~\ref{def:limit-compound-poisson}, the non-dominant sets are
$\mathcal{Y}\setminus\{y_0\} = \{y_1\}$ and $\mathcal{Y}\setminus\{y_1\} = \{y_0\}$, hence $U_{y_1} \sim \mathrm{Poi}((1-\pi)\alpha_0(y_1))$ and $V_{y_0} \sim \mathrm{Poi}(\pi\alpha_1(y_0))$ and
\[
H_\infty = U_{y_1}(e_{y_1} - e_{y_0}) + V_{y_0}(e_{y_0} - e_{y_1}) = (U_{y_1} - V_{y_0})(e_{y_1} - e_{y_0}).
\]
Thus $H_\infty$ is one-dimensional along $e_{y_1} - e_{y_0}$ with coefficient $U_{y_1} - V_{y_0} \sim \mathrm{Skellam}(\lambda_0, \lambda_1)$ after identifying
$\lambda_0 = (1-\pi)\alpha_0(y_1)$ and $\lambda_1 = \pi\alpha_1(y_0)$. The shift by $e = e_{y_1} - e_{y_0}$ corresponds to the $+1$ shift in the scalar
Skellam-shift experiment, so \eqref{eq:multi_curve} reduces to \eqref{eq:Sk_curve}.

\textbf{(b) Boundary case $\pi = 0$.} Then $V_y \equiv 0$ a.s.\ for all $y \neq y_1$, and
\[
H_\infty = \sum_{y \neq y_0} U_y(e_y - e_{y_0}), \qquad U_y \sim \mathrm{Poi}(\alpha_0(y)) \text{ independently.}
\]
Under the coordinate map $h \mapsto (h_y)_{y \neq y_0}$, the law $P_\infty$ is identified with the independent vector $(U_y)_{y \neq y_0}$,
hence
\[
P_\infty \simeq \mathrm{Poi}(\alpha_0(y_1)) \otimes \bigotimes_{y \in \mathcal{Y}\setminus\{y_0,y_1\}} \mathrm{Poi}(\alpha_0(y)).
\]
Under $Q_\infty$, shifting by $e = e_{y_1} - e_{y_0}$ increments only the $y_1$-coordinate, so
\[
Q_\infty \simeq \bigl(1 + \mathrm{Poi}(\alpha_0(y_1))\bigr) \otimes \bigotimes_{y \in \mathcal{Y}\setminus\{y_0,y_1\}} \mathrm{Poi}(\alpha_0(y)).
\]
Thus the boundary experiment factors into a scalar Poisson-shift and common independent noise, hence \eqref{eq:multi_curve}
reduces to \eqref{eq:Poi_curve}. The case $\pi = 1$ is symmetric.
\end{proof}

\begin{remark}[Boundary compositions are scalar for privacy]
\label{rem:boundary-scalar-privacy}
At boundary compositions $\pi \in \{0,1\}$ the released histogram can remain genuinely multivariate when several rare coordinates have positive limiting
intensities. Nevertheless, the binary limit experiment relevant for privacy factors into a scalar Poisson-shift
on the switched coordinate and common independent noise on the remaining coordinates. Therefore the
boundary privacy curve is scalar, not genuinely multivariate.
\end{remark}

\section{A three-regime synthesis under convergent macroscopic scalings}

Taken together, Part~I and the present paper suggest the following three canonical regimes in terms
of the scaling parameter $a_n = e^{\varepsilon_0}/n$ (or, more generally, $1/n$-level deviation
intensities in Definition~\ref{def:sparse-error}), provided the macroscopic parameters of the array
under study converge along the sequence.

\begin{itemize}
\item \textbf{Sub-critical: $a_n \to 0$ (Gaussian/GDP).} Standardized jump sizes vanish and a Lindeberg
condition holds. The privacy loss is asymptotically Gaussian and the neighboring experiment is
asymptotically equivalent to a Gaussian shift experiment as in Part~I~\cite{Shv26}, with GDP
calibration in the sense of~\cite{DRS22}. Proposition~\ref{prop:power-law-gaussian} below gives a
concrete power-law RR example with $e^{\varepsilon_0(n)} = n^\alpha$, $\alpha \in (0,1)$.
\item \textbf{Critical: $a_n \to c^2 \in (0,\infty)$ (Poisson/Skellam/compound-Poisson).}
Theorem~\ref{thm:poisson-shift} gives the Poisson-shift limit for $\pi = 0$,
Theorem~\ref{thm:skellam-shift} gives the Skellam-shift limit for $\pi \in (0,1)$, and
Theorem~\ref{thm:multi-poisson-shift} gives the multivariate compound-Poisson limit for general alphabets.
\item \textbf{Super-critical: $a_n \to \infty$ (no privacy).} Proposition~\ref{prop:supercritical-rr}
proves asymptotic distinguishability for the canonical RR pair, and
Corollary~\ref{cor:supercritical-compositions} extends the same conclusion to arbitrary RR compositions.
Corollary~\ref{cor:supercritical-finite} gives the analogous finite-alphabet sparse-error statement.
\end{itemize}

\begin{remark}[Need for convergent macroscopic parameters]
Even in the critical window, a full-sequence limit need not exist without convergence of the macroscopic
composition parameters. For example, in binary RR with $e^{\varepsilon_0(n)} = c^2 n$, let
\[
k_n=
\begin{cases}
0, & n \text{ even},\\
\lfloor n/2\rfloor, & n \text{ odd}.
\end{cases}
\]
Along the even subsequence Theorem~\ref{thm:poisson-shift} yields the canonical Poisson-shift limit,
whereas along the odd subsequence Theorem~\ref{thm:skellam-shift} yields the interior Skellam-shift limit.
These two limit experiments have different privacy curves: the Poisson-shift reverse curve has the
support-mismatch floor $e^{-1/c^2}$ by Proposition~\ref{prop:poisson-curve}, while the interior
Skellam limit has no such floor by Corollary~\ref{cor:skellam-curve}. Hence the full sequence has no
single privacy-curve limit. The phase-diagram statements in this section should therefore be read as
descriptions of subsequential limits under convergent macroscopic parameters.
\end{remark}

\begin{proposition}[Power-law sub-critical RR is still Gaussian]
\label{prop:power-law-gaussian}
Let $k_n\in\{0,\dots,n-1\}$ with $\pi_n:=k_n/n\to\pi\in[0,1]$, and suppose
\[
e^{\varepsilon_0(n)}=n^\alpha,
\qquad
\alpha\in(0,1).
\]
Let $P_n:=T_{n,k_n}$, $Q_n:=T_{n,k_n+1}$, $\Lambda_{n,k_n}:=\log(dQ_n/dP_n)$. Write
\[
\delta_n:=\frac{1}{1+n^\alpha},
\qquad
v_n:=n\delta_n(1-\delta_n),
\qquad
\Delta_n:=1-2\delta_n,
\qquad
h_n:=\frac{\Delta_n}{\sqrt{v_n}}.
\]
Then $h_n\sim n^{(\alpha-1)/2}\to 0$, and
\[
\frac{\Lambda_{n,k_n}+\frac12 h_n^2}{h_n}\Longrightarrow N(0,1)
\quad\text{under }P_n,
\qquad
\frac{\Lambda_{n,k_n}-\frac12 h_n^2}{h_n}\Longrightarrow N(0,1)
\quad\text{under }Q_n.
\]
\end{proposition}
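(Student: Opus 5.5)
The plan is to reduce everything to the scalar count $K$ of output-$1$ messages and isolate the one user who differs between the two datasets. Fix one zero-user of the dataset with $k_n$ ones; the alternative $T_{n,k_n+1}$ turns exactly this user into a one-user. Let $R$ be the count of output-$1$ messages from the remaining $n-1$ users, and let $r$ be its pmf. Then $R$ is an independent sum of $\mathrm{Bin}(n-k_n-1,\delta_n)$ and $\mathrm{Bin}(k_n,1-\delta_n)$, so $\mathrm{Var}(R)=(n-1)\delta_n(1-\delta_n)=v_n(1+O(n^{-1}))$. Writing $p$ and $q$ for the pmfs of $K$ under $P_n$ and $Q_n$, we get
\[
p(m)=(1-\delta_n)r(m)+\delta_n r(m-1),\qquad q(m)=\delta_n r(m)+(1-\delta_n)r(m-1).
\]
With $\rho(m):=r(m-1)/r(m)$ this gives the exact formula
\[
L_{n,k_n}(m)=\frac{\delta_n+(1-\delta_n)\rho(m)}{(1-\delta_n)+\delta_n\rho(m)}.
\]
This parallels the affine form \eqref{eq:LR}, but it holds for every composition $k_n$. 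The key observation is that $h_n\to0$, because $v_n\sim n^{1-\alpha}\to\infty$. Hence the centerings $\pm\tfrac12h_n^2$ satisfy $\tfrac12h_n^2/h_n=\tfrac12h_n\to0$, and by Slutsky both claims reduce to showing $\Lambda_{n,k_n}/h_n\Rightarrow N(0,1)$ under $P_n$ and under $Q_n$.

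\textbf{Step 1 (central limit theorem for $K$).} Under either hypothesis, $K$ is a sum of independent bounded Bernoulli variables whose total variance is $v_n(1+o(1))\to\infty$. The Lindeberg condition therefore holds trivially, and $Z_n:=(K-\mu_R)/\sqrt{v_n}\Rightarrow N(0,1)$, where $\mu_R:=\mathbb E R$. Under $P_n$ the mean of $K$ is $\mu_R+\delta_n$, and under $Q_n$ it is $\mu_R+1-\delta_n$. Both offsets are $O(1)=o(\sqrt{v_n})$, so the same limit holds under both hypotheses. In particular $K$ lies in the bulk $\{|m-\mu_R|\le M\sqrt{v_n}\}$ with probability at least $1-\eta(M)$ uniformly in $n$, where $\eta(M)\to0$ as $M\to\infty$.

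\textbf{Step 2 (ratio expansion in the bulk; main obstacle).} We need
\[
\rho(m)=1+\frac{m-\mu_R}{v_n}+o\bigl(v_n^{-1/2}\bigr)
\]
uniformly for $|m-\mu_R|\le M\sqrt{v_n}$. A plain local CLT is not sharp enough: its relative error $O(v_n^{-1/2})$ is of the same order as the signal. I would first try a one-term Edgeworth local expansion for the lattice sum $R$ of independent Bernoulli variables:
\[
r(m)=\frac{\varphi(x_m)}{\sqrt{v_n}}\Bigl(1+\frac{\kappa_3}{6\sqrt{v_n}}H_3(x_m)\Bigr)+O\bigl(v_n^{-3/2}\bigr),\qquad x_m:=\frac{m-\mu_R}{\sqrt{v_n}},
\]
uniformly in $m$. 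Here $\kappa_3$ is the standardized third cumulant, which is bounded. The correction factor is a smooth function of $x_m$, so its ratio at $x_{m-1}$ and $x_m$ is $1+O(v_n^{-1})$. Meanwhile $\varphi(x_{m-1})/\varphi(x_m)=\exp\bigl((x_m-\tfrac12v_n^{-1/2})v_n^{-1/2}\bigr)$, which gives the displayed expansion on the bulk, where $\varphi(x_m)$ is bounded below. As a fallback, I would exploit that $r$ is a convolution of two binomials with explicit ratios. One can then write $\rho(m)$ as a conditional expectation of the binomial ratios given $R=m$, and control it through concentration of the split between the two binomial pieces.

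\textbf{Step 3 (expanding $\Lambda$).} Put $\epsilon:=\rho(m)-1=O(M v_n^{-1/2})$ on the bulk. Then
\[
L=\frac{1+(1-\delta_n)\epsilon}{1+\delta_n\epsilon},\qquad \Lambda=\log L=(1-2\delta_n)\epsilon+O(\epsilon^2)=\Delta_n\frac{K-\mu_R}{v_n}+o\bigl(v_n^{-1/2}\bigr).
\]
Dividing by $h_n=\Delta_n/\sqrt{v_n}$, and using $\Delta_n\to1$, we get $\Lambda/h_n=Z_n+o(1)$ uniformly on the bulk. Combining this with Step 1 and letting $M\to\infty$ gives $\Lambda_{n,k_n}/h_n\Rightarrow N(0,1)$ under both $P_n$ and $Q_n$. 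This proves the proposition. The required uniformity of the Edgeworth remainder in Step 2 is the only delicate point; every other step is a direct consequence of the structure above.
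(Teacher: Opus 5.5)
Your proposal is correct and follows essentially the same route as the paper: the one-summand decomposition with the exact ratio formula $L=g(\rho(K))$, a first-order Edgeworth local expansion giving $\rho(m)=1+(m-\mu)/v+O(v^{-1})$ uniformly on the bulk, the second-order Taylor expansion of $\log g$ at $1$, and a CLT restricted to $\{|Z_n|\le M\}$ with $M\to\infty$. The only difference is presentational: you observe up front that $\tfrac12 h_n^2/h_n=\tfrac12 h_n\to 0$, so by Slutsky both claims reduce to $\Lambda_{n,k_n}/h_n\Rightarrow N(0,1)$ under $P_n$ and under $Q_n$. The paper instead carries the $-\tfrac12 h_n^2 Z_n^2$ term and the shift $Z_n=Z_n'+h_n$ explicitly, and then absorbs them into $o(1)$ anyway.
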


\begin{proof}
\textbf{Step~1: one-summand decomposition.}
Under $T_{n,k_n}$ the dataset has $m_{0,n}:=n-k_n$ zero-input users and $m_{1,n}:=k_n$ one-input
users. Under $T_{n,k_n+1}$ one zero-input user is replaced by a one-input user.
Let $X_*\sim\mathrm{Bern}(\delta_n)$ denote the output of the replaced user under $P_n$,
and let $S$ be the sum of the remaining $n-1$ outputs, independent of $X_*$.
Then $K_n=S+X_*$ under $P_n$ and $K_n=S+Y_*$ under $Q_n$ where $Y_*\sim\mathrm{Bern}(1-\delta_n)$.
Writing $f(m):=P(S=m)$, the exact one-summand formulas are
\[
P_n\{K_n=m\}=(1-\delta_n)f(m)+\delta_n f(m-1),
\qquad
Q_n\{K_n=m\}=\delta_n f(m)+(1-\delta_n)f(m-1).
\]
Define the backward ratio $\rho(m):=f(m-1)/f(m)$ (for $f(m)>0$). Then
\begin{equation}
L_n(m)=\frac{Q_n\{K_n=m\}}{P_n\{K_n=m\}}
=\frac{\delta_n+(1-\delta_n)\rho(m)}{(1-\delta_n)+\delta_n\rho(m)}
=:g(\rho(m)),
\label{eq:LR-general-k}
\end{equation}
where $g(r):=[\delta_n+(1-\delta_n)r]/[(1-\delta_n)+\delta_n r]$.

\medskip
\textbf{Step~2: Taylor expansion of $\log g$.}
One checks
\[
g(1)=1,
\qquad
(\log g)'(1)=\Delta_n,
\qquad
(\log g)''(1)=-\Delta_n,
\]
so that for $|u|\le 1/2$,
\begin{equation}
\log g(1+u)=\Delta_n u-\frac{\Delta_n}{2}u^2+O(\Delta_n|u|^3).
\label{eq:logg-taylor}
\end{equation}

\medskip
\textbf{Step~3: Poisson-binomial ratio estimate.}
Let $\mu_S:=\mathbb E[S]$ and $v_S:=\mathrm{Var}(S)$. Since $S$ is a sum of $n-1$ independent Bernoulli
random variables (with two distinct parameters) and $v_S=v_n+O(1)\to\infty$,
the first-order Edgeworth expansion for lattice Poisson-binomial sums
(\cite{Pet75}, Chapter~VII) gives, uniformly for $|z_m|\le M$ with $z_m:=(m-\mu_S)/\sqrt{v_S}$,
\[
f(m)=\frac{1}{\sqrt{v_S}}\,\phi(z_m)\bigl[1+p_1(z_m)/\sqrt{v_S}+O(v_S^{-1})\bigr],
\]
where $\phi$ is the standard Gaussian density and $p_1$ is a polynomial depending on the
third cumulant of $S$. Substituting $z_{m-1}=z_m-1/\sqrt{v_S}$ and using
\[
\frac{\phi(z_m-1/\sqrt{v_S})}{\phi(z_m)}
=\exp\Bigl(\frac{z_m}{\sqrt{v_S}}-\frac{1}{2v_S}\Bigr)
=1+\frac{z_m}{\sqrt{v_S}}+O(v_S^{-1})
\]
on $|z_m|\le M$, together with $p_1(z_m-1/\sqrt{v_S})=p_1(z_m)+O(v_S^{-1/2})$, we obtain
\begin{equation}
\rho(m)=\frac{f(m-1)}{f(m)}=1+\frac{z_m}{\sqrt{v_S}}+O(v_S^{-1}),
\qquad\text{uniformly for }|z_m|\le M.
\label{eq:ratio-edgeworth}
\end{equation}

\medskip
\textbf{Step~4: CLT and assembly.}
Write $\mu_n:=\mathbb E_{P_n}[K_n]$ and $Z_n:=(K_n-\mu_n)/\sqrt{v_n}$.
Since $v_n\asymp n^{1-\alpha}\to\infty$ and each Bernoulli summand has variance at most $1/4$,
the Lindeberg condition is satisfied and $Z_n\Rightarrow N(0,1)$ under $P_n$.
Also $h_n=\Delta_n/\sqrt{v_n}\sim n^{(\alpha-1)/2}\to 0$.

Fix $M<\infty$. On the event $\{|Z_n|\le M\}$, the backward ratio at $K_n$ satisfies
\eqref{eq:ratio-edgeworth} (after noting $\mu_S=\mu_n+O(1)$ and $v_S=v_n+O(1)$, so
$z_{K_n}=Z_n+O(v_n^{-1/2})$):
\[
\rho(K_n)-1=\frac{Z_n}{\sqrt{v_n}}+O(v_n^{-1}).
\]
Since $\Delta_n\to 1$ and $\Delta_n/v_n=h_n^2/\Delta_n=O(h_n^2)$, substituting into
\eqref{eq:logg-taylor} with $u=\rho(K_n)-1$ gives
\begin{align*}
\Lambda_{n,k_n}
&=\Delta_n\cdot\frac{Z_n}{\sqrt{v_n}}
 -\frac{\Delta_n}{2}\cdot\frac{Z_n^2}{v_n}
 +O(h_n^2)\\
&=h_n Z_n-\frac{h_n^2}{2}Z_n^2+O(h_n^2).
\end{align*}
Therefore
\[
\frac{\Lambda_{n,k_n}+\frac12 h_n^2}{h_n}
=Z_n-\frac{h_n}{2}(Z_n^2-1)+O(h_n)
=Z_n+o_{P_n}(1)
\quad\text{on }\{|Z_n|\le M\}.
\]
Since $M$ is arbitrary and $Z_n\Rightarrow N(0,1)$, letting $M\to\infty$ yields
\[
\frac{\Lambda_{n,k_n}+\frac12 h_n^2}{h_n}\Longrightarrow N(0,1)
\quad\text{under }P_n.
\]

Under $Q_n$, the same CLT gives $Z_n':=(K_n-\mu_n-\Delta_n)/\sqrt{v_n}\Rightarrow N(0,1)$
and $Z_n=Z_n'+h_n$. Repeating the expansion with $Z_n=Z_n'+h_n$,
\[
\Lambda_{n,k_n}=h_n(Z_n'+h_n)-\frac{h_n^2}{2}(Z_n'+h_n)^2+O(h_n^2)
=h_n Z_n'+\frac12 h_n^2+o_{Q_n}(h_n),
\]
hence
\[
\frac{\Lambda_{n,k_n}-\frac12 h_n^2}{h_n}\Longrightarrow N(0,1)\quad\text{under }Q_n.
\]
\end{proof}

\subsection{Super-critical no-privacy lemma}

\begin{proposition}[Super-critical RR yields asymptotic distinguishability]
\label{prop:supercritical-rr}
Consider the canonical RR neighboring pair and suppose $a_n = e^{\varepsilon_0(n)}/n \to \infty$. Let $P_n, Q_n$ be the neighboring transcript laws (equivalently, the laws of $K_n$ in \eqref{eq:Kn_null}--\eqref{eq:Kn_alt}). Then
\[
\mathrm{TV}(P_n, Q_n) \to 1.
\]
In particular, for every fixed $\varepsilon \geq 0$, $\delta_{\mathrm{two}}(\varepsilon) \to 1$, and the likelihood ratio satisfies $L_n \to 0$ in
$P_n$-probability while $L_n \to \infty$ in $Q_n$-probability.
\end{proposition}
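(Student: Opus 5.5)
The plan is to exhibit a single event that separates the two hypotheses, namely $A_n=\{K_n=0\}$. Since $\mathrm{TV}(P_n,Q_n)\ge P_n(A_n)-Q_n(A_n)$, it is enough to show $P_n\{0\}\to 1$ and $Q_n\{0\}\to 0$.

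Under the null \eqref{eq:Kn_null}, $P_n\{0\}=(1-\delta_n)^n\ge 1-n\delta_n$. Because $\delta_n=(1+e^{\varepsilon_0(n)})^{-1}\le e^{-\varepsilon_0(n)}=1/(na_n)$, we get $n\delta_n\le 1/a_n\to 0$. So $P_n\{0\}\to 1$.

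Under the alternative \eqref{eq:Kn_alt}, $K_n=0$ requires the Bernoulli$(1-\delta_n)$ term to vanish, which gives $Q_n\{0\}\le \delta_n\to 0$. Hence $\mathrm{TV}(P_n,Q_n)\ge 1-1/a_n-\delta_n\to 1$. More generally, $P_n(K_n\ge 1)\le n\delta_n\to 0$ while $Q_n(K_n\ge 1)\ge 1-\delta_n\to 1$.

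For the privacy curves, take $A=\{K_n\ge1\}$ in \eqref{eq:delta}. This gives $\delta_{Q_n\|P_n}(\varepsilon)\ge Q_n(K_n\ge1)-e^\varepsilon P_n(K_n\ge1)\ge 1-\delta_n-e^\varepsilon/a_n$, which tends to $1$ for each fixed $\varepsilon$. Since $\delta\le 1$ always, $\delta_{\mathrm{two}}(\varepsilon)\to1$.

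For the likelihood ratio, use the affine formula \eqref{eq:LR}:
\[
L_n(0)=e^{-\varepsilon_0(n)}\to 0,
\qquad
L_n(m)\ge \frac{e^{\varepsilon_0(n)}-e^{-\varepsilon_0(n)}}{n}\,m\ \ge\ a_n-\frac1n\to\infty
\quad\text{for } m\ge1.
\]
Under $P_n$ we have $K_n=0$ with probability tending to one, so $L_n\to0$ in $P_n$-probability. Under $Q_n$ we have $K_n\ge1$ with probability tending to one, so $L_n\to\infty$ in $Q_n$-probability.

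No step is really an obstacle. The only point needing care is the bound $n\delta_n\le 1/a_n$, which is exactly where the super-critical assumption $a_n\to\infty$ enters; the rest is elementary.
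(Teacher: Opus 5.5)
Your proof is correct and follows essentially the same route as the paper: the separating event $\{K_n=0\}$, the bound $n\delta_n\le 1/a_n$, and the affine likelihood-ratio formula \eqref{eq:LR}. The differences are cosmetic. You bound the forward curve using the complementary event $\{K_n\ge1\}$, whereas the paper bounds the reverse curve using $\{K_n=0\}$. You also make the rates explicit, e.g.\ $\mathrm{TV}(P_n,Q_n)\ge 1-1/a_n-\delta_n$, and you bound $L_n$ uniformly on $\{K_n\ge1\}$ rather than only at $K_n=1$.
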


\begin{proof}
If $a_n \to \infty$, then $\delta_n = (1 + e^{\varepsilon_0(n)})^{-1} \sim e^{-\varepsilon_0(n)}$ and the null mean satisfies
\[
n\delta_n = \frac{n}{e^{\varepsilon_0(n)}} = \frac{1}{a_n} \to 0.
\]
Hence $K_n \to 0$ in $P_n$-probability.

Under $Q_n$, we have $K_n = S_{n-1} + B_n$ as in \eqref{eq:Kn_alt}. Here $S_{n-1} \to 0$ in probability since $(n-1)\delta_n \to 0$,
while $B_n \sim \mathrm{Bern}(1 - \delta_n) \to 1$ in probability because $\delta_n \to 0$. Thus $K_n \to 1$ in $Q_n$-probability.

Therefore the event $A_n := \{K_n = 0\}$ satisfies $P_n(A_n) \to 1$ while $Q_n(A_n) \to 0$, so
\[
\mathrm{TV}(P_n, Q_n) \geq P_n(A_n) - Q_n(A_n) \to 1.
\]
This implies $\delta_{\mathrm{two}}(0) = \mathrm{TV}(P_n, Q_n) \to 1$. Moreover, for any fixed $\varepsilon \geq 0$, taking $A = A_n$ in the
definition of $\delta_{P_n\|Q_n}(\varepsilon)$ gives
\[
\delta_{P_n\|Q_n}(\varepsilon) \geq P_n(A_n) - e^\varepsilon Q_n(A_n) \to 1,
\]
so $\delta_{\mathrm{two}}(\varepsilon) \to 1$ for every fixed $\varepsilon \geq 0$.

Finally, by \eqref{eq:LR}, $L_n(0) = e^{-\varepsilon_0(n)} \to 0$ and
\[
L_n(1) = e^{-\varepsilon_0(n)} + \frac{e^{\varepsilon_0(n)} - e^{-\varepsilon_0(n)}}{n} = e^{-\varepsilon_0(n)} + \frac{e^{\varepsilon_0(n)}}{n}\bigl(1 - e^{-2\varepsilon_0(n)}\bigr) \sim a_n \to \infty.
\]
Since $K_n \to 0$ under $P_n$ and $K_n \to 1$ under $Q_n$, this yields the claimed convergence in probability.
\end{proof}

\begin{corollary}[Super-critical RR for arbitrary compositions]
\label{cor:supercritical-compositions}
Assume shuffled binary RR with $a_n = e^{\varepsilon_0(n)}/n \to \infty$, and let $k = k(n) \in \{0, \ldots, n-1\}$ be any composition sequence. Let $P_{n,k} = T_{n,k}$ and $Q_{n,k} = T_{n,k+1}$ be the neighboring transcript laws. Then
\[
\mathrm{TV}(P_{n,k}, Q_{n,k}) \to 1.
\]
Equivalently, for every fixed $\varepsilon \geq 0$, the two-sided privacy curve satisfies $\delta^{(n,k)}_{\mathrm{two}}(\varepsilon) \to 1$. If in addition
$k/n \to \pi \in [0,1]$, then under $P_{n,k}$ one has $K_{n,k} \to k$ in probability, while under $Q_{n,k}$ one has
$K_{n,k+1} \to k+1$ in probability.
\end{corollary}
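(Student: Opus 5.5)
We reuse the centered decomposition of Section~4: under $T_{n,k}$ write $K_{n,k}=k+A_{n,k}-B_{n,k}$ with $A_{n,k}\sim\mathrm{Bin}(n-k,\delta_n)$ and $B_{n,k}\sim\mathrm{Bin}(k,\delta_n)$ independent. Under $T_{n,k+1}$ write $K_{n,k+1}=k+1+A'_{n,k}-B'_{n,k}$ with $A'_{n,k}\sim\mathrm{Bin}(n-k-1,\delta_n)$ and $B'_{n,k}\sim\mathrm{Bin}(k+1,\delta_n)$. Since the transcript law depends only on the released count, $\mathrm{TV}(P_{n,k},Q_{n,k})$ equals the TV distance between the laws of $K_{n,k}$ and $K_{n,k+1}$. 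The key observation is that all error counts have total mean at most $n\delta_n\le 1/a_n\to 0$, uniformly in $k$.

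\textbf{Step 1.} By Markov's inequality and a union bound,
\[
P(A_{n,k}\neq 0 \text{ or } B_{n,k}\neq 0)\le \mathbb E[A_{n,k}]+\mathbb E[B_{n,k}] = n\delta_n\le a_n^{-1}.
\]
Likewise $P(A'_{n,k}\neq 0 \text{ or } B'_{n,k}\neq 0)\le n\delta_n$. Hence $P_{n,k}(K=k)\ge 1-1/a_n$ and $Q_{n,k}(K=k+1)\ge 1-1/a_n$. These bounds do not depend on $k$, so no convergence of $k/n$ is needed for the TV claim.

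\textbf{Step 2.} Take the event $A_n:=\{K=k\}$. Then
\[
\mathrm{TV}(P_{n,k},Q_{n,k})\ge P_{n,k}(A_n)-Q_{n,k}(A_n)\ge (1-1/a_n)-1/a_n\to 1,
\]
using $Q_{n,k}(A_n)\le 1-Q_{n,k}(K=k+1)$. For the privacy curve, note that
\[
\delta_{P\|Q}(\varepsilon)\ge P_{n,k}(A_n)-e^{\varepsilon}Q_{n,k}(A_n)\ge 1-(1+e^\varepsilon)/a_n\to 1
\]
for each fixed $\varepsilon$. Together with $\delta_{\mathrm{two}}\le \mathrm{TV}\le 1$ and $\delta_{\mathrm{two}}(0)=\mathrm{TV}$, this gives the equivalence of the two formulations. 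The convergence-in-probability statements are exactly Step~1, read as $K_{n,k}-k\to 0$ and $K_{n,k+1}-(k+1)\to 0$ in probability; the assumption $k/n\to\pi$ is only cosmetic there.

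\textbf{Main obstacle.} I expect the only delicate point to be uniformity in $k$, including the case where $k$ varies arbitrarily with $n$. It is handled by bounding the \emph{total} error mean $(n-k)\delta_n+k\delta_n=n\delta_n$ rather than the two groups separately. The argument also uses $\delta_n\le e^{-\varepsilon_0(n)}$, which gives $n\delta_n\le 1/a_n$ exactly, with no asymptotic approximation needed.
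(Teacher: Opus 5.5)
Your proposal is correct and follows essentially the same route as the paper. Both arguments use the centered decomposition $K_{n,k}=k+A_{n,k}-B_{n,k}$, a Markov bound on the error counts (whose means are at most $n\delta_n\le 1/a_n$ uniformly in $k$), and the separating event $\{K=k\}$. Your version adds explicit nonasymptotic bounds, $1-2/a_n$ for the total variation and $1-(1+e^{\varepsilon})/a_n$ for $\delta_{P\|Q}(\varepsilon)$. The paper instead states only the limits and refers the privacy-curve step back to Proposition~\ref{prop:supercritical-rr}.
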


\begin{proof}
Write $K_{n,k} = k + A_{n,k} - B_{n,k}$ as in \eqref{eq:Dnk}. Since
\[
E[A_{n,k}] = (n-k)\delta_n \leq n\delta_n = \frac{1}{a_n} \to 0, \qquad E[B_{n,k}] = k\delta_n \leq n\delta_n = \frac{1}{a_n} \to 0,
\]
Markov's inequality gives $A_{n,k} \to 0$ and $B_{n,k} \to 0$ in probability. Hence $K_{n,k} \to k$ in $P_{n,k}$-probability.

Likewise, under $Q_{n,k}$, $K_{n,k+1} = k + 1 + A'_{n,k} - B'_{n,k}$ with
\[
E[A'_{n,k}] = (n-k-1)\delta_n \leq \frac{1}{a_n} \to 0, \qquad E[B'_{n,k}] = (k+1)\delta_n \leq \frac{1}{a_n} \to 0.
\]
Thus $K_{n,k+1} \to k+1$ in $Q_{n,k}$-probability.

Therefore the event $A_n := \{K = k\}$ satisfies
\[
P_{n,k}(A_n) \to 1,
\qquad
Q_{n,k}(A_n) \to 0.
\]
Hence $\mathrm{TV}(P_{n,k}, Q_{n,k}) \to 1$. The conclusion
$\delta^{(n,k)}_{\mathrm{two}}(\varepsilon) \to 1$ then follows from
Proposition~\ref{prop:supercritical-rr} by the same argument.
\end{proof}

\begin{corollary}[Finite-alphabet super-critical sparse-error regime]
\label{cor:supercritical-finite}
Suppose there exist dominant outputs $y_0 \neq y_1$ such that
\[
n\bigl(1 - W^{(n)}_0(y_0)\bigr) \to 0, \qquad n\bigl(1 - W^{(n)}_1(y_1)\bigr) \to 0.
\]
Let $k = k(n) \in \{0, \ldots, n-1\}$ be any composition sequence and define $H_{n,k}$ as in \eqref{eq:Hnk}. Then under $T_{n,k}$,
\[
H_{n,k} \to 0 \quad \text{in probability,}
\]
while under $T_{n,k+1}$ (centered using $k$ as in \eqref{eq:Hnk}),
\[
H_{n,k} \to e_{y_1} - e_{y_0} \quad \text{in probability.}
\]
Consequently, $\mathrm{TV}(P_n, Q_n) \to 1$ and $\delta_{\mathrm{two}}(\varepsilon) \to 1$ for every fixed $\varepsilon \geq 0$.
\end{corollary}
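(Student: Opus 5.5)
Our plan is to mirror the proof of Corollary~\ref{cor:supercritical-compositions}, replacing the binomial error counts by the multinomial decomposition of Step~1 in the proof of Theorem~\ref{thm:multi-poisson-shift}.

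Under $T_{n,k}$ the histogram splits as $N_{n,k}=N^{(0)}_{n,k}+N^{(1)}_{n,k}$, with independent multinomials $\mathrm{Mult}(n-k,W^{(n)}_0)$ and $\mathrm{Mult}(k,W^{(n)}_1)$. By \eqref{eq:Hnk_decomp} and Step~2 of that proof, $H_{n,k}=\Phi(D^{(0)}_{n,k},D^{(1)}_{n,k})$, where the deviation vectors count the users of each group who did not emit their dominant output. The event $\{H_{n,k}\neq 0\}$ is therefore contained in the event that some user deviates. This uses only that $\Phi(0,0)=0$, so we never need $\Phi$ to be injective, and the reduction is valid for an arbitrary finite alphabet.

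The main estimate is a union bound:
\[
T_{n,k}(H_{n,k}\neq 0)\le (n-k)\bigl(1-W^{(n)}_0(y_0)\bigr)+k\bigl(1-W^{(n)}_1(y_1)\bigr)\le n p_{0,n}+n p_{1,n}\to 0 .
\]
Here we use the hypothesis $np_{b,n}\to 0$, and the bound is uniform in $k$. Under $T_{n,k+1}$ the same bound, now with group sizes $(n-k-1,k+1)$, shows that $\tilde H_n:=N_{n,k+1}-(n-k-1)e_{y_0}-(k+1)e_{y_1}$ equals $0$ with probability tending to $1$. Since $H_{n,k}=\tilde H_n+e_{y_1}-e_{y_0}$ (Step~6 of the proof of Theorem~\ref{thm:multi-poisson-shift}), it follows that $H_{n,k}=e_{y_1}-e_{y_0}$ with probability tending to $1$. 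Because $\mathbb{Z}^{\mathcal Y}$ is discrete, these statements are exactly convergence in probability.

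For the consequences, take $A_n:=\{H_{n,k}=0\}$. We have $P_n(A_n)\to1$. Since $y_0\neq y_1$, the vector $e_{y_1}-e_{y_0}$ is nonzero, so $Q_n(A_n)\to0$, and hence $\mathrm{TV}(P_n,Q_n)\ge P_n(A_n)-Q_n(A_n)\to 1$. For every fixed $\varepsilon$, definition \eqref{eq:delta} applied with the set $A_n$ gives $\delta_{P_n\|Q_n}(\varepsilon)\ge P_n(A_n)-e^\varepsilon Q_n(A_n)\to1$. Together with $\delta_{\mathrm{two}}\le 1$, this yields $\delta_{\mathrm{two}}(\varepsilon)\to1$.

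We do not see any real obstacle. The only point needing care is to use the bound $k,\,n-k\le n$, so that the argument does not require $k/n$ to converge.
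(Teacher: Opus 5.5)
Your proposal is correct and follows essentially the same route as the paper. The paper uses the same union bound $T_{n,k}(H_{n,k}\neq 0)\le np_{0,n}+np_{1,n}$, the same bound for the $(k+1)$-centered statistic followed by the deterministic shift $e_{y_1}-e_{y_0}$, and the same separating event $\{H_{n,k}=0\}$; you additionally spell out $e_{y_1}-e_{y_0}\neq 0$ and the lower bound $\delta_{P_n\|Q_n}(\varepsilon)\ge P_n(A_n)-e^\varepsilon Q_n(A_n)$ together with $\delta_{\mathrm{two}}\le 1$, which the paper leaves implicit.
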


\begin{proof}
Let $p_{0,n} := 1 - W^{(n)}_0(y_0)$ and $p_{1,n} := 1 - W^{(n)}_1(y_1)$. Under $T_{n,k}$, each 0-user outputs a
non-dominant symbol with probability $p_{0,n}$ and each 1-user outputs a non-dominant symbol with
probability $p_{1,n}$. Hence, by the union bound,
\[
P_{T_{n,k}}(H_{n,k} \neq 0) \leq (n-k)p_{0,n} + k p_{1,n} \leq n p_{0,n} + n p_{1,n} \to 0.
\]
So $H_{n,k} \to 0$ in probability.

Under $T_{n,k+1}$, define the $(k+1)$-centered statistic
\[
\tilde{H}_{n,k+1} := N_{n,k+1} - (n-k-1)e_{y_0} - (k+1)e_{y_1}.
\]
The same union-bound argument gives $\tilde{H}_{n,k+1} \to 0$ in probability. But $H_{n,k} = \tilde{H}_{n,k+1} + e_{y_1} - e_{y_0}$,
so $H_{n,k} \to e_{y_1} - e_{y_0}$ in probability under $T_{n,k+1}$. The event $\{H_{n,k} = 0\}$ therefore separates the two
hypotheses with asymptotic probability 1, which implies $\mathrm{TV}(P_n, Q_n) \to 1$ and hence $\delta_{\mathrm{two}}(\varepsilon) \to 1$
for each fixed $\varepsilon$.
\end{proof}

\begin{remark}[Continuity at the regime boundaries]
The critical Poisson and Skellam families interpolate continuously between the Gaussian and no-privacy edges once the direction of $c$ is interpreted correctly.
\begin{enumerate}
\item[(i)] \textbf{Gaussian edge ($c \downarrow 0$).} Since $\lambda = c^{-2} \to \infty$, the Poisson and Skellam counts admit
a Gaussian approximation after centering and scaling. Concretely, if $J_c \sim \mathrm{Poi}(c^{-2})$ and
$Z_c := c(J_c - c^{-2})$, then under the Poisson-shift null $Z_c \Rightarrow \mathcal{N}(0,1)$ (by the CLT for Poisson, see e.g.\ \cite{vdV98}), while under the alternative $c\bigl((1 + J_c) - c^{-2}\bigr) = Z_c + c \Rightarrow \mathcal{N}(c, 1)$. Likewise, for the Skellam family with fixed $\pi \in (0,1)$, if $D_c \sim \mathrm{Skellam}((1-\pi)c^{-2}, \pi c^{-2})$ and $\hat{Z}_c := c(D_c - (1-2\pi)c^{-2})$, then $\hat{Z}_c \Rightarrow \mathcal{N}(0,1)$ under the null and $\hat{Z}_c + c \Rightarrow \mathcal{N}(c,1)$ under the alternative. Thus, as $c \downarrow 0$, the critical non-Gaussian families match the GDP curve with Gaussian shift parameter $\mu = c$ to first order, exactly as predicted by the sub-critical theory of Part~I~\cite{Shv26}.
\item[(ii)] \textbf{No-privacy edge ($c \uparrow \infty$).} Here $\lambda = c^{-2} \downarrow 0$. In the Poisson family, $P_c = \mathrm{Poi}(\lambda)$ concentrates on 0 while $Q_c = 1 + \mathrm{Poi}(\lambda)$ concentrates on 1; in the Skellam family, both Poisson means vanish and the pair again collapses to $(\delta_0, \delta_1)$. Hence for every fixed $\varepsilon \geq 0$, $\delta_{\mathrm{two},c}(\varepsilon) \to 1$.
\end{enumerate}
Thus the critical families connect continuously to the Gaussian/GDP edge as $c \downarrow 0$ and to the no-privacy edge as $c \uparrow \infty$.
\end{remark}

\section{Comparison with existing amplification bounds}
\label{sec:comparison-existing-bounds}

The generic amplification theorems of Balle et al.~\cite{BBG19} and Feldman et al.~\cite{FMT21}
are indispensable in the classical regime of many small contributions. The critical window studied
here,
\[
e^{\varepsilon_0(n)}\sim c^2 n,
\qquad
\lambda:=c^{-2},
\]
is structurally different: in binary shuffled randomized response the local flip probability satisfies
\[
\delta_n=\frac{1}{1+e^{\varepsilon_0(n)}}\sim \frac{\lambda}{n},
\qquad
n\delta_n\to \lambda,
\]
so the total number of exceptional local reports is of order one rather than order $n$. This is why
Theorem~\ref{thm:poisson-shift} yields a Poisson-shift limit and Proposition~\ref{prop:poisson-curve}
yields a non-vanishing support-mismatch floor. In particular,
\[
\delta_{\mathrm{two}}(\varepsilon)\ge e^{-1/c^2}
\qquad \text{for all } \varepsilon\ge 0
\]
in the canonical Poisson-shift limit. We also note that Rényi differential privacy~\cite{Mir17}
provides another lens on shuffle amplification~\cite{GKL21}, but is not the focus of the present
Le~Cam-theoretic analysis.

\paragraph{Balle et al.~\cite{BBG19}.}
The privacy-blanket framework of~\cite{BBG19} introduces a random blanket-user count
$M_n\sim \mathrm{Bin}(n,\gamma_n)$, where $\gamma_n$ depends on the local privacy level. For
binary randomized response in the critical window $e^{\varepsilon_0(n)}=c^2 n$, one computes
\[
\mathbb E M_n = n\gamma_n \to \frac{2}{c^2}.
\]
Thus even inside the blanket decomposition there are only $O(1)$ blanket users in the critical
window: the many-blanket-user regime on which the amplification bound of~\cite{BBG19} relies
does not materialize. More concretely, the simplified asymptotic corollary stated there requires
\[
\varepsilon_0 \le \frac{1}{2}\log\frac{n}{\log(1/\delta)},
\]
whereas in the critical window $\varepsilon_0(n)=\log n + O(1)$, so this hypothesis fails for every
fixed nontrivial $\delta\in(0,1)$. The blanket-based bound therefore yields no finite
privacy guarantee in the critical regime.

\paragraph{Feldman et al.~\cite{FMT21}.}
The exact reduction of~\cite{FMT21} expresses the shuffled neighboring pair in terms of a hidden
count $C_n\sim \mathrm{Bin}(n-1,e^{-\varepsilon_0(n)})$. Under the critical scaling,
\[
\mathbb E C_n=(n-1)e^{-\varepsilon_0(n)}\to \frac{1}{c^2},
\qquad
C_n\Longrightarrow \mathrm{Poi}(1/c^2).
\]
So the exact representation in~\cite{FMT21} already reveals that the relevant hidden count
converges to a Poisson law with constant mean. However, the closed-form amplification bound
of~\cite{FMT21} is built on an $n^{-1/2}$ amplification template, which predicts
$\delta\to 0$ as $\varepsilon\to\infty$, whereas the Poisson-shift limit of
Theorem~\ref{thm:poisson-shift} establishes a strictly positive floor
$\delta_{\mathrm{two}}(\varepsilon)\ge e^{-1/c^2}$ for all $\varepsilon\ge 0$.

\medskip

The comparison is structural. The critical window $e^{\varepsilon_0}\asymp n$ is the boundary
at which the hidden counts in blanket/clone decompositions stop diverging and instead converge
to Poisson laws with constant means. Once that happens, privacy loss is governed by a Poisson
number of macroscopic jumps, not by Gaussian fluctuations of $n$ small increments. In this
regime the blanket-based bound of~\cite{BBG19} yields no finite guarantee, while the $n^{-1/2}$
amplification template of~\cite{FMT21} misses the Poisson floor.

\section{Discussion}

\begin{tcolorbox}[keybox]
\textbf{Why Poisson (not Gaussian) at criticality.} In the critical scaling $a_n = e^{\varepsilon_0}/n \to c^2$, each user makes an ``error'' with probability $\delta_n \asymp 1/n$. The number of errors is $O(1)$ and converges to Poisson; each error causes a macroscopic jump in the centered histogram and hence in the log-likelihood ratio. This is the classical ``law of small numbers'' mechanism behind Poisson approximation.

\textbf{Relation to Takagi--Liew \cite{TL26}.} Independently, Takagi and Liew~\cite{TL26} develop an asymptotic blanket-divergence framework for shuffle privacy beyond pure LDP, including Gaussian local randomizers in the sub-critical regime where classical CLT behavior still governs the shuffled privacy loss. The present paper is complementary: it resolves the critical Poisson/Skellam/compound-Poisson frontier, where error probabilities are of order $1/n$, macroscopic jumps survive in the limit, and the classical Lindeberg conditions underlying Gaussian approximations fail.

\textbf{A unified L\'{e}vy--Khintchine limit theory.} A natural open direction is to develop a unified universality theory in which both Gaussian and Poisson components can coexist, analogous to classical L\'{e}vy--Khintchine limits for sums of independent random variables. Heuristically, this should occur when $a_n \to 0$ but so slowly that rare deviations produce an $O(1)$ compound-Poisson component while the bulk still contributes a Gaussian fluctuation.

\textbf{Choosing $\varepsilon_0(n)$ for target $(\varepsilon, \delta)$.} The phase diagram gives guidance for protocol design. Roughly: (i) if one targets moderate $\delta$ and fixed central $\varepsilon$, staying in the sub-critical regime $a_n \ll 1$ keeps the privacy loss in the Gaussian/GDP world covered by Part~I when its assumptions apply; (ii) pushing $\varepsilon_0$ to the critical boundary $\varepsilon_0 \approx \log n + O(1)$ enters the non-Gaussian regime and requires Poisson/Skellam/compound-Poisson calibration; (iii) if $\varepsilon_0$ grows faster than $\log n$ (super-critical), privacy collapses. At criticality one must also track the macroscopic composition parameters, since oscillating compositions can lead to different subsequential limits.
\end{tcolorbox}

\appendix

\section{Technical lemmas}

This appendix collects explicit bounds used in the proofs.

\begin{lemma}[Binomial $\to$ Poisson in total variation (explicit coupling)]
Let $S \sim \mathrm{Bin}(m, p)$ with $p \in (0,1)$ and let $N \sim \mathrm{Poi}(mp)$. Then
\begin{equation}
\mathrm{TV}(\mathcal{L}(S), \mathrm{Poi}(mp)) \leq mp(1 - e^{-p}) \leq mp^2. \label{eq:A1}
\end{equation}
\end{lemma}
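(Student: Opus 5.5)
The plan is to prove Lemma~A.1 with an explicit per-summand coupling and then the tensorization bound of Lemma~\ref{lem:tv-tensorization}. Write $S=\sum_{i=1}^m I_i$ with $I_i$ i.i.d.\ $\mathrm{Bern}(p)$, and $N=\sum_{i=1}^m N_i$ with $N_i$ i.i.d.\ $\mathrm{Poi}(p)$. By additivity of independent Poisson variables, $N\sim\mathrm{Poi}(mp)$. The map $(x_1,\dots,x_m)\mapsto\sum_i x_i$ is measurable, so Lemma~\ref{lem:tv-contraction} reduces the claim to bounding $\mathrm{TV}\bigl(\mathcal L(I_1,\dots,I_m),\mathcal L(N_1,\dots,N_m)\bigr)$. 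Iterating Lemma~\ref{lem:tv-tensorization} over the $m$ independent coordinates bounds this by $m\,\mathrm{TV}(\mathrm{Bern}(p),\mathrm{Poi}(p))$.

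It then suffices to show $\mathrm{TV}(\mathrm{Bern}(p),\mathrm{Poi}(p))\le p(1-e^{-p})$. I would compute this directly from the half-$\ell^1$ formula. The Bernoulli has masses $1-p$ at $0$ and $p$ at $1$. The Poisson has masses $e^{-p}$ at $0$, $pe^{-p}$ at $1$, and $1-e^{-p}-pe^{-p}$ on $\{j\ge2\}$. Since $e^{-p}\ge 1-p$ and $pe^{-p}\le p$, the set where the Bernoulli exceeds the Poisson is $\{1\}$. Hence
\[
\mathrm{TV}=p-pe^{-p}=p(1-e^{-p}).
\]
Multiplying by $m$ gives the first inequality in \eqref{eq:A1}. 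The second follows from $1-e^{-p}\le p$.

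Equivalently, one can use Lemma~2.2 with a maximal coupling of $I_i$ and $N_i$, which is what the lemma's title refers to. The mismatch probability of that coupling equals the TV distance computed above, and a union bound over $i$ gives the same result.

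There is no real obstacle. The only care needed is the sign check that $\{1\}$ is the unique excess set, which rests on the elementary inequalities $e^{-p}\ge1-p$ and $e^{-p}\le1$.
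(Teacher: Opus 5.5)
Your proof is correct and follows essentially the paper's route. Both arguments split the count into $m$ independent Bernoulli-versus-$\mathrm{Poi}(p)$ summands, pay $p(1-e^{-p})$ per summand, union-bound or tensorize, and push through the sum map. The only difference is that the paper writes the per-summand coupling out explicitly ($X_i:=1$ on $\{N_i\ge 1\}$, and $X_i\sim\mathrm{Bern}(q)$ on $\{N_i=0\}$ with $q=(p-1+e^{-p})e^{p}$); its mismatch probability is exactly $p(1-e^{-p})$, so it is a maximal coupling realizing the total variation you compute from the half-$\ell^1$ formula.
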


\begin{proof}
We give an explicit coupling. Let $N_1, \ldots, N_m$ be i.i.d.\ $\mathrm{Poi}(p)$ and set $N := \sum_{i=1}^m N_i$.
Then $N \sim \mathrm{Poi}(mp)$.

For each $i$, define a Bernoulli random variable $X_i$ from $N_i$ by
\[
X_i := \begin{cases} 1, & N_i \geq 1, \\ \mathrm{Bern}(q), & N_i = 0, \end{cases} \qquad q := \frac{p - (1 - e^{-p})}{e^{-p}}.
\]
First note that $q \in [0,1]$. Indeed, since $1 - e^{-p} \leq p$, the numerator is nonnegative and hence $q \geq 0$.
Also
\[
q = \frac{p - 1 + e^{-p}}{e^{-p}} = 1 + (p-1)e^p = 1 - (1-p)e^p \leq 1,
\]
and the lower bound follows from $\log(1/(1-p)) \geq p$.

By construction,
\[
P(X_i = 1) = P(N_i \geq 1) + P(N_i = 0)\, q = (1 - e^{-p}) + e^{-p} \cdot \frac{p - (1 - e^{-p})}{e^{-p}} = p.
\]
Therefore $S := \sum_{i=1}^m X_i \sim \mathrm{Bin}(m,p)$, and $(S, N)$ is a coupling of $\mathrm{Bin}(m,p)$ and $\mathrm{Poi}(mp)$.

Under this coupling, $S \neq N$ can only happen if for some $i$ either (i) $N_i \geq 2$ (then $N$ counts at
least 2 from index $i$ but $S$ counts at most 1), or (ii) $N_i = 0$ but $X_i = 1$ (then $S$ counts 1 but $N$
counts 0). By the union bound,
\[
P(S \neq N) \leq m\bigl(P(N_1 \geq 2) + P(N_1 = 0, X_1 = 1)\bigr).
\]
Now $P(N_1 \geq 2) = 1 - e^{-p}(1 + p)$ and $P(N_1 = 0, X_1 = 1) = e^{-p} q = p - (1 - e^{-p})$ by definition of $q$.
Summing gives
\[
P(S \neq N) \leq mp(1 - e^{-p}).
\]
Apply Lemma~2.2 to conclude the first inequality in \eqref{eq:A1}. The second follows from $1 - e^{-p} \leq p$.
\end{proof}

\begin{lemma}[Poisson parameter perturbation]
Let $N \sim \mathrm{Poi}(\lambda)$ and $N' \sim \mathrm{Poi}(\lambda')$ with $\lambda, \lambda' \geq 0$. Then
\[
\mathrm{TV}(\mathrm{Poi}(\lambda), \mathrm{Poi}(\lambda')) \leq 1 - e^{-|\lambda - \lambda'|} \leq |\lambda - \lambda'|.
\]
\end{lemma}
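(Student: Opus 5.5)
Without loss of generality assume $\lambda \le \lambda'$ and write $\eta := \lambda' - \lambda \ge 0$; the statement is symmetric, so this costs nothing. The plan is a coupling argument combined with Lemma~2.2, mirroring the proof of Lemma~A.1. Let $N \sim \mathrm{Poi}(\lambda)$ and $R \sim \mathrm{Poi}(\eta)$ be independent, and set $N' := N + R$. By additivity of independent Poisson variables (the same fact used in the proof of Proposition~\ref{prop:skellam-monotonicity}), $N' \sim \mathrm{Poi}(\lambda')$. Thus $(N, N')$ is a coupling of $\mathrm{Poi}(\lambda)$ and $\mathrm{Poi}(\lambda')$.

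Under this coupling, $N \ne N'$ holds exactly on the event $\{R \ge 1\}$, so
\[
P(N \neq N') = P(R \geq 1) = 1 - e^{-\eta} = 1 - e^{-|\lambda - \lambda'|}.
\]
Lemma~2.2 then gives the first inequality. The second follows from the elementary bound $1 - e^{-x} \le x$ for $x \ge 0$, which holds because $x \mapsto x - 1 + e^{-x}$ vanishes at $0$ and has nonnegative derivative $1 - e^{-x}$.

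There is one edge case. If $\lambda = 0$ (or $\eta = 0$), we interpret $\mathrm{Poi}(0)$ as the point mass at $0$, and the argument goes through verbatim. There is no real obstacle here: the only point needing care is that additivity is used with independent summands, which holds by construction of the coupling.
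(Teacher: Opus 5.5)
Your proposal is correct and matches the paper's proof essentially line for line. Both use the additive coupling $N' = N + R$ with independent $R \sim \mathrm{Poi}(\lambda'-\lambda)$, observe that $N \neq N'$ exactly when $R \ge 1$, apply Lemma~2.2, and finish with $1 - e^{-x} \le x$.
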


\begin{proof}
Assume without loss of generality that $\lambda' \geq \lambda$. Let $M \sim \mathrm{Poi}(\lambda)$ and $R \sim \mathrm{Poi}(\lambda' - \lambda)$ be
independent, and set $M' := M + R$. Then $M' \sim \mathrm{Poi}(\lambda')$. Under this coupling, $M \neq M'$ iff $R \geq 1$,
hence
\[
P(M \neq M') = 1 - e^{-(\lambda' - \lambda)} = 1 - e^{-|\lambda - \lambda'|}.
\]
Apply Lemma~2.2 for the first inequality. The second uses $1 - e^{-x} \leq x$.
\end{proof}

\begin{lemma}[Multinomial $\to$ independent Poissons on rare categories]
Let $X \sim \mathrm{Mult}(m, p)$ on a finite alphabet $A$ with $p \in \Delta(A)$. Fix a subset $B \subseteq A$ and write $p_B := \sum_{b \in B} p_b$. Let $U = (U_b)_{b \in B}$ have independent coordinates $U_b \sim \mathrm{Poi}(m p_b)$. Then
\begin{equation}
\mathrm{TV}\bigl(\mathcal{L}((X_b)_{b \in B}), \mathcal{L}(U)\bigr) \leq m p_B (1 - e^{-p_B}). \label{eq:A3}
\end{equation}
\end{lemma}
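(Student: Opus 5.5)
The plan is to mimic the explicit coupling of Lemma~A.1, applied user by user, with the set of rare categories $B$ playing the role of a single ``success'' event. Write $X=\sum_{i=1}^m e_{Z_i}$ with $Z_i$ i.i.d.\ $\sim p$ on $A$, so that $(X_b)_{b\in B}=\sum_i (\mathbf 1\{Z_i=b\})_{b\in B}$. On the Poisson side, take i.i.d.\ vectors $N^{(i)}=(N^{(i)}_b)_{b\in B}$ with independent coordinates $N^{(i)}_b\sim\mathrm{Poi}(p_b)$. Then $U:=\sum_i N^{(i)}$ has independent $\mathrm{Poi}(mp_b)$ coordinates by Poisson additivity. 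Write $T_i:=\sum_{b\in B}N^{(i)}_b\sim \mathrm{Poi}(p_B)$. The case $p_B=0$ is trivial, so assume $p_B>0$.

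\textbf{Step 1: construct the per-user vector.} For each $i$, define the $B$-part of the $i$th user, $\xi_i\in\{0\}\cup\{e_b:b\in B\}$, from $N^{(i)}$ as follows.
\begin{itemize}
\item If $T_i=1$, set $\xi_i:=N^{(i)}$, which is a unit vector $e_b$.
\item If $T_i\ge 2$, set $\xi_i:=e_{b'}$ for one of the $b'$ with $N^{(i)}_{b'}\ge1$, chosen so as to preserve the category proportions. The simplest choice is to redraw $b'$ independently with probability $p_{b'}/p_B$.
\item If $T_i=0$, set $\xi_i:=e_{b'}$ with probability $q=1-(1-p_B)e^{p_B}$, with $b'\sim p_{\cdot}/p_B$ independent, and $\xi_i:=0$ otherwise.
\end{itemize}

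\textbf{Step 2: check the marginals.} Given $T_i=t\ge1$, the location of a Poisson point is multinomial with weights $p_b/p_B$. Hence in all cases, conditionally on $\xi_i\neq0$, the direction is distributed as $p_b/p_B$. Moreover $P(\xi_i\neq 0)=(1-e^{-p_B})+e^{-p_B}q=p_B$ exactly as in Lemma~A.1. So $P(\xi_i=e_b)=p_b$, the $\xi_i$ are i.i.d., and $\sum_i\xi_i\overset{d}=(X_b)_{b\in B}$. This gives a coupling of the two laws.

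\textbf{Step 3: bound the mismatch.} On the event $\{T_i=1\}$ we have $\xi_i=N^{(i)}$. Hence
\[
P\Bigl(\sum_i\xi_i\neq U\Bigr)\le m\bigl(P(T_1\ge2)+P(T_1=0,\xi_1\ne0)\bigr).
\]
The computation of Lemma~A.1 with $p$ replaced by $p_B$ gives
\[
P(T_1\ge2)+P(T_1=0,\xi_1\ne0)=1-e^{-p_B}(1+p_B)+p_B-(1-e^{-p_B})=p_B(1-e^{-p_B}).
\]
Lemma~2.2 then yields \eqref{eq:A3}.

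The main obstacle is Step~2: making sure the case $T_i\ge2$ yields the correct directional distribution while still keeping $\xi_i$ a function of $N^{(i)}$ plus independent randomization. Using independent redraw randomization for the direction on $\{T_i\neq1\}$ sidesteps this. Only the event $\{T_i=1\}$ must be matched exactly, and there the Poisson point's location already has law $p_b/p_B$, so the mixture over the three cases has the right law.
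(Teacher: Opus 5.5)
Your proof is correct, but it takes a different route from the paper.

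\textbf{The paper's route.} It does not build a per-user coupling. It passes to the totals $S=\sum_{b\in B}X_b\sim\mathrm{Bin}(m,p_B)$ and $N=\sum_{b\in B}U_b\sim\mathrm{Poi}(mp_B)$ and couples these by invoking Lemma~A.1 as a black box. It then uses that both vectors, conditionally on their total being $s$, are $\mathrm{Mult}(s,p(\cdot\mid B))$ (Poisson splitting on the Poisson side). So the allocations can share the same auxiliary randomness on $\{S=N\}$.

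\textbf{Your route.} You lift the construction inside the proof of Lemma~A.1 to vectors, user by user, and redo the computation of $q$ and of the mismatch probability with $p_B$ in place of $p$. Your marginal check is right: total mass $p_B$, and direction $p_b/p_B$ on every branch. Your mismatch computation $P(T_1\ge 2)+P(T_1=0,\xi_1\neq 0)=p_B(1-e^{-p_B})$ is also right.

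\textbf{What each approach buys.} The paper's route is more modular and locates the whole discrepancy in the totals. Because the conditional law given the total is the same kernel for both vectors, its reduction in fact shows that the vector TV equals $\mathrm{TV}(\mathrm{Bin}(m,p_B),\mathrm{Poi}(mp_B))$. Hence any sharper binomial--Poisson bound (e.g.\ Chen--Stein) transfers verbatim. Your coupling is more self-contained and more robust: it never uses that all users share the same conditional direction law $p(\cdot\mid B)$. It therefore extends unchanged to independent but non-identically distributed users, giving the bound $\sum_i p^{(i)}_B(1-e^{-p^{(i)}_B})$. The paper's conditioning-on-totals argument, as written, does not cover that case, since the conditional allocations given the total then no longer match.

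\textbf{A small wording point.} On $\{T_i\ge 2\}$ the independent redraw need not land on a category with $N^{(i)}_{b'}\ge 1$, contrary to your first description. This is harmless, because that event is counted as a mismatch anyway.
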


\begin{proof}
Condition on the total number of draws that fall in $B$. Let $S := \sum_{b \in B} X_b$. Then $S \sim \mathrm{Bin}(m, p_B)$ and, given $S = s$, the vector $(X_b)_{b \in B}$ is multinomial $\mathrm{Mult}(s, p(\cdot|B))$ over $B$ with conditional probabilities $p_b/p_B$.

On the Poisson side, let $(U_b)_{b \in B}$ be independent with $U_b \sim \mathrm{Poi}(m p_b)$ and set $N := \sum_{b \in B} U_b$.
Then $N \sim \mathrm{Poi}(m p_B)$. Conditional on $N = s$, the vector $(U_b)_{b \in B}$ is also multinomial $\mathrm{Mult}(s, p(\cdot|B))$
(Poisson splitting).

Therefore, the only discrepancy between $(X_b)_{b \in B}$ and $(U_b)_{b \in B}$ comes from the discrepancy
between the totals $S$ and $N$; conditional on $S = N$ the conditional allocations match exactly.
Formally, couple $S$ and $N$ using Lemma~A.1 with $(m, p) = (m, p_B)$ so that $P(S \neq N) \leq m p_B(1 - e^{-p_B})$. Given $(S, N)$, sample the conditional multinomial allocation inside $B$ using the same auxiliary
randomness when $S = N$, and arbitrarily otherwise. This produces a coupling of $(X_b)_{b \in B}$ and
$(U_b)_{b \in B}$ whose mismatch probability is at most $P(S \neq N)$. Apply Lemma~2.2 to obtain \eqref{eq:A3}.
\end{proof}

\section{Privacy-curve convergence for the full hybrid experiment}
\label{app:hybrid-privacy}

All spaces in this appendix are finite or finite-dimensional Euclidean, hence standard Borel. In
particular, the regular conditional laws used below exist.

\begin{lemma}[Common independent factor does not affect privacy curves]
\label{lem:common-independent-factor}
Let $(P,Q)$ be a binary experiment and let $R$ be any probability measure independent of both $P$ and $Q$. Then for every $\varepsilon \ge 0$,
\[
\delta_{Q \otimes R \| P \otimes R}(\varepsilon) = \delta_{Q \| P}(\varepsilon).
\]
\end{lemma}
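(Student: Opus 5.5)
We prove the two inequalities $\delta_{Q\otimes R\|P\otimes R}(\varepsilon)\le \delta_{Q\|P}(\varepsilon)$ and $\ge$ separately, working directly from the variational definition \eqref{eq:delta}. This avoids assuming $Q\ll P$ or any densities. Throughout, $P,Q$ live on $(\mathcal X,\mathcal F)$ and $R$ on $(\mathcal Z,\mathcal G)$; all spaces are standard Borel, as stated at the start of this appendix.

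\textbf{Lower bound.} Take any $A\in\mathcal F$ and test with the cylinder set $A\times\mathcal Z$. Then
\[
(Q\otimes R)(A\times\mathcal Z)-e^\varepsilon (P\otimes R)(A\times\mathcal Z)=Q(A)-e^\varepsilon P(A).
\]
Taking the supremum over $A$ gives $\delta_{Q\otimes R\|P\otimes R}(\varepsilon)\ge\delta_{Q\|P}(\varepsilon)$. Equivalently, this is Lemma~\ref{lem:tv-contraction}-type data processing under the coordinate projection.

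\textbf{Upper bound.} Take any measurable $C\subseteq\mathcal X\times\mathcal Z$. For $z\in\mathcal Z$, let $C_z:=\{x:(x,z)\in C\}$ be its section, which is measurable. By Fubini--Tonelli,
\[
(Q\otimes R)(C)-e^\varepsilon(P\otimes R)(C)=\int_{\mathcal Z}\bigl(Q(C_z)-e^\varepsilon P(C_z)\bigr)\,R(dz).
\]
For each $z$, the integrand is at most $\delta_{Q\|P}(\varepsilon)$ by definition. Since $R$ is a probability measure, the integral is at most $\delta_{Q\|P}(\varepsilon)$. Taking the supremum over $C$ finishes the upper bound.

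For the upper bound we could alternatively cite data processing through the Markov kernel that forgets $z$, together with the fact that $(x,z)\mapsto x$ maps the product pair to $(Q,P)$. However, data processing for privacy curves has only been written out for additive kernels (the proof of Proposition~\ref{prop:skellam-monotonicity}), so the section argument is cleaner and self-contained.

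The only point needing care is the measurability of $z\mapsto Q(C_z)$ and $z\mapsto P(C_z)$, so that the Fubini identity applies. This is the standard product-measure measurability fact for finite measures on product $\sigma$-algebras, and it is where the standing standard-Borel convention enters. No other step poses an obstacle.
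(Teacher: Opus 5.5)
Your proof is correct and follows the paper's argument essentially line for line. The paper also gets the upper bound by integrating $Q(C_z)-e^\varepsilon P(C_z)$ over the sections $C_z$ against $R$, and the lower bound from the cylinder sets $A\times\mathcal Z$. One small correction: measurability of $z\mapsto Q(C_z)$ follows from the usual monotone-class argument for product $\sigma$-algebras with finite measures, so the standard-Borel assumption is not actually needed at this step.
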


\begin{proof}
For any measurable $A\subseteq\mathcal X\times\mathcal Z$, write
\[
A_z:=\{x\in\mathcal X:(x,z)\in A\}.
\]
Then
\[
(Q\otimes R)(A)-e^\varepsilon(P\otimes R)(A)
=
\int\!\bigl(Q(A_z)-e^\varepsilon P(A_z)\bigr)\,R(dz)
\le \delta_{Q\|P}(\varepsilon)
\]
by the variational formula \eqref{eq:delta}. Taking the supremum over $A$ gives
\[
\delta_{Q\otimes R\|P\otimes R}(\varepsilon)\le \delta_{Q\|P}(\varepsilon).
\]
For the reverse inequality, fix any measurable $B\subseteq\mathcal X$ and take
$A=B\times\mathcal Z$. Then
\[
(Q\otimes R)(A)-e^\varepsilon(P\otimes R)(A)=Q(B)-e^\varepsilon P(B).
\]
Taking the supremum over $B$ yields
\[
\delta_{Q\otimes R\|P\otimes R}(\varepsilon)\ge \delta_{Q\|P}(\varepsilon).
\]
The two inequalities prove the claim.
\end{proof}

We keep the notation of Proposition~\ref{prop:hybrid-gaussian-compound-poisson}. In particular,
\[
G_n:=n^{-1/2}\Pi_G\widehat H_{n,k_n},
\qquad
Z_n:=\Pi_J\widehat H_{n,k_n},
\qquad
S_n=(G_n,Z_n),
\]
and under the neighboring alternative $T_{n,k_n+1}$ we use the same centering by $(n-k_n)\mu_0+k_n\mu_1$. We also write
\[
P_n^J:=\mathcal L(Z_n)\ \text{under }T_{n,k_n},
\qquad
Q_n^J:=\mathcal L(Z_n)\ \text{under }T_{n,k_n+1}.
\]
The next lemma resolves the interior-composition regime $\pi\in(0,1)$; the boundary cases are discussed separately below.

\begin{lemma}[Interior conditional smoothing for the dominant block]
\label{lem:interior-conditional-smoothing}
Assume the setting of Proposition~\ref{prop:hybrid-gaussian-compound-poisson} and, in addition, $\pi_n\to\pi\in(0,1)$. Define
\[
\theta_{b,n}:=\frac{W_b^{(n)}(y_{ba})}{W_b^{(n)}(y_{ba})+W_b^{(n)}(y_{bb})},
\qquad b\in\{0,1\}.
\]
For each $z\in M^\perp$ in the support of $Z_n$, let
\[
T_{b,n}(z):=N_{n,k_n}(D_b),
\qquad b\in\{0,1\},
\]
noting that $T_{b,n}(z)$ is $\sigma(Z_n)$-measurable because $\ker \Pi_J=M=\mathrm{span}\{g_0,g_1\}$ and adding an element of $M$ changes only the within-pair differences on $D_0,D_1$, not the pair totals. Let $R_{n,z}$ be the law of
\[
\Gamma_{n,z}:=\Psi_{n,z}(X_{0,n}^\circ,X_{1,n}^\circ),
\]
where $X_{0,n}^\circ\sim \mathrm{Bin}(T_{0,n}(z),\theta_{0,n})$ and $X_{1,n}^\circ\sim \mathrm{Bin}(T_{1,n}(z),\theta_{1,n})$ are independent and
\[
\Psi_{n,z}(x_0,x_1):=n^{-1/2}\Bigl((x_0-p_0T_{0,n}(z))g_0+(x_1-p_1T_{1,n}(z))g_1\Bigr).
\]
Then there exists a finite constant $C_{\mathrm{int}}=C_{\mathrm{int}}(p_0,p_1,\pi,\{\alpha_b(y)\})$ such that, for all sufficiently large $n$,
\begin{equation}
\int \mathrm{TV}\bigl(Q_{n,z}^{G\mid Z},R_{n,z}\bigr)\,Q_n^J(dz)
+\int \mathrm{TV}\bigl(P_{n,z}^{G\mid Z},R_{n,z}\bigr)\,P_n^J(dz)
\le \frac{C_{\mathrm{int}}}{\sqrt n},
\label{eq:B3-main}
\end{equation}
where $P_{n,z}^{G\mid Z}$ and $Q_{n,z}^{G\mid Z}$ are regular conditional laws of $G_n$ given $Z_n=z$ under $T_{n,k_n}$ and $T_{n,k_n+1}$, respectively.
\end{lemma}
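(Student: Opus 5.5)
The plan is to realize the conditional law of $G_n$ given $Z_n=z$ as a mixture, over a latent "who produced what" configuration, of laws that are exact images under $\Psi_{n,z}$ of independent binomials whose sizes are perturbed by $O(1)$. Then $R_{n,z}$ differs from each mixture component only by an $O(1)$ lattice shift, or an $O(1)$ change of trial number, of a binomial with variance of order $n$. Such a perturbation costs $O(n^{-1/2})$ in total variation, and convexity of $\mathrm{TV}$ under mixtures finishes the argument.

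\textbf{Step 1 (what $Z_n$ determines).} Since $\ker\Pi_J=M$ and $D_0\cap D_1=\varnothing$, the value $Z_n=z$ determines every count $N(y)$ for $y\notin D_0\cup D_1$ and the two pair totals $T_{b,n}(z)=N(D_b)$. The only information it leaves free is the split $x_b:=N(y_{ba})$ inside each pair. I will check that
\[
\Pi_G\widehat H_{n,k_n}=\bigl(x_0-p_0T_{0,n}(z)\bigr)g_0+\bigl(x_1-p_1T_{1,n}(z)\bigr)g_1+\eta_n(z),
\]
where $\eta_n(z)$ is $z$-measurable. The term $\eta_n(z)$ comes from the centering, since $N(D_b)$ differs from $(n-k_n)$ or $k_n$. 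After dividing by $\sqrt n$, $\eta_n(z)$ corresponds to a deterministic integer-lattice shift of $(x_0,x_1)$; it is either zero by construction of $\Psi_{n,z}$ or can be absorbed into Step 3 as an $O(1)$ shift.

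\textbf{Step 2 (latent decomposition).} Consider $T_{n,k_n}$ first. Introduce the latent variable $\xi$ recording, for each user, whether its output is dominant for its own group. It also records the set of cross outputs: rare outputs of $1$-users that land in $D_0$, and rare outputs of $0$-users that land in $D_1$. Let $m_b$ be the number of group-$b$ users with dominant output, and let $r_b$ be the number of cross outputs landing in $D_b$, so $T_{b,n}=m_b+r_b$. Conditionally on $\xi$ (which is compatible with $z$), the split is
\[
x_b=\mathrm{Bin}(m_b,\theta_{b,n})+C_b,
\]
where the two binomials and $C_0,C_1$ are independent and $0\le C_b\le r_b$. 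This holds exactly, because a group-$b$ user with output known to lie in $D_b$ produces $y_{ba}$ with probability $\theta_{b,n}$. Under $T_{n,k_n+1}$ the same description holds with group sizes $(n-k_n-1,k_n+1)$.

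\textbf{Step 3 (binomial perturbation).} By contraction (Lemma~\ref{lem:tv-contraction}) applied to $\Psi_{n,z}$, by Lemma~\ref{lem:tv-tensorization} across $b$, and by convexity of $\mathrm{TV}$ in the mixture over $\xi$ and over $C_b$, I obtain
\[
\mathrm{TV}\bigl(P^{G\mid Z}_{n,z},R_{n,z}\bigr)\le\sum_b \mathbb E\Bigl[\mathrm{TV}\bigl(\mathrm{Bin}(m_b,\theta_{b,n})+C_b,\ \mathrm{Bin}(m_b+r_b,\theta_{b,n})\bigr)\,\Big|\,Z_n=z\Bigr].
\]
I will then use the standard unimodality estimates
\[
\mathrm{TV}\bigl(\mathrm{Bin}(m,\theta)+1,\mathrm{Bin}(m,\theta)\bigr)\le C/\sqrt{m\theta(1-\theta)},
\]
and, since $\mathrm{Bin}(m+1,\theta)$ is a mixture of $\mathrm{Bin}(m,\theta)$ and its unit shift, the same order for one extra trial. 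These give the bound $C(1+2r_b)/\sqrt{m_b}$, since $\theta_{b,n}\to p_b\in(0,1)$. The interior assumption $\pi\in(0,1)$ enters here. It makes $m_b\ge c n$ outside an event of exponentially small probability (Chernoff), on which I bound $\mathrm{TV}\le1$. Integrating against $P_n^J(dz)$ turns the conditional expectation into an unconditional one, $\mathbb E[(1+r_b)]/\sqrt{cn}$. This is $O(n^{-1/2})$ because $\mathbb E r_b\le\sum_y nW^{(n)}_{1-b}(y)=O(1)$ by the sparse-error regime. The $Q_n$ term is identical.

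The main obstacle is Step 1 together with the careful definition of the conditional laws. One must verify that $\Psi_{n,z}$, with $T_{b,n}(z)$ read off from $z$, reproduces $G_n$ exactly, or up to a $z$-measurable lattice shift that is then also charged at $O(n^{-1/2})$. One must also confirm that conditioning on $Z_n$ and then on $\xi$ leaves the dominant-user splits as exact independent $\mathrm{Bin}(m_b,\theta_{b,n})$ draws. I would first try to do this by computing the joint law of $(\xi,\text{splits})$ directly from the multinomial structure in Step~1 of the proof of Theorem~\ref{thm:multi-poisson-shift}.
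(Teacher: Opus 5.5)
The architecture of your proposal matches the paper's proof. You condition on the latent rare configuration and write each within-pair count as a native $\mathrm{Bin}(m_b,\theta_{b,n})$ plus an independent contribution from at most $r_b$ cross messages. You then compare with $\mathrm{Bin}(m_b+r_b,\theta_{b,n})$ at cost $O(r_b)\cdot\sup_k P(\mathrm{Bin}(m_b,\theta_{b,n})=k)=O(r_b/\sqrt n)$, and integrate using $\pi\in(0,1)$ and $\mathbb E\,r_b=O(1)$. Using Chernoff instead of the paper's second-moment bound on $\{R_n>\kappa n\}$ is cosmetic.

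The gap is in Step~1, where you yourself located the difficulty. The offset $\eta_n(z)$ is not zero for the $\Psi_{n,z}$ of the statement, and it is not an integer-lattice shift. Since $g_0\perp g_1$ and $\|g_b\|^2=2$, one has $\Pi_G v=\sum_b\tfrac12\langle v,g_b\rangle g_b$ and $\langle\widehat H_{n,k_n},g_0\rangle=2x_0-T_{0,n}-(n-k_n)(2p_0-1)$. Hence
\[
\sqrt n\,G_n=\sum_{b\in\{0,1\}}\bigl(x_b-p_bT_{b,n}(z)\bigr)g_b+\eta_n(z),
\qquad
\eta_n(z)=\bigl(p_0-\tfrac12\bigr)\bigl(T_{0,n}(z)-(n-k_n)\bigr)g_0+\bigl(p_1-\tfrac12\bigr)\bigl(T_{1,n}(z)-k_n\bigr)g_1.
\]
Consider the $\sigma(Z_n)$-measurable event $\{T_{0,n}=n-k_n-1\}$. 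On it the $g_0$-coefficient of $\eta_n$ is $\tfrac12-p_0$, which is not an integer when $p_0\neq\tfrac12$. So $P^{G\mid Z}_{n,z}$ and $R_{n,z}$ live on disjoint cosets of the lattice $n^{-1/2}(\mathbb Z g_0+\mathbb Z g_1)$, and their total variation equals $1$. This event contains the event that exactly one $0$-user leaves $D_0$ and every other user is native-dominant. Under $T_{n,k_n}$ that smaller event has probability tending to $(1-\pi)\Lambda_0e^{-(1-\pi)\Lambda_0-\pi\Lambda_1}$, where $\Lambda_b:=\sum_{y\notin D_b}\alpha_b(y)$. This limit is positive as soon as $\Lambda_0>0$. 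A point-mass or unimodality bound only prices integer shifts, so Step~3 cannot absorb this offset.

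Consequently, with $\Psi_{n,z}$ exactly as written, the left-hand side of the lemma's bound stays bounded away from $0$ (for instance when $p_0\neq\tfrac12$ and $\Lambda_0>0$), so the literal statement cannot be proved. The paper's own proof passes over the same point. It asserts that $R_{n,z}$ differs from the conditional law only through the cross terms, which tacitly assumes $G_n=\Psi_{n,z}(X_{0,n},X_{1,n})$.

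The repair is to make your ``zero by construction'' option true by definition, by building the offset into the map:
\[
\Psi_{n,z}(x_0,x_1):=n^{-1/2}\Bigl(\sum_{b\in\{0,1\}}\bigl(x_b-p_bT_{b,n}(z)\bigr)g_b+\eta_n(z)\Bigr).
\]
Both hypotheses use the same centering $(n-k_n)\mu_0+k_n\mu_1$. So $T_{b,n}(z)$ and $\eta_n(z)$ are the same functions of $z$ under $T_{n,k_n}$ and $T_{n,k_n+1}$, and $R_{n,z}$ remains a common kernel. That is all Corollary~\ref{cor:full-hybrid-privacy-convergence} needs. With this definition your Steps~2--3 go through verbatim and give the $O(n^{-1/2})$ bound.
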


\begin{proof}
Fix one of the two hypotheses and let $m_{0,n}^\star,m_{1,n}^\star$ denote the corresponding group sizes:
\[
(m_{0,n}^\star,m_{1,n}^\star)=
\begin{cases}
(m_{0,n},m_{1,n}), & \text{under }T_{n,k_n},\\
(m_{0,n}-1,m_{1,n}+1), & \text{under }T_{n,k_n+1}.
\end{cases}
\]
Let $L_{0,n}$ be the number of $0$-input users whose output falls in $\mathcal Y\setminus D_0$, and let $L_{1,n}$ be the number of $1$-input users whose output falls in $\mathcal Y\setminus D_1$. Let $A_n$ be the number of $1$-input users whose output falls in $D_0$, and let $B_n$ be the number of $0$-input users whose output falls in $D_1$. Then $A_n\le L_{1,n}$ and $B_n\le L_{0,n}$. Conditioning on the full rare configuration (equivalently: on $Z_n=z$ together with the latent assignment of rare outputs to the two input groups), the counts
\[
X_{0,n}:=N_{n,k_n}(y_{0a}),
\qquad
X_{1,n}:=N_{n,k_n}(y_{1a})
\]
are independent and satisfy
\begin{align*}
X_{0,n}
&\overset d= \mathrm{Bin}(m_{0,n}^\star-L_{0,n},\theta_{0,n})+\mathrm{Bin}(A_n,\vartheta_{10,n}),\\
X_{1,n}
&\overset d= \mathrm{Bin}(m_{1,n}^\star-L_{1,n},\theta_{1,n})+\mathrm{Bin}(B_n,\vartheta_{01,n}),
\end{align*}
where
\[
\vartheta_{10,n}:=\frac{W_1^{(n)}(y_{0a})}{W_1^{(n)}(y_{0a})+W_1^{(n)}(y_{0b})},
\qquad
\vartheta_{01,n}:=\frac{W_0^{(n)}(y_{1a})}{W_0^{(n)}(y_{1a})+W_0^{(n)}(y_{1b})},
\]
with an arbitrary value when the denominator is zero (then the corresponding count $A_n$ or $B_n$ is automatically zero). Moreover,
\[
T_{0,n}(z)=(m_{0,n}^\star-L_{0,n})+A_n,
\qquad
T_{1,n}(z)=(m_{1,n}^\star-L_{1,n})+B_n.
\]
Hence $R_{n,z}$ is exactly the law obtained by replacing the cross terms $\mathrm{Bin}(A_n,\vartheta_{10,n})$ and $\mathrm{Bin}(B_n,\vartheta_{01,n})$ by the native terms $\mathrm{Bin}(A_n,\theta_{0,n})$ and $\mathrm{Bin}(B_n,\theta_{1,n})$.

Write
\[
S_{0,n}\sim \mathrm{Bin}(m_{0,n}^\star-L_{0,n},\theta_{0,n}),
\qquad
V_{0,n}\sim \mathrm{Bin}(A_n,\vartheta_{10,n}),
\qquad
V_{0,n}^\circ\sim \mathrm{Bin}(A_n,\theta_{0,n}),
\]
independently, and analogously define $S_{1,n},V_{1,n},V_{1,n}^\circ$ on the $D_1$-pair. Since
\[
S_{0,n}+V_{0,n}^\circ\sim \mathrm{Bin}(T_{0,n}(z),\theta_{0,n}),
\qquad
S_{1,n}+V_{1,n}^\circ\sim \mathrm{Bin}(T_{1,n}(z),\theta_{1,n}),
\]
it suffices, by contraction under the measurable map $\Psi_{n,z}$ and by Lemma~\ref{lem:tv-tensorization}, to control the two one-dimensional distances
\[
\mathrm{TV}\bigl(\mathcal L(S_{0,n}+V_{0,n}),\mathcal L(S_{0,n}+V_{0,n}^\circ)\bigr),
\qquad
\mathrm{TV}\bigl(\mathcal L(S_{1,n}+V_{1,n}),\mathcal L(S_{1,n}+V_{1,n}^\circ)\bigr).
\]
For the first term, couple $V_{0,n}$ and $V_{0,n}^\circ$ as sums of $A_n$ coupled Bernoulli pairs, one pair for each cross message, so that
\[
E\bigl|V_{0,n}-V_{0,n}^\circ\mid Z_n,\text{rare configuration}\bigr|
\le A_n\,|\vartheta_{10,n}-\theta_{0,n}|
\le A_n.
\]
Conditional on the coupled values $(V_{0,n},V_{0,n}^\circ)=(v,w)$, repeated one-step shifts give
\[
\mathrm{TV}\bigl(\mathcal L(S_{0,n}+v),\mathcal L(S_{0,n}+w)\bigr)
\le |v-w|\,\sup_k P(S_{0,n}=k).
\]
Taking conditional expectation yields
\[
\mathrm{TV}\bigl(\mathcal L(S_{0,n}+V_{0,n}),\mathcal L(S_{0,n}+V_{0,n}^\circ)\bigr)
\le A_n\,\sup_k P(S_{0,n}=k).
\]
The same argument on $D_1$ gives
\[
\mathrm{TV}\bigl(\mathcal L(S_{1,n}+V_{1,n}),\mathcal L(S_{1,n}+V_{1,n}^\circ)\bigr)
\le B_n\,\sup_k P(S_{1,n}=k).
\]
Now $\theta_{b,n}\to p_b\in(0,1)$, so for all large $n$,
\[
\theta_{b,n}(1-\theta_{b,n})\ge \frac12 p_b(1-p_b),
\qquad b\in\{0,1\},
\]
and the unimodal binomial mass bound gives
\[
\sup_k P(S_{b,n}=k)
\le \frac{c_b}{\sqrt{m_{b,n}^\star-L_{b,n}+1}},
\qquad
c_b:=\sqrt{\frac{2}{p_b(1-p_b)}}.
\]
Therefore, conditional on the rare configuration,
\begin{equation}
\mathrm{TV}\bigl(\mathcal L(G_n\mid Z_n,\text{rare configuration}),R_{n,Z_n}\bigr)
\le c_0\frac{A_n}{\sqrt{m_{0,n}^\star-L_{0,n}+1}}+c_1\frac{B_n}{\sqrt{m_{1,n}^\star-L_{1,n}+1}}.
\label{eq:B3-conditional-bound}
\end{equation}
Averaging over the latent rare configuration and then over $Z_n$ (tower property) gives the same bound for the conditional laws $P_{n,z}^{G\mid Z}$ or $Q_{n,z}^{G\mid Z}$.

It remains to integrate the right-hand side. Set
\[
\kappa:=\frac14\min\{\pi,1-\pi\}>0.
\]
Since $\pi_n\to\pi$, for all sufficiently large $n$ we have $m_{0,n}^\star\ge 3\kappa n$ and $m_{1,n}^\star\ge 3\kappa n$ under both hypotheses. Let $R_n:=L_{0,n}+L_{1,n}$. On the event $\{R_n\le \kappa n\}$,
\[
\frac{A_n}{\sqrt{m_{0,n}^\star-L_{0,n}+1}}+\frac{B_n}{\sqrt{m_{1,n}^\star-L_{1,n}+1}}
\le \frac{A_n+B_n}{\sqrt{2\kappa n}}
\le \frac{R_n}{\sqrt{2\kappa n}}.
\]
On the complementary event, the same quantity is bounded by $R_n$. Hence
\[
E\Bigl[\frac{A_n}{\sqrt{m_{0,n}^\star-L_{0,n}+1}}+\frac{B_n}{\sqrt{m_{1,n}^\star-L_{1,n}+1}}\Bigr]
\le \frac{E[R_n]}{\sqrt{2\kappa n}}+E\bigl[R_n\mathbf 1_{\{R_n>\kappa n\}}\bigr].
\]
Since $R_n$ is a Poisson-binomial sum of rare-output indicators,
\[
E[R_n]\le 2(\Lambda_0+\Lambda_1),
\qquad
E[R_n^2]\le 2(\Lambda_0+\Lambda_1)+4(\Lambda_0+\Lambda_1)^2
\]
for all large $n$, where
\[
\Lambda_b:=\sum_{y\notin D_b}\alpha_b(y), \qquad b\in\{0,1\}.
\]
By Cauchy--Schwarz and Markov,
\[
E\bigl[R_n\mathbf 1_{\{R_n>\kappa n\}}\bigr]
\le \frac{E[R_n^2]}{\kappa n}
=O(n^{-1}).
\]
Combining this with \eqref{eq:B3-conditional-bound} gives \eqref{eq:B3-main}. One may take, for example,
\[
C_{\mathrm{int}}
:=\Bigl(\sqrt{\frac{2}{p_0(1-p_0)}}+\sqrt{\frac{2}{p_1(1-p_1)}}\Bigr)
\Bigl(\frac{2(\Lambda_0+\Lambda_1)}{\sqrt{2\kappa}}+\frac{2(\Lambda_0+\Lambda_1)+4(\Lambda_0+\Lambda_1)^2}{\kappa}\Bigr).
\]
Since $\mathcal Y$ is finite, this depends only on $p_0,p_1,\pi$ and finitely many $\alpha_b(y)$; in particular it is controlled by $p_0,p_1,\pi$ and $\sup_{b,y}\alpha_b(y)$ once $\mathcal Y$ is fixed.
\end{proof}

\begin{remark}[Boundary compositions $\pi\in\{0,1\}$]
\label{rem:B3-boundary}
Lemma~\ref{lem:interior-conditional-smoothing} is an interior statement and is false at the boundary. Indeed, at $\pi=0$ one can choose rare transitions $0\to D_1$ with two positive intensities $\alpha_0(y_{1a}),\alpha_0(y_{1b})$ and choose the native split parameter $p_1$ so that
\[
p_1\neq \frac{\alpha_0(y_{1a})}{\alpha_0(y_{1a})+\alpha_0(y_{1b})}.
\]
On the projected jump event $Z_n=\Delta$, the unique $D_1$-message is then generated under $Q_n$ by the distinguished $1$-user and hence splits asymptotically with parameter $p_1$, whereas under $P_n$ it is generated by a rare $0\to D_1$ error and hence splits with the rare-error parameter above. Thus the conditional total variation stays bounded away from $0$. This causes no problem for privacy curves, however, because the boundary compositions are already covered by Sections~3--4: part~(b) of Corollary~\ref{cor:multi-curve-series}, together with Remark~\ref{rem:boundary-scalar-privacy}, identifies the boundary limit experiment as a scalar Poisson-shift on the switched coordinate plus common independent noise on the remaining coordinates. Hence the boundary privacy curve is exactly the one treated in Theorem~\ref{thm:poisson-shift}; in the binary case this is also the boundary specialization of Theorem~\ref{thm:skellam-shift}.
\end{remark}

\begin{corollary}[Privacy-curve convergence for the full hybrid experiment]
\label{cor:full-hybrid-privacy-convergence}
Assume the setting of Proposition~\ref{prop:hybrid-gaussian-compound-poisson}. Then for every fixed $\varepsilon\ge 0$,
\[
\delta_{Q_n\|P_n}(\varepsilon)\longrightarrow \delta_{Q_\infty\|P_\infty}(\varepsilon),
\qquad
P_\infty=\mathcal L(G,J),\quad Q_\infty=\mathcal L(G,J+\Delta).
\]
Equivalently,
\[
\delta_{Q_n\|P_n}(\varepsilon)\longrightarrow \delta_{\mathcal L(J+\Delta)\|\mathcal L(J)}(\varepsilon).
\]
More precisely:
\begin{enumerate}
\item[(i)] if $\pi\in(0,1)$, then for all sufficiently large $n$,
\[
\bigl|\delta_{Q_n\|P_n}(\varepsilon)-\delta_{Q_n^J\|P_n^J}(\varepsilon)\bigr|
\le \frac{C_{\mathrm{int}}(1+e^\varepsilon)}{\sqrt n},
\]
with $C_{\mathrm{int}}$ from Lemma~\ref{lem:interior-conditional-smoothing};
\item[(ii)] if $\pi\in\{0,1\}$, then the full privacy curve reduces to the boundary Poisson-shift case already covered in Sections~3--4.
\end{enumerate}
\end{corollary}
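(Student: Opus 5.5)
The plan is to compare the full hybrid experiment with a surrogate in which the dominant-block coordinate $G_n$ is drawn, given $Z_n=z$, from the \emph{same} kernel $R_{n,z}$ under both hypotheses. Define $\widetilde P_n:=\int \delta_z\otimes R_{n,z}\,P_n^J(dz)$ and $\widetilde Q_n:=\int \delta_z\otimes R_{n,z}\,Q_n^J(dz)$, i.e.\ the laws of $(\Gamma,Z)$ with $Z\sim P_n^J$ (resp.\ $Q_n^J$) and $\Gamma\mid Z=z\sim R_{n,z}$. The crucial observation is that $R_{n,z}$ depends only on $z$ (through the $\sigma(Z_n)$-measurable pair totals $T_{b,n}(z)$) and on the parameters $\theta_{b,n}$, not on which hypothesis is in force. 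Hence $(\widetilde P_n,\widetilde Q_n)$ is the image of $(P_n^J,Q_n^J)$ under a single Markov kernel $z\mapsto \delta_z\otimes R_{n,z}$.

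\textbf{Step 1: the surrogate has the projected privacy curve.} The kernel has a left inverse, namely the projection onto the $Z$-coordinate. Data processing for privacy curves, proved via the variational formula \eqref{eq:delta} by disintegrating a set $A$ into sections $A_z$ exactly as in Lemma~\ref{lem:common-independent-factor}, therefore gives inequalities in both directions, so that $\delta_{\widetilde Q_n\|\widetilde P_n}(\varepsilon)=\delta_{Q_n^J\|P_n^J}(\varepsilon)$. Concretely, $\widetilde Q_n(A)-e^\varepsilon\widetilde P_n(A)=\int R_{n,z}(A^z)\,(Q_n^J-e^\varepsilon P_n^J)(dz)$, and since $z\mapsto R_{n,z}(A^z)$ is $[0,1]$-valued, this is at most $\delta_{Q_n^J\|P_n^J}(\varepsilon)$. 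Cylinder sets give the reverse bound.

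\textbf{Step 2: TV comparison.} Both the true and the surrogate laws share the same $Z$-marginal and differ only in the conditional law of the $G$-coordinate. Hence $\mathrm{TV}(P_n,\widetilde P_n)\le\int \mathrm{TV}(P^{G\mid Z}_{n,z},R_{n,z})\,P_n^J(dz)$, with the analogous bound for $Q_n$; this follows by coupling conditionally on $z$ and applying Lemma~2.2. Lemma~\ref{lem:interior-conditional-smoothing} bounds the sum of the two integrals by $C_{\mathrm{int}}/\sqrt n$. Lemma~\ref{lem:privacy-curve-stability} then yields $|\delta_{Q_n\|P_n}-\delta_{\widetilde Q_n\|\widetilde P_n}|\le \mathrm{TV}(Q_n,\widetilde Q_n)+e^\varepsilon\mathrm{TV}(P_n,\widetilde P_n)\le(1+e^\varepsilon)C_{\mathrm{int}}/\sqrt n$. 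Combined with Step~1, this proves (i).

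\textbf{Step 3: identifying the limit.} Corollary~\ref{cor:projected-jump-lecam} gives TV convergence $P_n^J\to\mathcal L(J)$ and $Q_n^J\to\mathcal L(J+\Delta)$, and Lemma~\ref{lem:privacy-curve-stability} transfers this to the privacy curves. Since $G$ is independent of $J$ and has the same law under both limits, Lemma~\ref{lem:common-independent-factor} gives $\delta_{Q_\infty\|P_\infty}=\delta_{\mathcal L(J+\Delta)\|\mathcal L(J)}$, which closes the interior case. For (ii), I would invoke Remark~\ref{rem:B3-boundary} together with Corollary~\ref{cor:multi-curve-series}(b) and Theorem~\ref{thm:poisson-shift}. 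This is the weakest step: strictly speaking one must still show that the full experiment's curve converges at $\pi\in\{0,1\}$, where the conditional smoothing fails. I expect this to be the main obstacle. My first attempt would be to argue that the within-$D_1$ split carries information only through an $O(1)$ count and that the relevant limit is scalar after accounting for it, but I would likely present (ii) as a reduction to Sections~3--4, as the statement phrases it.
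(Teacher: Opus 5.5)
For $\pi\in(0,1)$ your Steps~1--3 are correct and follow the paper's proof. The paper uses the same surrogate pair $(\widetilde P_n,\widetilde Q_n)$ built from the hypothesis-free kernel $R_{n,z}$. It then applies two-way data processing to get $\delta_{\widetilde Q_n\|\widetilde P_n}=\delta_{Q_n^J\|P_n^J}$, combines Lemma~\ref{lem:privacy-curve-stability} with the conditional-TV integrals of Lemma~\ref{lem:interior-conditional-smoothing}, and identifies the limit via Corollary~\ref{cor:projected-jump-lecam} and Lemma~\ref{lem:common-independent-factor}. Your section-wise computation in Step~1 simply writes out the paper's ``data processing in both directions''.

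The gap is (ii), and your suspicion is justified: it cannot be closed, because the boundary claim is false in general. The paper disposes of (ii) exactly as you propose, by citing Remark~\ref{rem:B3-boundary} and Corollary~\ref{cor:multi-curve-series}(b), and that reduction does not hold up. Corollary~\ref{cor:multi-curve-series}(b) concerns the single-dominant regime, where the switched user's message lands on the fixed coordinate $y_1$. In the two-dominant regime it lands on $y_{1a}$ or $y_{1b}$ with probabilities $p_1,1-p_1$. For each finite $n$, $S_n$ is an invertible image of the histogram, since $\Pi_G+\Pi_J=I$ and the centering is common to both hypotheses, so $S_n$ sees which coordinate was hit.

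Here is a concrete counterexample. Take $\mathcal Y=D_0\cup D_1$ and $k_n=0$, with
$W_0^{(n)}(y_{0a})=p_0-1/n$, $W_0^{(n)}(y_{0b})=1-p_0$, $W_0^{(n)}(y_{1a})=1/n$, $W_0^{(n)}(y_{1b})=0$, and $W_1^{(n)}(y_{1a})=W_1^{(n)}(y_{1b})=\tfrac12$.
This lies in the setting of Proposition~\ref{prop:hybrid-gaussian-compound-poisson} with $\pi=0$.
\begin{itemize}
\item The event $\{N(y_{1b})\ge1\}$ has $P_n$-probability $0$ and $Q_n$-probability $\tfrac12$. Hence $\delta_{Q_n\|P_n}(\varepsilon)\ge\tfrac12$ for every $n$ and every $\varepsilon$.
\item On the other hand, $\Pi_J e_{y_{1a}}=\Pi_J e_{y_{1b}}=\tfrac12(e_{y_{1a}}+e_{y_{1b}})=\Pi_J\mu_1$. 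So $j_{0,y_{1a}}=\Delta\neq0$ and $J=U\Delta$ with $U\sim\mathrm{Poi}(1)$.
\item Therefore $\delta_{\mathcal L(J+\Delta)\|\mathcal L(J)}$ is the scalar Poisson-shift curve with $\lambda=1$. It is about $0.023$ at $e^\varepsilon=3$ and tends to $0$ as $\varepsilon\to\infty$.
\end{itemize}
So the headline convergence fails at $\pi=0$. The full curve, with its one-sided floor $1-p_1$, is not a Sections~3--4 Poisson-shift curve either. This is exactly the conditional-TV obstruction described in Remark~\ref{rem:B3-boundary}, and contrary to that remark it does affect the privacy curve.

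For the same reason, your suggested fix (a scalar limit after accounting for the $O(1)$ split) cannot work. With $k_n=0$, the full curve converges instead to that of the unprojected rare-count experiment: $(U_y)_{y\notin D_0}$ versus $(U_y)_{y\notin D_0}+e_\eta$, where $\mathbb P(\eta=y_{1a})=p_1$. This generally differs from $(\mathcal L(J),\mathcal L(J+\Delta))$, because $J$ records only the $D_1$-total. The statement should be restricted to $\pi\in(0,1)$. Presumably it also holds for boundary sequences in which both group sizes diverge, where your smoothing argument should still go through at a slower rate.
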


\begin{proof}
For $\pi\in(0,1)$, define binary experiments with the common conditional factor $R_{n,z}$ by
\[
\widetilde P_n(dg,dz):=P_n^J(dz)R_{n,z}(dg),
\qquad
\widetilde Q_n(dg,dz):=Q_n^J(dz)R_{n,z}(dg).
\]
Let $K((g,z),B):=\mathbf 1_B(z)$ be the projection kernel from $M\times M^\perp$ onto $M^\perp$, and let
\[
L(z,A):=\int \mathbf 1_A(g,z)\,R_{n,z}(dg)
\]
be the enrichment kernel from $M^\perp$ back to $M\times M^\perp$. By construction,
\[
\widetilde P_n K=P_n^J,\qquad \widetilde Q_n K=Q_n^J,\qquad P_n^J L=\widetilde P_n,\qquad Q_n^J L=\widetilde Q_n.
\]
Hence data processing in the two directions gives
\[
\delta_{\widetilde Q_n\|\widetilde P_n}(\varepsilon)=\delta_{Q_n^J\|P_n^J}(\varepsilon).
\]
By Lemma~\ref{lem:privacy-curve-stability},
\[
\bigl|\delta_{Q_n\|P_n}(\varepsilon)-\delta_{\widetilde Q_n\|\widetilde P_n}(\varepsilon)\bigr|
\le \mathrm{TV}(Q_n,\widetilde Q_n)+e^\varepsilon\mathrm{TV}(P_n,\widetilde P_n).
\]
The two TV terms are exactly the two integrals in \eqref{eq:B3-main}, so Lemma~\ref{lem:interior-conditional-smoothing} yields
\[
\bigl|\delta_{Q_n\|P_n}(\varepsilon)-\delta_{Q_n^J\|P_n^J}(\varepsilon)\bigr|
\le \frac{C_{\mathrm{int}}(1+e^\varepsilon)}{\sqrt n}.
\]
Now Corollary~\ref{cor:projected-jump-lecam} gives $P_n^J\to P_\infty^J$ and $Q_n^J\to Q_\infty^J$ in total variation, where
\[
P_\infty^J=\mathcal L(J),
\qquad
Q_\infty^J=\mathcal L(J+\Delta).
\]
Applying Lemma~\ref{lem:privacy-curve-stability} to the projected experiment shows
\[
\delta_{Q_n^J\|P_n^J}(\varepsilon)\to \delta_{Q_\infty^J\|P_\infty^J}(\varepsilon).
\]
Finally, Lemma~\ref{lem:common-independent-factor} removes the common Gaussian factor in the limit:
\[
\delta_{Q_\infty\|P_\infty}(\varepsilon)=\delta_{\mathcal L(G,J+\Delta)\|\mathcal L(G,J)}(\varepsilon)
=\delta_{\mathcal L(J+\Delta)\|\mathcal L(J)}(\varepsilon).
\]
This proves the interior case.

If $\pi\in\{0,1\}$, Remark~\ref{rem:B3-boundary} identifies the boundary privacy problem with the scalar Poisson-shift setting already treated in Theorem~\ref{thm:poisson-shift}; equivalently, in the binary case it is the boundary specialization of Theorem~\ref{thm:skellam-shift}. Hence the same convergence conclusion holds at the boundary as well.
\end{proof}


\begin{thebibliography}{99}

\bibitem{Shv26} A.~Shvets. Universal Shuffle Asymptotics, Part I: Sharp Privacy Analysis in the Gaussian Regime. \textit{arXiv:2602.09029}, 2026.

\bibitem{CGS11} L.~H.~Y.~Chen, L.~Goldstein, and Q.-M.~Shao. \textit{Normal Approximation by Stein's Method}. Springer, 2011.

\bibitem{LeC86} L.~Le~Cam. \textit{Asymptotic Methods in Statistical Decision Theory}. Springer, 1986.

\bibitem{BHJ92} A.~D.~Barbour, L.~Holst, and S.~Janson. \textit{Poisson Approximation}. Oxford University Press, 1992.

\bibitem{DRS22} J.~Dong, A.~Roth, and W.~J.~Su. Gaussian differential privacy. \textit{Journal of the Royal Statistical Society: Series B}, 84(1):3--37, 2022.

\bibitem{FMT21} V.~Feldman, A.~McMillan, and K.~Talwar. Hiding among the clones: A simple and nearly optimal analysis of privacy amplification by shuffling. In \textit{FOCS}, 2021.

\bibitem{EFM19} U.~Erlingsson, V.~Feldman, I.~Mironov, A.~Raghunathan, K.~Talwar, and A.~Thakurta. Amplification by shuffling: From local to central differential privacy via anonymity. In \textit{SODA}, 2019.

\bibitem{CSU19} A.~Cheu, A.~Smith, J.~Ullman, D.~Zeber, and M.~Zhilyaev. Distributed differential privacy via shuffling. In \textit{EUROCRYPT}, 2019.

\bibitem{BBG19} B.~Balle, J.~Bell, A.~Gascon, and K.~Nissim. The privacy blanket of the shuffle model. In \textit{CRYPTO}, 2019.

\bibitem{GKL21} A.~Girgis, N.~Kairouz, Z.~Liu, T.~Steinke, and K.~Talwar. On the R\'{e}nyi differential privacy of the shuffle model. In \textit{ACM CCS}, 2021.

\bibitem{Mir17} I.~Mironov. R\'{e}nyi differential privacy. In \textit{CSF}, 2017.

\bibitem{Pet75} V.~V.~Petrov. \textit{Sums of Independent Random Variables}. Springer, 1975.

\bibitem{vdV98} A.~W.~van der~Vaart. \textit{Asymptotic Statistics}. Cambridge University Press, 1998.

\bibitem{Rol07} A.~R\"{o}llin. Translated Poisson approximation using exchangeable pair couplings. \textit{Annals of Applied Probability}, 17(5/6):1596--1614, 2007.

\bibitem{TL26} S.~Takagi and S.~P.~Liew. Analysis of shuffling beyond pure local differential privacy. \textit{arXiv:2601.19154}, 27~January~2026.

\end{thebibliography}
\end{document}